\newtheorem{lemma}{Lemma}
\newtheorem{definition}{Definition}
\newtheorem{corollary}{Corollary}
\newtheorem{theorem}{Theorem}
\newtheorem{proposition}{Proposition}
\newtheorem{remark}{Remark}
\def\real{{\mathord{{\rm I\kern-2.8pt R}}}}        % Fake blackboard bold R.
\def\inte{{\mathord{{\rm I\kern-2.8pt N}}}}
\def\sZZ{{\rm Z\kern-2.8ptem{}Z}}
\def\z{{\mathchoice
  {\sZZ}
  {\sZZ}
  {\rm Z\kern-0.30em{}Z}
  {\rm Z\kern-0.25em{}Z} }}
\def\sQQ{{\kern 0.27em \vrule height1.45ex width0.03em depth0em
          \kern-0.30em \rm Q}}
\def\qu{{\mathchoice
    {\sQQ}
    {\sQQ}
  {\kern 0.225em \vrule height1.05ex width0.025em depth0em \kern-0.25em \rm Q}
  {\kern 0.180em \vrule height0.78ex width0.020em depth0em \kern-0.20em \rm Q}
        }}
\def\sCC{{\kern 0.27em \vrule height1.45ex width0.03em depth0em
          \kern-0.30em \rm C}}
\def\complex{{\mathchoice
    {\sCC}
    {\sCC}
  {\kern 0.225em \vrule height1.05ex width0.025em depth0em \kern-0.25em \rm C}
  {\kern 0.180em \vrule height0.78ex width0.020em depth0em \kern-0.20em \rm C}
        }}
\newcommand{\ba}{\begin{array}}
\newcommand{\ea}{\end{array}}
\newcommand{\be}{\begin{equation}}
\newcommand{\ee}{\end{equation}}
\newcommand{\bea}{\begin{eqnarray}}
\newcommand{\eea}{\end{eqnarray}}
\newcommand{\beaa}{\begin{eqnarray*}}
\newcommand{\eeaa}{\end{eqnarray*}}
\def\z{\zeta}
\font\tenmath=msbm10 \font\sevenmath=msbm7 \font\fivemath=msbm5
\def \={{\buildrel {\rm (law)} \over =}}
\def\qed{ \hfill \vrule width.25cm height.25cm depth0cm\smallskip}
\newcommand{\basa}{\begin{assumption}}
\newcommand{\easa}{\end{assumption}}
\newcommand{\bas}{\begin{assum}}
\newcommand{\eas}{\end{assum}}
\newcommand{\ignore}[1]{}
\begin{document}

\renewcommand{\thefootnote}{\fnsymbol{footnote}}

\renewcommand{\thefootnote}{\fnsymbol{footnote}}

\title{Sobolev-Wigner spaces}

\author[1]{Charles-Philippe Diez \thanks{charles-philippe.diez@univ-lille.fr}}
\affil[1]{Department of Statistics, The Chinese University of Hong-Kong.}

\renewcommand\Authands{ and }

\maketitle

\begin{abstract}
In this paper, we provide new results about the free Malliavin calculus on the Wigner space first developed in the breakthrough work of Biane and Speicher \cite{BS}. We define in this way the higher-order Malliavin derivatives, and we study their associated {\it Sobolev-Wigner} spaces. Using these definitions, we are able to obtain a free counterpart of the {\it Stroock's formula} and various variances identities. As a consequence, we obtain a sophisticated proof {\it a la \"Ust\"unel, Nourdin and Peccati} (\cite{Us,NP-book}) of the product formula between two multiple Wigner integrals. We also study the {\it commutation relations} (of different significations) on the Wigner space, and we show for example the absence of non-trivial bounded {\it central} Malliavin differentiable functionals and the absence of non-trivial Malliavin differentiable projections.
\end{abstract}

\vskip0.3cm

{\bf 2010 AMS Classification Numbers:}   46L54, 60H07, 60H30.

\vskip0.3cm

{\bf Key Words and Phrases}: Free probability, Wigner chaos, free Malliavin calculus, Sobolev-Wigner spaces.

%%%%%%%%%%%%%%%%%%%%%%%%%%%%%%%%%%%%%%%%%%%%%%
%% Please use \tableofcontents for articles %%
%% with 50 pages and more                   %%
%%%%%%%%%%%%%%%%%%%%%%%%%%%%%%%%%%%%%%%%%%%%%%
%\tableofcontents
\section{Introduction}
Free probability, and in particular the concept of freeness which is modelled on free products instead of tensors products of algebras, and which can be seen as a free analog of the classical (tensor) independence was invented by Dan Virgil Voiculescu during the last 80s to have a deeper understanding of $\Pi_1$ factors and especially the free groups factors. Voiculescu in his breakthrough work has shown its powerful applications in the study of von Neumman algebras and the connection with random matrix theory. Indeed, it has allowed several authors to prove numerous results for von Neumann algebras, especially for the free groups factors $L(\mathbb{F}_n),1\leq n\leq \infty$ such as the absence of the property Gamma, absence of Cartan subalgebras or primeness \cite{Voic1,Voic2,Ge} which were unknown until the appearance of this theory. Voiculescu also discovered an important connection with Gaussian random matrices: the large $N\times N$ limit of Gaussian matrices behave as a semicircular system, which has motivated of a lot of parallelism between free probability and random matrix theory.
Then, two decades ago, in the breakthrough paper of Biane and Speicher \cite{BS}, the first results about the {\it free stochastic calculus of variations} on Semicircular spaces appeared, which is the free counterpart of the celebrated {\it stochastic calculus of variations} onto Gaussian spaces (first developed onto the classical Wigner space) and invented by Malliavin during the $70$'s. Indeed, it was defined in their paper a free analogue of the usual (commutative or bosonic) Brownian motion, which is called the {\it free Brownian motion}. Many results of free stochastic analysis were proved such as Ito integration for biprocesses, functional Ito formula for a subspace of smooth  {\it operator-valued Lipschitz} functions, a $L^2$-decomposition of the Wigner Space, which is exactly the noncommutative analogue of the Wiener chaotic decomposition (identified with bosonic Fock spaces), as well as a free counterpart of the classical Malliavin operators. Several important results on the Wiener space were proved to also hold true in the free context: e.g. a free Clark-Ocone-Bismut formula, the {\it hypercontractivity} of the free Ornstein-Uhlenbeck semigroup by Biane in \cite{Biane}, a free Skorohod integration, and also recently several regularity results about of the analytic distribution of Wigner functionals (c.f Mai \cite{Mai}). By considering analogies between the classical and free case, a lot of progress had been made concerning the "{\it fourth moment theorems}" on the Wigner space were proved (see e.g. the main contribution of Kemp et al. \cite{KNPS}, Nourdin and Peccati \cite{np-p} (free Poisson approximation), Bourguin and Campese \cite{BC}, Cebron \cite{C}, or more recently the author in \cite{Di} by means of free Malliavin calculus (the reader interested in the free Stein's method can consult the constantly updated webpage maintained by Nourdin \url{https://sites.google.com/site/malliavinstein/home} for further results ($q$-Gaussian approximation: Deya, Norredine, Nourdin in \cite{q-gauss}, tetilla law: Deya, Nourdin in \cite{Deya}, multidimensional free Poisson Bourguin in \cite{bourg}, invariances principles for homogeneous sums: Deya, Nourdin in \cite{inv}...).
\bigbreak
Our purpose here is to refine several results about the free Malliavin calculus on the Wigner space. We first recall some details which can be found in the main contribution to this topic by Biane and Speicher \cite{BS}. We then introduce the main operators of this infinite dimensional differential calculus, and we prove several results about them: e.g free Ornstein-Uhlenbeck operator as the {\it directional derivative} with respect to a scale parameter. In a second time, we study the {\it commutation relation} (with different meanings) such as: the commutation relation between the free Malliavin derivative and the free Ornstein-Uhlenbeck semigroup, as well as commutation relations between Malliavin derivative and conditional expectation, and a kind of infinite dimensional dual system in the sense of Voiculescu: the commutator between a smooth Wigner functionals and the {\it left annihilation operator} (which might be of independent interest, as it could help, combined with findings of Charlesworth and Shlyakhtenko in \cite{CS} and Mai, Speicher and Weber in \cite{MSW}, to prove the free analog of the Shigekawa's result \cite{shige} about the absolute continuity of the analytic distribution of multiple Wigner integrals). In the last part of this section we prove the absence of central (in the sense of von Neumann algebra) Wigner functionals in $\mathbb{D}^{1,2}$, which gives another proof that the von Neumann algebra generated by a free Brownian motion (known to be isomorphic to $L(\mathbb{F}_{\infty})$, the main idea being to don't use this fact) is a factor as a consequence of the free Poincaré inequality on the Wigner space, by using ideas which have first appeared in the work of Dabrowski in \cite{DAB}, in the finitely generated case. In a third part, we define higher-order free Malliavin derivatives of Wigner functionals. In this way, we then consider their associated {\it Sobolev-Wigner} (or semicircular) spaces, which are thus the free counterpart of the Gaussian ones (usually called {\it Sobolev-Watanabe} spaces). We then provide several results about their chaotic characterization, and as a main consequence we prove a {\it free Stroock formula}. Finally, we are able to give a more sophisticated proof of the multiplication rule on the Wigner space which is more in spirit with the free Malliavin calculus. Indeed, this result can be seen as a free counterpart of a now well known proof of the product formula on the classical Wiener space, which has first appeared in the book of \"Ust\"unel \cite{ust} and in the book of Nourdin and Peccati \cite{NP-book}. In fact, it is a consequence of a {\it Leibniz rule} for the free Malliavin gradient. This kind of Leibniz rule turns out to first explicitly appear (in the finitely generated case) in the work of Voiculescu for the free difference quotients (see the discussion preceding proposition 4.5 in \cite{V}).

\section{Preliminaries}
\begin{flushleft}
In this section, we recall basic definitions about noncommutative $L^p$-spaces.
\newline
Here $\mathcal{M}$ denotes a von Neumann algebra, equipped with a faithful normal state.
\newline
Now by the GNS construction, $\tau$ defines an inner product on $\mathcal{M}$ by setting for all $x,y \in \mathcal{M}$.
\end{flushleft}
\begin{equation}
    \langle x,y\rangle_{\tau}=\tau (y^*x)\nonumber
\end{equation}
\begin{flushleft}
The completion of $\mathcal{M}$ with respect to the induced norm
$\lVert.\rVert_{\tau}$ is denoted $L^2(\mathcal{M},\tau)$. We will omit to denote the state when its clearly defined and denote $\lVert.\rVert_{\tau}$  as $\lVert.\rVert_{2}$ and $L^2(\mathcal{M},\tau)$ as $L^2(\mathcal{M})$. We can also define in the same way the spaces $L^p(\mathcal{M},\tau)$ for $1\leq p\leq\infty $ by taking the completion with respect to the norm :
\end{flushleft}
\begin{equation}
    \lVert x\rVert_p=\tau(\lvert x\rvert^p)^{\frac{1}{p}}
\end{equation}
where $\lvert x\rvert=(x^*x)^{\frac{1}{2}}$
and $L^{\infty}(\mathcal{M},\tau):=\mathcal{M}$ equipped with the operator norm $\rVert .\lVert$.
\begin{flushleft}
From the von Neumann tensor product $\mathcal{M}\otimes \mathcal{M}^{op}$ (we denote here to avoid confusion, the algebraic tensor product as $\odot$) equipped with operator norm : $\lVert .\rVert_{\mathcal{M}\otimes \mathcal{M}^{op}}$, and the faithful normal state $\tau\otimes \tau^{op}$.
We can consider the Hilbert space $L^2(\mathcal{M}\otimes \mathcal{M}^{op},\tau\otimes \tau^{op})$. 
which can be identified with $HS(L^2(\mathcal{M}))$ which is the space of Hilbert–Schmidt operators on $L^2(M)$ via the following map:
\end{flushleft}
\begin{equation}\label{HS}
    x\otimes y \mapsto \langle y,.\rangle_2 x,\quad x,y\in \mathcal{M}\nonumber.
\end{equation}

\section{Wigner-Ito chaoses}
In this section, we will describe the fundamental concepts of the Wigner space which is the free analog of the classical Wiener space: the notion of multiple Wigner-Ito integrals, the chaotic decomposition of $L^2$-functionals, or the product rule between two multiple Wigner integrals which is valid as we have an  $L^{\infty}$-norm estimate of these multiple Wigner integrals, and which ensures that such operators are bounded.

We first recall (and also because some operators will be needed further in the paper) how to construct a free Brownian motion via the free Fock space which will turns out to be $*$-unitarily isomorphic to the completion of the von Neumann algebra generated by the free Brownian motion with respect to the associated
$L^2$-norm.
\begin{definition}
Let $\mathcal{H}_{\mathbb{R}}$ being a real separable Hilbert space and $\mathcal{H}_{\mathbb{C}}=\mathcal{H}_{\mathbb{R}}\otimes_{\mathbb{R}} \mathbb{C}$ its complexification.
\newline
The free (or full) Fock space  is defined as the completion of
\begin{equation*}
F(\mathcal{H}_{\mathbb{C}})= \mathbb{C}\Omega\oplus\bigoplus_{i=1}^{\infty}\mathcal{H}_{\mathbb{C}}^{\otimes {n}}
, \end{equation*}
where $\Omega$ is the "{\it vaccum vector}" (of norm $1$), and $^{\odot n}$ is the algebraic tensor product (without completion).
\newline
with respect to the following the inner product (we explicitly omit to denote the underlying Hilbert space  $\mathcal{H}_{\mathbb{C}}$):
\begin{equation}
\langle g_{1} \otimes ...\otimes g_{n},h_{1} \otimes ... \otimes h_{m} \rangle=\delta_{n,m}\langle g_1,h_{1}\rangle...\langle g_n,h_{n}\rangle\nonumber
, \end{equation}
\end{definition}
\begin{definition}
We define the following operator in $\mathcal{B}(F(\mathcal{H}_{\mathbb{C}}))$ :
\begin{enumerate}
    \item 
For all $h\in \mathcal{H}_{\mathbb{C}}$ the {\it left creation} operator   $l(h) \in \mathcal{B}(F(\mathcal{H}_{\mathbb{C}}))$ by :
\begin{equation}
    l(h)(g_1\otimes\ldots\otimes g_n)=h\otimes g_1\otimes \ldots\otimes g_n\nonumber
, \end{equation}
\item
For all $h\in \mathcal{H}{\mathbb{C}}$ the left annihilation ({\it left annihilation}) $l(h) \in \mathcal{B}(F(\mathcal{H}_{\mathbb{C}}))$\begin{eqnarray}
l^*(h)\Omega&=&0\nonumber\\
    l^*(h)(g_1\otimes\ldots\otimes g_n)&=&\langle h,g_1\rangle g_2\otimes\ldots\otimes  g_n\nonumber
, \end{eqnarray}
\end{enumerate}
\end{definition}
\begin{definition}
We define the following operator in $\mathcal{B}(F(\mathcal{H}_{\mathbb{C}}))$ :
\begin{enumerate}
    \item 
For all $h\in \mathcal{H}_{\mathbb{C}}$ the {\it right creation} operator   $r(h) \in \mathcal{B}(F(\mathcal{H}_{\mathbb{C}}))$ by :
\begin{equation}
    r(h)(g_1\otimes\ldots\otimes g_n)= g_1\otimes \ldots\otimes g_n\otimes h\nonumber
, \end{equation}
\item
For all $h\in \mathcal{H}_{\mathbb{C}}$ the right annihilation  $r^*(h) \in \mathcal{B}(F(\mathcal{H}_{\mathbb{C}}))$\begin{eqnarray}
r^*(h)\Omega&=&0\nonumber\\
    r^*(h)(g_1\otimes\ldots\otimes g_n)&=&\langle h,g_n\rangle g_1\otimes \ldots\otimes g_{n-1} \nonumber
, \end{eqnarray}
\end{enumerate}
\end{definition}
\begin{definition}
We define the following operator in $\mathcal{B}(F(\mathcal{H}_{\mathbb{C}}))$ :
For all $h\in \mathcal{H}_{\mathbb{C}}$ the {\it semicircular operator}: 
\begin{equation}
S(h):=l(h)+l^*(h) \in \mathcal{B}(F(\mathcal{H}_{\mathbb{C}}))\nonumber
\end{equation}
\end{definition}
\begin{definition}
We also let 
\begin{eqnarray}
\mathcal{S}_{alg}(\mathcal{H}_{\mathbb{R}})=*-alg\left\{S(h),h\in\mathcal{H}_{\mathbb{R}}\right\}\nonumber
, \end{eqnarray}
being the $*$-unital algebra generated by the {\it real} semicircular elements
\end{definition}

\begin{theorem}(Voiculescu \cite{V})
The von Neumann algebra generated by the real field operators (semicirculars) is isomorphic to the free group factor (with numbers of generators depending on the dimension of $\mathcal{H}_{\mathbb{R}}$).
\begin{equation}
    \mathcal{SC}(\mathcal{H}_{\mathbb{R}})=\left\{S(h),h\in\mathcal{H}_{\mathbb{R}}\right\}''\simeq L(\mathbb{F}_{dim(\mathcal{H}_{\mathbb{R}})})\nonumber
\end{equation}
\end{theorem}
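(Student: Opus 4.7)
The plan is to identify both von Neumann algebras with the same iterated free product of diffuse abelian tracial von Neumann algebras, using only (i) freeness of a semicircular system attached to an orthonormal basis, (ii) the classification of diffuse abelian von Neumann algebras up to trace-preserving isomorphism, and (iii) functoriality of the (reduced) von Neumann free product.

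First I would fix an orthonormal basis $\{h_i\}_{i\in I}$ of $\mathcal{H}_{\mathbb{R}}$, with $|I|=\dim(\mathcal{H}_{\mathbb{R}})$, and consider the unital $*$-subalgebras $A_i := W^*(S(h_i))\subset \mathcal B(F(\mathcal{H}_{\mathbb{C}}))$. The first step is to show that $(A_i)_{i\in I}$ is a free family with respect to the vacuum state $\tau(\cdot)=\langle\cdot\,\Omega,\Omega\rangle$. This is the computation at the heart of the whole theory: using that $F(\mathcal{H}_{\mathbb{C}})$ decomposes naturally along tensor words built from the $h_i$'s and that $l(h_i),l^*(h_i)$ only act on the leftmost letter, one shows inductively that any alternating product $p_1(S(h_{i_1}))\cdots p_k(S(h_{i_k}))$ with $i_1\neq i_2\neq\cdots\neq i_k$ and $\tau(p_j(S(h_{i_j})))=0$ has vanishing vacuum expectation. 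Equivalently, one observes that $F(\mathcal{H}_{\mathbb{C}})$ with the decomposition along $\{h_i\}$ is exactly the Voiculescu free product of the Fock spaces $F(\mathbb{C}h_i)$, and creation/annihilation operators restricted to different summands are free by construction.

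Next I would identify each $A_i$. Since $S(h_i)$ is self-adjoint with continuous spectral distribution (the standard semicircle law on $[-2,2]$), the spectral theorem gives
\begin{equation*}
(A_i,\tau|_{A_i})\;\cong\;\bigl(L^\infty([-2,2]),\tfrac{1}{2\pi}\sqrt{4-x^2}\,dx\bigr),
\end{equation*}
i.e.\ $A_i$ is a diffuse abelian tracial von Neumann algebra. Combining the freeness of the $A_i$ with the density of $\mathcal{S}_{alg}(\mathcal{H}_{\mathbb{R}})\cdot\Omega$ in $F(\mathcal{H}_{\mathbb{C}})$ and the fact that $\Omega$ is cyclic and separating for $\mathcal{SC}(\mathcal{H}_{\mathbb{R}})$, one concludes that $\mathcal{SC}(\mathcal{H}_{\mathbb{R}})$ is (canonically isomorphic to) the reduced von Neumann free product
\begin{equation*}
\mathcal{SC}(\mathcal{H}_{\mathbb{R}})\;\cong\;\mathop{\text{\LARGE$\ast$}}_{i\in I}\bigl(A_i,\tau|_{A_i}\bigr).
\end{equation*}

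On the other side, $\mathbb{F}_{|I|}=\ast_{i\in I}\mathbb{Z}$ as a discrete group, so functoriality of $L(\cdot)$ under free products of groups (which at the von Neumann algebraic level is the Voiculescu free product with respect to the canonical traces) gives
\begin{equation*}
L(\mathbb{F}_{|I|})\;\cong\;\mathop{\text{\LARGE$\ast$}}_{i\in I}L(\mathbb{Z})\;\cong\;\mathop{\text{\LARGE$\ast$}}_{i\in I}\bigl(L^\infty(\mathbb{T}),\tfrac{d\theta}{2\pi}\bigr).
\end{equation*}
The final step is to match the two free products factor by factor. Both $L^\infty([-2,2],\text{semicircle})$ and $L^\infty(\mathbb{T},d\theta/2\pi)$ are diffuse abelian von Neumann algebras with separable predual and a faithful normal tracial state, hence are trace-preservingly isomorphic to $(L^\infty([0,1]),dx)$; this is a standard measure-theoretic fact. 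Since the reduced free product of tracial von Neumann algebras only depends (up to trace-preserving isomorphism) on the isomorphism classes of the factors, we obtain the desired isomorphism $\mathcal{SC}(\mathcal{H}_{\mathbb{R}})\cong L(\mathbb{F}_{\dim(\mathcal{H}_{\mathbb{R}})})$.

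The main obstacle is the freeness step: reducing the combinatorics on $F(\mathcal{H}_{\mathbb{C}})$ to the free product decomposition cleanly, and then promoting freeness of the $*$-algebras generated by $S(h_i)$ to freeness of the generated von Neumann algebras $A_i$ (which requires a normality/weak-$*$ density argument, ultimately relying on the vacuum state being tracial on $\mathcal{SC}(\mathcal{H}_{\mathbb{R}})$, a fact that itself has to be verified via Wick/non-crossing pair partition calculations).
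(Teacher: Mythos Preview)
The paper does not give its own proof of this theorem; it is stated as a result of Voiculescu with a citation and no argument. Your proposal is the standard and correct proof, essentially Voiculescu's original argument: exhibit an orthonormal basis, verify freeness of the associated semicircular family via the Fock space free-product structure, identify each $W^*(S(h_i))$ as a diffuse abelian tracial von Neumann algebra, do the same for $L(\mathbb{F}_{|I|}) \cong \ast_{i\in I} L(\mathbb{Z})$, and match the two free products using the uniqueness (up to trace-preserving isomorphism) of the separable diffuse abelian tracial von Neumann algebra together with functoriality of the reduced free product. There is nothing to compare against in the paper itself.
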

And we trivially have the following inclusion 
\begin{eqnarray}
\mathcal{S}_{alg}(\mathcal{H}_{\mathbb{R}})\subset \mathcal{SC}(\mathcal{H}_{\mathbb{R}}) \subset \mathcal{B}(F(\mathcal{H}_{\mathbb{C}}))\nonumber
, \end{eqnarray}
Note that $\Omega$ is a cyclic and separating vector on $\mathcal{SC}(\mathcal{H}_{\mathbb{R}})$, i.e $\overline{\mathcal{SC}(\mathcal{H}_{\mathbb{R}})\Omega}=F(\mathcal{H}_{\mathbb{C}})$, and if $X\in \mathcal{SC}(\mathcal{H}_{\mathbb{R}})$ is such that $ X\Omega=0$, then $X=0$.
\bigbreak
We also denote the vaccum state: $\tau(\:.\:)=\langle .\:\Omega,\Omega\rangle_{F(\mathcal{H}_{\mathbb{C}})}$, which is tracial $\tau(XY)=\tau(YX)$ for $X,Y\in \mathcal{SC}(\mathcal{H}_{\mathbb{R}})$, and we recall that $S(h)$ with $ h\in \mathcal{H}_{\mathbb{R}}$ have semicircle distribution with respect to $\tau$.

\begin{proposition}(Second quantization)
Let $T: \mathcal{H}_{\mathbb{R}}\rightarrow \mathcal{K}_{\mathbb{R}}$ a contraction between real separable Hilbert spaces with complexification $T_{\mathbb{C}}$, then the linear map
\begin{equation}
\mathcal{F}(T)(f_1\otimes \ldots\otimes f_n)=(Tf_1)\otimes \ldots\otimes (Tf_n)\nonumber
\end{equation}
extends to a contraction between $F(\mathcal{H}_{\mathbb{C}})$ to $F(\mathcal{K}_{\mathbb{C}})$.
\end{proposition}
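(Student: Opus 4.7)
The plan is to define $\mathcal{F}(T)$ level by level on the orthogonal grading of the full Fock space, verify it is a contraction on each graded piece, and then assemble these pieces into a single contraction on the whole Fock space.

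First I would set $\mathcal{F}(T)\Omega = \Omega$ on the vacuum line $\mathbb{C}\Omega$ (which is obviously isometric), and on each $n$-fold tensor layer $\mathcal{H}_{\mathbb{C}}^{\otimes n}$ I would define $\mathcal{F}(T)_n := T_{\mathbb{C}}^{\otimes n}$, the $n$-fold Hilbert tensor power of $T_{\mathbb{C}}$, obtained by extending the simple-tensor rule $f_1 \otimes \cdots \otimes f_n \mapsto (T_{\mathbb{C}}f_1) \otimes \cdots \otimes (T_{\mathbb{C}}f_n)$ by linearity. A preliminary observation is that $\|T_{\mathbb{C}}\|_{\mathcal{B}(\mathcal{H}_{\mathbb{C}},\mathcal{K}_{\mathbb{C}})} = \|T\|_{\mathcal{B}(\mathcal{H}_{\mathbb{R}},\mathcal{K}_{\mathbb{R}})} \leq 1$: writing $\xi = u + iv$ with $u,v \in \mathcal{H}_{\mathbb{R}}$, one has $\|T_{\mathbb{C}}\xi\|^2 = \|Tu\|^2 + \|Tv\|^2 \leq \|u\|^2 + \|v\|^2 = \|\xi\|^2$.

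Next I would invoke the standard Hilbert space fact that the tensor product of contractions is a contraction, i.e. $\|A \otimes B\|_{\mathcal{B}(\mathcal{H}_1 \otimes \mathcal{H}_2, \mathcal{K}_1 \otimes \mathcal{K}_2)} \leq \|A\|\cdot\|B\|$, iterating to get $\|T_{\mathbb{C}}^{\otimes n}\| \leq \|T_{\mathbb{C}}\|^n \leq 1$. The quickest way to see this is to use the cross-norm inequality on elementary tensors together with the fact that $T_{\mathbb{C}}^{\otimes n}$ intertwines the inner product structure: for simple tensors one computes
\begin{equation*}
\langle T_{\mathbb{C}}^{\otimes n}(f_1 \otimes \cdots \otimes f_n), T_{\mathbb{C}}^{\otimes n}(g_1 \otimes \cdots \otimes g_n) \rangle = \prod_{i=1}^n \langle T_{\mathbb{C}}^*T_{\mathbb{C}} f_i, g_i\rangle,
\end{equation*}
and since $T_{\mathbb{C}}^*T_{\mathbb{C}} \leq \mathrm{Id}$ as positive operators (because $T_{\mathbb{C}}$ is a contraction) one concludes contractivity on the algebraic tensor product, which then extends to the Hilbert completion.

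Finally I would assemble: since $F(\mathcal{H}_{\mathbb{C}}) = \mathbb{C}\Omega \oplus \bigoplus_{n\geq 1} \mathcal{H}_{\mathbb{C}}^{\otimes n}$ is an orthogonal Hilbert direct sum, the operator $\mathcal{F}(T) := \mathrm{id}_{\mathbb{C}\Omega} \oplus \bigoplus_{n\geq 1} \mathcal{F}(T)_n$ is well defined on the algebraic direct sum, and for any $\xi = \xi_0 \oplus \sum_n \xi_n$ in the algebraic direct sum one has
\begin{equation*}
\|\mathcal{F}(T)\xi\|^2 = |\xi_0|^2 + \sum_{n\geq 1} \|\mathcal{F}(T)_n \xi_n\|^2 \leq |\xi_0|^2 + \sum_{n\geq 1}\|\xi_n\|^2 = \|\xi\|^2,
\end{equation*}
so $\mathcal{F}(T)$ extends by density to a contraction $F(\mathcal{H}_{\mathbb{C}}) \to F(\mathcal{K}_{\mathbb{C}})$. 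I don't expect a real obstacle here; the only mildly delicate step is the contractivity of the tensor power $T_{\mathbb{C}}^{\otimes n}$ on the Hilbert tensor product, which is where one must be careful to pass from elementary tensors to the full Hilbert completion via a positivity/cross-norm argument rather than a naive triangle inequality.
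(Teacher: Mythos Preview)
Your argument is correct and follows the standard route: define $\mathcal{F}(T)$ as the direct sum of the tensor powers $T_{\mathbb{C}}^{\otimes n}$, check each is a contraction via $T_{\mathbb{C}}^*T_{\mathbb{C}}\leq \mathrm{Id}$, and use orthogonality of the Fock grading to assemble. The paper itself states this proposition without proof, treating it as a well-known fact about second quantization on full Fock space, so there is nothing to compare against; your write-up supplies exactly the expected standard justification.
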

\begin{flushleft}
It turns out that the construction of Wigner chaoses could be done efficiently by using an isomorphism between the free Fock space and the Wigner space: such a construction is usually done via the Wick map or multiple  Wigner-Itô integral $I^S(f)$ generally denoted $I(f)$ when the semicircular process $S$ is fixed. We refer to the article \cite{BS} for a complete exposure.
\end{flushleft}

\begin{definition}
We define the finite Wigner chaos of order $n$ denoted $\mathcal{P}_n$ as the (sub) Hilbert space in $L^2(\mathcal{\mathcal{SC}(\mathcal{H}_{\mathbb{R}}}))$ generated by 
\begin{equation}
\left\{S(h_1)\ldots S(h_k),h_1,\ldots,h_k\in \mathcal{H}_{\mathbb{R}}, 1\leq k\leq n\right\}\cup\left\{1\right\}
\end{equation}
We also define the homogeneous Wigner chaos of order $n$ by:
\begin{equation}
    \mathcal{H}_n:=\mathcal{P}_n \ominus \mathcal{P}_{n-1}
\end{equation}
and we obviously have:
\begin{equation}
\mathcal{P}_n=\bigoplus_{k=0}^{n}\mathcal{H}_k,
\end{equation}
where the direct sum is understand in $L^2$.
\end{definition}
In fact, the projection on tensors of length "$n$" denoted $\pi_n$ from 
\newline
$\mathcal{H}_n$ to $L^2(\mathcal{A},\tau)$:
\begin{equation}
    I_n : h_1\otimes...\otimes h_n \mapsto \pi_n(S(h_1)...S(h_n)),
\end{equation}
can be extended to a linear isometry between $\mathcal{H}_{\mathbb{C}}^{\otimes n}$ to $\mathcal{H}_n$.
This allows to introduce the fundamental theorem which asserts that the free Fock space is unitarily isomorphic to the $L^2$-space generated by the isonormal semicircular process.
\begin{theorem}(Biane, Speicher proposition 5.3.2 in \cite{BS})
    We have the following Wigner-Ito chaotic decomposition:
    \begin{equation}
        L^2(\mathcal{SC}(\mathcal{H}_{\mathbb{R}}))=\bigoplus_{k=0}^{\infty} \mathcal{H}_k\simeq F(\mathcal{H}_{\mathbb{C}})\nonumber
    \end{equation}
\end{theorem}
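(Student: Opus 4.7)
The plan is to construct a single unitary $U: L^2(\mathcal{SC}(\mathcal{H}_{\mathbb{R}})) \to F(\mathcal{H}_{\mathbb{C}})$ and then show that it intertwines the Wigner chaos grading on the left with the tensor-length grading on the right. The natural candidate is the GNS map $U: x \mapsto x\Omega$. Because $\Omega$ is separating for $\mathcal{SC}(\mathcal{H}_{\mathbb{R}})$, $U$ is injective, and because $\Omega$ is cyclic it has dense range. Traciality of $\tau$ (which is the vacuum state $\langle\,\cdot\,\Omega,\Omega\rangle$) gives
\begin{equation*}
\|U(x)\|_{F(\mathcal{H}_{\mathbb{C}})}^2 = \langle x\Omega, x\Omega\rangle = \tau(x^*x) = \|x\|_2^2,
\end{equation*}
so $U$ extends to a unitary between Hilbert spaces. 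This already yields the right-hand isomorphism $L^2(\mathcal{SC}(\mathcal{H}_{\mathbb{R}})) \simeq F(\mathcal{H}_{\mathbb{C}})$; what remains is to identify the grading.

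The key computation is the action of a product of field operators on the vacuum. Writing $S(h)=l(h)+l^*(h)$ and expanding, each term in $S(h_1)\cdots S(h_n)\Omega$ is an alternating word in $l$'s and $l^*$'s applied to $\Omega$. Since $l^*(h)\Omega=0$, the only word that never lowers the tensor length is the pure creation word, which produces $h_1\otimes\cdots\otimes h_n$. Every other word contains at least one annihilation, each of which decreases length by one while creations increase it by one, so every other contribution lives in $\bigoplus_{k<n}\mathcal{H}_{\mathbb{C}}^{\otimes k}$. Consequently
\begin{equation*}
S(h_1)\cdots S(h_n)\Omega \;=\; h_1\otimes\cdots\otimes h_n \;+\; R_n, \qquad R_n \in \bigoplus_{k=0}^{n-1}\mathcal{H}_{\mathbb{C}}^{\otimes k}.
\end{equation*}

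From this I obtain the two inclusions needed to match gradings. First, trivially $U(\mathcal{P}_n) \subseteq \bigoplus_{k=0}^{n}\mathcal{H}_{\mathbb{C}}^{\otimes k}$. Second, I argue by induction on $n$: assuming $U(\overline{\mathcal{P}_{n-1}}) = \bigoplus_{k=0}^{n-1}\mathcal{H}_{\mathbb{C}}^{\otimes k}$, the leading-term formula shows that every simple tensor $h_1\otimes\cdots\otimes h_n$ with $h_i \in \mathcal{H}_{\mathbb{R}}$ lies in $U(\mathcal{P}_n) + U(\mathcal{P}_{n-1})$, hence in $U(\overline{\mathcal{P}_n})$. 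Since such real simple tensors span a dense subspace of $\mathcal{H}_{\mathbb{C}}^{\otimes n}$ (here one invokes that $\mathcal{H}_{\mathbb{C}}=\mathcal{H}_{\mathbb{R}}\otimes_{\mathbb{R}}\mathbb{C}$), this gives $U(\overline{\mathcal{P}_n}) = \bigoplus_{k=0}^{n}\mathcal{H}_{\mathbb{C}}^{\otimes k}$. Taking orthogonal complements inside consecutive levels yields $U(\mathcal{H}_n) = \mathcal{H}_{\mathbb{C}}^{\otimes n}$, and summing over $n$ and passing to the closure concludes the proof, simultaneously giving the chaotic decomposition of $L^2(\mathcal{SC}(\mathcal{H}_{\mathbb{R}}))$ and its unitary identification with $F(\mathcal{H}_{\mathbb{C}})$.

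I expect the main technical obstacle to be the inductive density step: one must verify cleanly that no cancellation prevents $h_1\otimes\cdots\otimes h_n$ from appearing as the top-degree component, and one must also handle the passage from real simple tensors to a dense subset of the complex tensor power, so that $U(\overline{\mathcal{P}_n})$ really fills up $\bigoplus_{k=0}^{n}\mathcal{H}_{\mathbb{C}}^{\otimes k}$. The other routine-but-delicate point is that $\mathcal{P}_n$ is not a priori closed in $L^2$, so I must systematically work with its $L^2$-closure when subtracting orthogonal complements to extract $\mathcal{H}_n$.
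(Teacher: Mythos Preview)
Your proposal is correct and is exactly the standard argument. Note, however, that the paper does not actually prove this theorem: it is stated with attribution to Biane--Speicher (proposition 5.3.2 in \cite{BS}) and no proof is given. The discussion immediately preceding the theorem does set up the isometries $I_n: h_1\otimes\cdots\otimes h_n \mapsto \pi_n(S(h_1)\cdots S(h_n))$, which is precisely the triangular/Wick-map mechanism you exploit via the leading-term formula $S(h_1)\cdots S(h_n)\Omega = h_1\otimes\cdots\otimes h_n + R_n$; so your approach is fully aligned with the paper's setup and with the original Biane--Speicher argument it cites.
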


\begin{flushleft}
When the real separable Hilbert space $\mathcal{H}_{\mathbb{R}}=L^2_{\mathbb{R}}(\mathbb{R}_+)$, this is the free Brownian motion case which can be defined as a centered semicircular process of covariance function:
\begin{equation}
K(t,s):=\tau(S_tS_s)=t\wedge s,\: \mbox{\:$t,s\geq 0$},\nonumber
\end{equation}

We also denote the corresponding Brownian filtration (non-decreasing sequences of von Neumann subalgebras whose union weakly generates $\mathcal{SC}({L^2_{\mathbb{R}}(\mathbb{R}_+)}):=\mathcal{SC}$)
\begin{equation}
\mathcal{A}_t=\left\{S(\mathds{1}_{[0,s]}), 0\leq s\leq t\right\}^{''}\nonumber
\end{equation}
and we recall that the free Brownian motion is a noncommutative martingale with respect to this filtration: $\tau(S_t|\mathcal{A}_u)=S_u$, for $0\leq u\leq t$.
\end{flushleft}

\bigbreak
\textbf{In the sequel, we only focus on free Stochastic analysis on the classical Wigner space and will denote both at convenience $\mathcal{SC}$ or $W^*(\left\{S_t,t\geq 0\right\})$  the von Neumann algebra generated by a free Brownian motion.}

\begin{definition}
We define the Tchebychev polynomials of second kind (in the formal variable $X$) as the sequence: $U_0=1$, $U_1=X$ and recursively:
\begin{equation}
XU_k=U_{k+1}+U_{k-1}
\end{equation}
\end{definition}
Note that these are exactly the orthogonal polynomials associated to the standard semicircular distribution and provide the Wigner Ito-isometry since we have:
\begin{proposition}
For $h\in L^2_{\mathbb{R}}(\mathbb{R}_+), \lVert h\rVert_{L^2(\mathbb{R}_+)}=1$, we have:
\begin{equation}
U_p(S(h))=I_p(h^{\otimes p})\nonumber
\end{equation}
and for $n,m\in \mathbb{N}$, and $h,h'\in L^2_{\mathbb{R}}(\mathbb{R}_+)$ of norm $1$, we have:
\begin{eqnarray}
\langle U_n(S(h)),U_m(S(h')\rangle_{L^2(\mathcal{SC})} = \left\{
    \begin{array}{ll}
        \langle h,h'\rangle_{L^2(\mathbb{R}_+)}^n& \mbox{if } n=m\\
        0 & \mbox{if } n\neq m
    \end{array}
\right.
\end{eqnarray}
\end{proposition}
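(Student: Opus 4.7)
The plan is to exploit the fact that $\Omega$ is cyclic and separating for $\mathcal{SC}(\mathcal{H}_{\mathbb{R}})$, so any operator $X\in\mathcal{SC}$ is determined by the vector $X\Omega\in F(\mathcal{H}_{\mathbb{C}})$. Under the unitary identification $L^2(\mathcal{SC})\simeq F(\mathcal{H}_{\mathbb{C}})$, the multiple integral $I_p(h^{\otimes p})$ is, by construction, the operator whose image in the Fock space is the tensor $h^{\otimes p}$. So it suffices to check the single identity
\begin{equation*}
U_p(S(h))\,\Omega \;=\; h^{\otimes p} \qquad \text{in } F(\mathcal{H}_{\mathbb{C}}),
\end{equation*}
which will force $U_p(S(h))=I_p(h^{\otimes p})$ in $L^2(\mathcal{SC})$ and, incidentally, show that $U_p(S(h))$ lies in the $p$-th homogeneous chaos $\mathcal{H}_p$.

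I would prove this by induction on $p$, the engine being the elementary computation, valid because $\|h\|=1$,
\begin{equation*}
S(h)\,h^{\otimes k} \;=\; l(h)h^{\otimes k}+l^{*}(h)h^{\otimes k} \;=\; h^{\otimes(k+1)}+\langle h,h\rangle\, h^{\otimes(k-1)} \;=\; h^{\otimes(k+1)}+h^{\otimes(k-1)}.
\end{equation*}
Starting from $U_0(S(h))\Omega=\Omega$ and $U_1(S(h))\Omega=S(h)\Omega=h$, and using the three-term recurrence $XU_k=U_{k+1}+U_{k-1}$, one gets
\begin{equation*}
U_{p+1}(S(h))\Omega \;=\; S(h)U_p(S(h))\Omega - U_{p-1}(S(h))\Omega \;=\; S(h)h^{\otimes p}-h^{\otimes(p-1)}\;=\;h^{\otimes(p+1)},
\end{equation*}
which closes the induction and proves the first identity.

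For the orthogonality statement I would simply combine the first identity with the isometric property of $I_n:\mathcal{H}_{\mathbb{C}}^{\otimes n}\to\mathcal{H}_n$ and the definition of the Fock space inner product. Indeed,
\begin{equation*}
\langle U_n(S(h)),U_m(S(h'))\rangle_{L^2(\mathcal{SC})}
=\langle I_n(h^{\otimes n}),I_m((h')^{\otimes m})\rangle_{L^2(\mathcal{SC})}
=\langle h^{\otimes n},(h')^{\otimes m}\rangle_{F(\mathcal{H}_{\mathbb{C}})},
\end{equation*}
and the inner product defining the full Fock space vanishes when $n\neq m$ and equals $\langle h,h'\rangle^n$ when $n=m$. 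The only point that requires a tiny bit of care, and is really the only potential obstacle, is making sure that $U_p(S(h))$ is bona fide an element of $L^2(\mathcal{SC})$ (which is automatic since it is a polynomial in the bounded operator $S(h)\in\mathcal{SC}$) and that the separating property of $\Omega$ does indeed let us lift the equality of vectors $U_p(S(h))\Omega=h^{\otimes p}=I_p(h^{\otimes p})\Omega$ to an equality of operators in $L^2(\mathcal{SC})$; both facts have already been recorded in the preceding paragraphs of the excerpt, so the argument is essentially an induction plus an inner product computation.
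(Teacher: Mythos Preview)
Your argument is correct and is in fact the standard proof of this well-known identity. The paper does not supply a proof of this proposition at all; it is stated as a known fact immediately after the definition of the Chebyshev polynomials, and the text moves on directly to the construction of multiple Wigner--It\^o integrals via off-diagonal indicator functions. So there is nothing to compare against on the paper's side.

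A couple of minor comments on the presentation. First, you invoke the separating property of $\Omega$ to pass from $U_p(S(h))\Omega=I_p(h^{\otimes p})\Omega$ to equality of operators. Strictly speaking, separation applies to elements of $\mathcal{SC}$, so you are implicitly using that $I_p(h^{\otimes p})\in\mathcal{SC}$; at this stage of the paper the Haagerup--Biane--Speicher bound has not yet been stated, but this is harmless, since the unitary $L^2(\mathcal{SC})\to F(\mathcal{H}_{\mathbb C})$, $X\mapsto X\Omega$, already gives injectivity at the $L^2$ level, which is all you need. Alternatively, one can argue without any appeal to $I_p$ at all: once you know $U_p(S(h))\Omega=h^{\otimes p}\in\mathcal{H}_{\mathbb C}^{\otimes p}$, it follows that $U_p(S(h))\in\mathcal{H}_p$, and since $U_p$ is monic of degree $p$ one has $\pi_p(U_p(S(h)))=\pi_p(S(h)^p)=I_p(h^{\otimes p})$, whence $U_p(S(h))=I_p(h^{\otimes p})$. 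Second, in the induction you should record the edge case $p=1$ separately (where $l^*(h)\Omega=0$ rather than $h^{\otimes(-1)}$), but you already did this by starting the induction at $p=0,1$.
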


In this setting, the {\it homegenous Wigner chaos} of order $n\geq 1$ can also be defined as the set of {\it multiple free stochastic integrals} of order $n$.
\begin{definition}
    Let the collection of diagonals, i.e. $D^n\subset \mathbb{R}^n_+$,
    \begin{equation}
        D^n=\left\{(t_1,\ldots,t_n)\in\mathbb{R}^n_+,1\leq i,j\leq n, \exists i\neq j , t_i=t_j \right\}
    , \end{equation}
    
    For a characteristic function
        $f=\mathds{1}_A$, where $A=[u_1,v_1]\times\ldots\times [u_n,v_n]$ with $A\cap D^n=\emptyset$

We define the multiple Wigner-Ito stochastic integral as :
\begin{equation}
    I_n^S(f):=(S_{v_1}-S_{u_1})\ldots(S_{v_n}-S_{u_n})
, \end{equation}
    \end{definition}
Which is then extended linearly for:
\begin{equation}
    f=\sum_{i=1}^k a_i\mathds{1}_{A_i}
, \end{equation}
where each $A_i=[u_1^i,v_i^1]\times\ldots\times[u_n^i,v_n^i]$ are disjoints rectangles with moreover $A_i\cap D^n=\emptyset$.

    \begin{lemma} (Wigner-Ito isometry)
    For $f\in L^2(\mathbb{R}_+^n),g\in L^2(\mathbb{R}_+^m)$ simple functions, we have:
    \begin{equation}
    \tau(I_n(f)^*I_m(g))=\delta_{n,m}\langle g,f\rangle_{L^2(\mathbb{R}_+^n)}
        , \end{equation}
    where $\delta$ denotes the delta Kroenecker symbol.
    \end{lemma}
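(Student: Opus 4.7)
The plan is to reduce to a common refinement of the intervals appearing in $f$ and $g$, apply the free Wick (moment–cumulant) formula for semicircular variables, and pick out the unique non-crossing pair partition that survives.

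Choose a partition $0 = t_0 < t_1 < \cdots < t_N$ of $\mathbb{R}_+$ containing every endpoint appearing in $f$ and $g$, and set $E_k := [t_{k-1}, t_k]$ and $\Delta_k := S_{t_k} - S_{t_{k-1}}$. Writing
\begin{equation*}
f = \sum_{\mathbf{i}} a_{\mathbf{i}}\, \mathds{1}_{E_{i_1} \times \cdots \times E_{i_n}}, \qquad g = \sum_{\mathbf{j}} b_{\mathbf{j}}\, \mathds{1}_{E_{j_1} \times \cdots \times E_{j_m}},
\end{equation*}
the off-diagonal constraint $A \cap D^n = \emptyset$ forces the coordinates of each tuple $\mathbf{i}$ to be pairwise distinct, and likewise for $\mathbf{j}$. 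By linearity $I_n(f) = \sum_{\mathbf{i}} a_{\mathbf{i}} \Delta_{i_1} \cdots \Delta_{i_n}$, and since each $\Delta_k$ is self-adjoint, $I_n(f)^* = \sum_{\mathbf{i}} \overline{a_{\mathbf{i}}} \Delta_{i_n} \cdots \Delta_{i_1}$.

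The increments $\{\Delta_1, \ldots, \Delta_N\}$ form a free family of centred semicircular operators with $\tau(\Delta_k^2) = t_k - t_{k-1}$, because the underlying $E_k$ are pairwise disjoint. The free Wick formula for semicirculars thus yields
\begin{equation*}
\tau(I_n(f)^* I_m(g)) = \sum_{\mathbf{i}, \mathbf{j}} \overline{a_{\mathbf{i}}} b_{\mathbf{j}} \sum_{\pi \in NC_2(n+m)} \prod_{\{a,b\} \in \pi} \delta_{k_a, k_b}\, (t_{k_a} - t_{k_a - 1}),
\end{equation*}
where $(k_1, \ldots, k_{n+m}) = (i_n, \ldots, i_1, j_1, \ldots, j_m)$. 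Since the $i_\ell$'s (resp.\ $j_\ell$'s) are pairwise distinct, any pair in $\pi$ lying entirely inside $\{1,\ldots,n\}$ or entirely inside $\{n+1,\ldots,n+m\}$ contributes zero. Every surviving pair must therefore straddle the boundary, which forces $n = m$. Among non-crossing pair partitions of $\{1,\ldots,2n\}$ whose associated Dyck word has openings on the first $n$ positions and closings on the last $n$, the only possibility is the "rainbow" pattern $\pi_0 = \{\{\ell,\, 2n+1-\ell\} : 1 \leq \ell \leq n\}$, and the matching conditions $k_a = k_b$ for $\pi_0$ translate to $i_\ell = j_\ell$ for every $\ell$.

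Substituting gives
\begin{equation*}
\tau(I_n(f)^* I_m(g)) = \delta_{n,m} \sum_{\mathbf{i}} \overline{a_{\mathbf{i}}} b_{\mathbf{i}} \prod_{\ell=1}^n (t_{i_\ell} - t_{i_\ell - 1}) = \delta_{n,m} \int_{\mathbb{R}_+^n} g\, \overline{f}\; d\mathbf{t} = \delta_{n,m} \langle g, f\rangle_{L^2(\mathbb{R}_+^n)},
\end{equation*}
which is the claim. The main obstacle is the purely combinatorial step that pinpoints $\pi_0$ as the only surviving bipartite non-crossing pair partition; the rest is bookkeeping once the free Wick formula for semicirculars is in place.
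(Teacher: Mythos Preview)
Your proof is correct. The reduction to a common refinement, the application of the free moment--cumulant formula for free semicircular increments, and the combinatorial identification of the rainbow partition as the unique surviving non-crossing pair partition are all sound; the final bookkeeping gives exactly $\delta_{n,m}\langle g,f\rangle_{L^2(\mathbb{R}_+^n)}$.

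The paper itself does not supply a proof of this lemma: it is stated as a known fact (originating in Biane--Speicher \cite{BS}) and immediately used to extend $f\mapsto I_n(f)$ by density. Your argument is essentially the standard one that underlies the Biane--Speicher construction, so there is no divergence in approach to report --- you have simply filled in what the paper leaves implicit. One small remark: the claim that the rainbow $\pi_0$ is the only bipartite non-crossing pair partition deserves the one-line inductive justification (pair $n$ with $n{+}1$, else a crossing is forced), which you gesture at but do not spell out; in a polished version it would be worth including.
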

    By density in $L^2$ of simple functions in $L^2(\mathbb{R}_+)$ and Wigner-Ito isometry, we can extend the map:

    \begin{equation}
        f\mapsto I_n(f)
    , \end{equation}
    for all $f\in L^2(\mathbb{R}_+^n)$.
    which will be generally denoted as in the classical case by the formal symbol
    \begin{equation}
    I_n(f)=\int_{\mathbb{R}_+^n} f(t_1,\ldots,t_n)dS_{t_1}\ldots dS_{t_n}
    \end{equation}
Thus we have that
\begin{equation}
\mathcal{H}_n=\left\{I_n(f), f\in L^2(\mathbb{R}_+^n)\right\}
\end{equation}
    \begin{flushleft}
An important lemma is the following, called the {\it Haagerup-Biane-Speicher} inequality, or more precisely its semicircular version (Haagerup lemma 1.4 in \cite{Haag} for length functions over words of length $n$, $f: F_{\infty}\rightarrow C^*_{red}(F_{\infty})$ and prove in the semicircular case by Biane and Speicher), which implies that our multiple integrals constructed to be in  $L^2(\mathcal{SC},\tau)$ are in fact $\mathcal{SC}$ and more precisely they belong to the $C^*$-algebra generated by $\left\{S_t\right\}_{t\geq 0}$ (theorem 5.3.4 of \cite{BS}). This inequality shows that one surprisingly has a control of the operator norm of a multiple Wigner integral by its $L^2$ norm (contrary to the classical case, where Wiener functionals with finite chaotic expansion have moments of any positive order and are not generally bounded).
\end{flushleft}
\begin{theorem}(Haagerup, Biane Speicher \cite{BS})\label{33}.
Let $n\geq 0$, then for all $F \in \mathcal{H}_n$, we have :
\begin{equation}
    \lVert F \rVert_{L^{\infty}(\mathcal{SC})}\leq (n+1)\lVert F \rVert_{L^2(\mathcal{SC})},
\end{equation}
\end{theorem}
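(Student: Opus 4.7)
My plan is to leverage the Wigner--It\^o identification $L^2(\mathcal{SC}) \simeq F(\mathcal{H}_{\mathbb{C}})$: an element $F \in \mathcal{H}_n$ corresponds to some $f \in \mathcal{H}_{\mathbb{C}}^{\otimes n}$ with $\|F\|_2 = \|f\|_{L^2}$, and left multiplication by $F = I_n(f)$ is a bounded operator on $F(\mathcal{H}_{\mathbb{C}})$ whose norm equals $\|F\|_{L^\infty(\mathcal{SC})}$. The strategy is to decompose this operator into exactly $n+1$ summands on the Fock space and bound each summand in operator norm by $\|f\|_{L^2}$.

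First I would establish the free Wick formula: for simple tensors
\[
I_n(f_1 \otimes \cdots \otimes f_n) \;=\; \sum_{k=0}^{n} l(f_1)\cdots l(f_k)\, l^*(f_{k+1})\cdots l^*(f_n),
\]
by induction on $n$ using $S(h) = l(h) + l^*(h)$, the multiplicative recursion
\[
S(h_0) I_n(h_1 \otimes \cdots \otimes h_n) = I_{n+1}(h_0 \otimes \cdots \otimes h_n) + \langle h_0, h_1\rangle\, I_{n-1}(h_2 \otimes \cdots \otimes h_n),
\]
and the commutation relation $l^*(h) l(g) = \langle h, g\rangle\,\mathrm{Id}$ on $F(\mathcal{H}_{\mathbb{C}})$. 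For general $f \in \mathcal{H}_{\mathbb{C}}^{\otimes n}$ I extend the $(k+1)$-th summand above by linearity and continuity to a bounded operator $C_k(f) \in \mathcal{B}(F(\mathcal{H}_{\mathbb{C}}))$, obtaining the decomposition $I_n(f) = \sum_{k=0}^{n} C_k(f)$.

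The crucial step is the uniform estimate $\|C_k(f)\|_{\mathcal{B}(F)} \leq \|f\|_{L^2}$. Identifying $\mathcal{H}_{\mathbb{C}}^{\otimes n} \simeq \mathcal{H}_{\mathbb{C}}^{\otimes k} \otimes \mathcal{H}_{\mathbb{C}}^{\otimes (n-k)}$, the tensor $f$ corresponds to a Hilbert--Schmidt operator $T_k(f):\mathcal{H}_{\mathbb{C}}^{\otimes(n-k)}\to \mathcal{H}_{\mathbb{C}}^{\otimes k}$ (the order reversal inherent to $l^*(f_{k+1})\cdots l^*(f_n)$ only fixes which such identification is used). Restricted to $\mathcal{H}_{\mathbb{C}}^{\otimes m} \subset F(\mathcal{H}_{\mathbb{C}})$, the operator $C_k(f)$ vanishes when $m < n-k$ and, for $m \geq n-k$, factors as $T_k(f)\otimes \mathrm{Id}_{\mathcal{H}_{\mathbb{C}}^{\otimes(m-n+k)}}$ via the splitting $\mathcal{H}_{\mathbb{C}}^{\otimes m} = \mathcal{H}_{\mathbb{C}}^{\otimes(n-k)}\otimes \mathcal{H}_{\mathbb{C}}^{\otimes(m-n+k)}$. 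Its level-wise norm is thus bounded by $\|T_k(f)\|_{\mathrm{op}} \leq \|T_k(f)\|_{\mathrm{HS}} = \|f\|_{L^2}$. Because $C_k(f)$ shifts the level index by the fixed integer $2k-n$, the images from distinct $m$ are mutually orthogonal inside $F(\mathcal{H}_{\mathbb{C}})$, so the level-wise bound passes to the full Fock space.

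Assembling the pieces, the triangle inequality gives
\[
\|F\|_{L^\infty(\mathcal{SC})} = \|I_n(f)\|_{\mathcal{B}(F)} \;\leq\; \sum_{k=0}^{n} \|C_k(f)\|_{\mathcal{B}(F)} \;\leq\; (n+1)\,\|f\|_{L^2} \;=\; (n+1)\,\|F\|_{L^2(\mathcal{SC})}.
\]
The main obstacle I anticipate is precisely the factorisation of each $C_k(f)$ as a Hilbert--Schmidt operator tensored with an identity, and the associated bookkeeping of how the $n-k$ "annihilator" slots of $f$ get paired with the first $n-k$ slots of the incoming tensor; once this is set up, the elementary inequality $\|T\|_{\mathrm{op}} \leq \|T\|_{\mathrm{HS}}$ closes the argument. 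The approach mirrors Haagerup's original $C^*_{\mathrm{red}}(\mathbb{F}_\infty)$ estimate for words of fixed length, transferred here to the semicircular setting via the isomorphism with the full Fock space.
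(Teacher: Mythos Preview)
The paper does not supply its own proof of this theorem; it is only stated, with attribution to Haagerup and to Biane--Speicher (theorem 5.3.4 of \cite{BS}). Your proposal is correct and reproduces exactly the argument of that cited reference: the Wick (creation--annihilation) decomposition of $I_n(f)$ on the full Fock space into $n+1$ summands $C_k(f)$, each of which acts level-wise as a Hilbert--Schmidt operator tensored with an identity, giving $\|C_k(f)\|_{\mathcal B(F)}\le\|f\|_{L^2}$ and hence the bound $(n+1)\|f\|_{L^2}$ via the triangle inequality.
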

The following proposition ensures that the multiple Wiener-Itô integral behaves well with respect to the product. Indeed, elements in some homogeneous Wigner chaos are bounded operators by the previous theorem \ref{33} and so, we are allowed to multiply them. In fact, this formula linearises the product of two multiple Wigner integrals (and is much simpler than in the classical case, as the combinatoric in free probability is much lighter since it involves a smaller class of partitions: the non-crossing ones), see Nourdin and Peccati section 2.7.3 in \cite{NP-book} for comparison with the classical case. 
\begin{flushleft}
Before stating the result, we begin with the definition of {\it nested} contractions.
\end{flushleft}
\begin{definition}
Let $f \in L^2(\mathbb{R}^n_+)$ and $g \in L^2(\mathbb{R}^m_+)$, for every
\newline
$0\leq p\leq n\wedge m$, we define the {\it nested} contraction of order $p$ of $f$ and $g$ as the element of $L^2(\mathbb{R}^{n+m-2p}_+)$ by:
\begin{equation}
    f\stackrel{p}{\frown} g(t_1,...,t_{n+m-2p})=\int_{\mathbb{R}_+^{p}}f(t_1,...,t_{n-p},s_p,...,s_1)g(s_1,...,s_p,t_{n-p+1}...,t_{n+m-2p})d{s_1}...d{s_p}
\end{equation}
\end{definition}
\begin{proposition}(Biane, Speicher proposition 5.3.3 in \cite{BS}) \label{pprodui}
For all $f \in L^2(\mathbb{R}^n_+)$ and $g \in L^2(\mathbb{R}^m_+)$, we have:
\begin{equation}
    I_n(f)I_m(g)=\sum_{p=0}^{n\wedge m}I_{n+m-2p}(f\stackrel{p}{\frown} g),
\end{equation}
\end{proposition}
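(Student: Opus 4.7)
The idea is to reduce the claimed operator identity to an identity of vectors on the free Fock space: apply both sides to the vacuum $\Omega$, verify they produce the same vector, and then invoke the fact that $\Omega$ is separating for $\mathcal{SC}$. First, by the Wigner--Ito isometry and Theorem \ref{33}, the map $(f,g)\mapsto I_n(f)I_m(g)$ is continuous from $L^2(\mathbb{R}_+^n)\times L^2(\mathbb{R}_+^m)$ to $L^2(\mathcal{SC})$, and the nested contraction is a continuous bilinear map of norm at most one. Hence it suffices to prove the formula when $f=h_1\otimes\cdots\otimes h_n$ and $g=g_1\otimes\cdots\otimes g_m$ are elementary tensors with $h_i,g_j\in L^2_{\mathbb{R}}(\mathbb{R}_+)$.

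The key step is a Wick-type formula for the action of $I_n(f)$ on the Fock space. I claim that for such elementary tensors and any $\xi=g_1\otimes\cdots\otimes g_m\in\mathcal{H}_{\mathbb{C}}^{\otimes m}$,
\begin{equation*}
I_n(f)\,\xi \;=\; \sum_{k=0}^{n\wedge m}\Big(\prod_{j=1}^{k}\langle h_{n-j+1},g_j\rangle\Big)\,h_1\otimes\cdots\otimes h_{n-k}\otimes g_{k+1}\otimes\cdots\otimes g_m.
\end{equation*}
The case $n=1$ is the very definition $I_1(h)=S(h)=\ell(h)+\ell^*(h)$, which yields exactly the two terms $k=0,1$. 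The inductive step uses the identity $I_n(h_1\otimes\cdots\otimes h_n)\Omega = h_1\otimes\cdots\otimes h_n$ (proved on step functions by a direct computation of a product of free Brownian increments through $S(h)=\ell(h)+\ell^*(h)$, and extended to elementary tensors by linearity and $L^2$-continuity), together with the commutation $\ell^*(h)\ell(g)=\langle h,g\rangle$ on the vacuum sector, to propagate the formula from $n-1$ to $n$. The nested pattern of the pairings $\langle h_{n-j+1},g_j\rangle$ is exactly the non-crossing combinatorial signature proper to free probability.

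With this formula in hand, I evaluate both sides of the target identity on $\Omega$. Since $I_m(g)\Omega=g$, the left-hand side produces the sum displayed above, while a direct evaluation of the nested contraction for elementary tensors yields
\begin{equation*}
f\stackrel{p}{\frown} g \;=\; \Big(\prod_{j=1}^{p}\langle h_{n-j+1},g_j\rangle\Big)\,h_1\otimes\cdots\otimes h_{n-p}\otimes g_{p+1}\otimes\cdots\otimes g_m,
\end{equation*}
so that summing $I_{n+m-2p}(f\stackrel{p}{\frown} g)\Omega=f\stackrel{p}{\frown} g$ over $p$ reproduces the same vector term by term. Since both operators belong to $\mathcal{SC}$ by Theorem \ref{33} and $\Omega$ is separating for $\mathcal{SC}$, equality on $\Omega$ forces operator equality, and the initial density argument extends the conclusion to arbitrary $f\in L^2(\mathbb{R}_+^n), g\in L^2(\mathbb{R}_+^m)$. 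The main obstacle is the Wick-type operator formula in the middle paragraph: obtaining it without circularly invoking the product rule itself is precisely the Biane--Speicher identification of the Wigner chaoses with the tensor powers $\mathcal{H}_{\mathbb{C}}^{\otimes n}$ via the Wick/free chaos map.
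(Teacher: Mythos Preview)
Your approach is correct and is essentially the original Biane--Speicher Fock-space argument: reduce to elementary tensors, compute the action of $I_n(f)$ on pure tensors via the creation/annihilation structure, and conclude by the separating property of $\Omega$. The inductive step you sketch can be made rigorous through the operator recursion $I_n(h_1\otimes\cdots\otimes h_n)=S(h_1)\,I_{n-1}(h_2\otimes\cdots\otimes h_n)-\langle h_1,h_2\rangle\,I_{n-2}(h_3\otimes\cdots\otimes h_n)$, which itself follows from writing $l(h_1)=S(h_1)-l^*(h_1)$, applying it to $I_{n-1}(\cdot)\Omega=h_2\otimes\cdots\otimes h_n$, and invoking the separating vacuum. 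So the potential circularity you flag is indeed avoidable.

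The paper, however, deliberately takes a different route. Its proof in Section~8 is the advertised ``sophisticated proof \`a la \"Ust\"unel, Nourdin and Peccati'': it first establishes a Leibniz rule for the iterated gradient,
\[
\nabla^p_{t_1,\ldots,t_p}(FG)=\sum_{k=0}^p(\mathrm{id}^{\otimes k}\otimes m_1\otimes\mathrm{id}^{\otimes(p-k)})\bigl(\nabla^k_{t_1,\ldots,t_k}F\otimes\nabla^{p-k}_{t_{k+1},\ldots,t_p}G\bigr),
\]
and then invokes the free Stroock formula (Theorem~\ref{Stro}) to identify the $p$-th kernel of $I_n(f)I_m(g)$ as $\tau^{\otimes(p+1)}\bigl(\nabla^p(I_n(f)I_m(g))\bigr)$. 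Using the explicit action of $\nabla^k$ on chaoses (Proposition~\ref{proposition9}) and the fact that Wigner integrals of positive order are centered, applying $\tau^{\otimes(p+1)}$ annihilates every term in the Leibniz sum except the one with $k=(n-m+p)/2$, and that surviving term is recognised as $f\stackrel{(n+m-p)/2}{\frown}g$. Your Fock-space proof is shorter and self-contained; the paper's argument is longer but is precisely the point of Section~8: it shows that the higher-order Malliavin machinery (Sobolev--Wigner spaces, iterated gradients, Stroock formula) developed in Section~6 is rich enough to recover the product rule intrinsically, mirroring the classical Wiener-space story of \"Ust\"unel and Nourdin--Peccati.
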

\begin{flushleft}
In particular for all $n,m\geq 0$, we have the following {\it Wigner-Ito} isometry: \begin{equation}
\tau(I_n(f)^*I_m(g))=\delta_{n,m}\langle g,f\rangle_{L^2(\mathbb{R}_+^n)}
\end{equation}
\end{flushleft}
\begin{remark}
Given a function $f \in L^2(\mathbb{R}^n_+)$, the adjoint of this function is defined almost everywhere by:
\begin{equation}
    f^*(t_1,...,t_n)=\overline{f(t_n,...,t_1)}\nonumber
\end{equation}
which ensure that $I_n(f)^*=I_n(f^*)$. We then easily deduce that $I_n(f)$ is self-adjoint if and if only $f=f^*$. Such functions are usually called {\it mirror-symmetric} (see \cite{Di}).
\end{remark}

\section{The free Malliavin calculus on the Wigner space}\label{part2}

The classical Malliavin operators (onto gaussian spaces) have a free counterpart in the context of free probability (and only for now onto semicircular spaces) thanks to the breakthrough work of Biane and Speicher in \cite {BS}. This construction could be done efficiently on the Free Fock space and then transferred onto the algebra of field operators by the identification $X\mapsto X\Omega$ where $\Omega$ denotes the "vaccum" vector and the $*$-unital algebra generated by the field operators. Since, we are in presence of a closable operator, we will consider as usual the closure of the gradient (still denoted in the by same symbol), defined on a domain which will be the completion of this unital algebra with respect to the $L^p, p\geq 1$-norms. For sake of clarity, we will also assume standard identifications of spaces as usual in the Malliavin calculus. We will however as mentioned before, and due to the heavy notations, only focus our study on the classical Wigner space, i.e. when $\mathcal{H}_{\mathbb{R}}=L^2_{\mathbb{R}}(\mathbb{R}_+)$ (which will be typically suppressed as the Hilbert space is fixed all along the paper).
\begin{definition}
Fix $p\geq 1$, then the free Malliavin derivative is the unique unbounded closable operator (valued into the $L^p$-integrable biprocesses $\mathcal{B}_p$):
\begin{eqnarray}
    \nabla &: &L^p(\mathcal{\mathcal{SC}},\tau) \rightarrow L^p(\mathbb{R}_+,L^p(\mathcal{\mathcal{SC}},\tau)\otimes L^p(\mathcal{\mathcal{SC}},\tau))\nonumber\\
    &&A\mapsto \nabla A=(\nabla_t A)_{t\geq 0}
\end{eqnarray}
such that for all $h\in L^2_{\mathbb{R}}(\mathbb{R}_+)$, $\nabla(S(h))=h.1\otimes 1$, and that, for all $A,B \in S_{alg}$ (where $S_{alg}$ is the unital $*$-algebra generated by $\left\{S(h),h\in L^2_{\mathbb{R}}(\mathbb{R}_+)\right\}$, we have the derivation property $\nabla(AB)=A.\nabla B+ \nabla A.B$ where the left and right actions are given by the multiplication on the left leg and opposite multiplication on the right leg.
\end{definition}
\begin{definition}
We denote for any $p\geq 1$, $\mathbb{D}^{1,p}$ the domain of $\nabla$ viewed as a closable unbounded operator from $L^p(\mathcal{SC})$ to $\mathcal{B}_p$, and which is defined as the completion of $\mathcal{SC}_{alg}$, with respect to the following norm:
\begin{equation}
    \lVert Y\rVert_{1,p}^p=\lVert Y\rVert_p^p+\lVert \nabla Y\rVert_{\mathcal{B}_p}^p.
\end{equation}
\end{definition}
\begin{definition}
For $F\in \mathcal{SC}_{alg}$ and $h\in L^2(\mathbb{R}_+)$, we define the pairing (directional Malliavin derivative):
\begin{equation}
\nabla^h F:=\langle \nabla F,h\rangle_{L^2(\mathbb{R}_+)}:=\int_{\mathbb{R}_+} \nabla_t F \:\overline{h(t)}dt
\end{equation}
\end{definition}
We know recall the (probably) most important formula, which is at the basis of the free Malliavin calculus as it will provide the {\it integration-by-parts} formula.
\begin{lemma}(NC Wick formula)
Let $F=S(e_1)\ldots S(e_n) \in \mathcal{SC}_{alg}$ where $e_1,\ldots,e_n, h\in L^2_{\mathbb{R}}(\mathbb{R}_+)$, then:
\begin{equation}
\tau(FS(h))=\sum_{k=1}^n\langle e_k,h\rangle_{L^2(\mathbb{R}_+)} \tau(S(e_1)\ldots S(e_{k-1}))\tau(S(e_{k+1})\ldots S(e_n))
\end{equation}
\end{lemma}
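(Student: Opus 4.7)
The plan is to invoke the free Wick/moment formula for semicirculars and reduce the statement to a purely combinatorial identity about non-crossing pair partitions. Recall that for any $f_1,\ldots,f_m\in L^2_{\mathbb{R}}(\mathbb{R}_+)$,
\begin{equation*}
\tau(S(f_1)\cdots S(f_m))=\sum_{\pi\in NC_2(m)}\prod_{\{i,j\}\in\pi}\langle f_i,f_j\rangle_{L^2(\mathbb{R}_+)},
\end{equation*}
where $NC_2(m)$ denotes the set of non-crossing pair partitions of $\{1,\ldots,m\}$ (and the sum is empty, hence $0$, when $m$ is odd). This is standard and amounts to the vacuum expectation computation on $F(\mathcal{H}_\mathbb{C})$: expanding $S(f_i)=l(f_i)+l^*(f_i)$, using $S(f_i)\Omega=f_i$, $l^*(f)\Omega=0$, and $l^*(f)l(g)=\langle f,g\rangle\,\mathrm{Id}$, only the words whose partial excedance is non-negative survive, and these are parametrised precisely by $NC_2(m)$. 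This identity appears in \cite{BS}, so I will use it as a black box.

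Applying the formula to $\tau(FS(h))=\tau(S(e_1)\cdots S(e_n)S(h))$, I index $S(h)$ by the position $n+1$ and split the sum over $NC_2(n+1)$ according to the pair $\{k,n+1\}$ containing $n+1$. The crucial combinatorial observation is that the non-crossing condition forces $\{k+1,\ldots,n\}$ to be matched entirely inside itself (a pair $\{i,j\}$ with $i<k<j<n+1$ would cross $\{k,n+1\}$), while $\{1,\ldots,k-1\}$ is unconstrained by the chord $\{k,n+1\}$. Consequently $\pi$ decomposes uniquely as $\{k,n+1\}\sqcup\pi_1\sqcup\pi_2$ with $\pi_1\in NC_2(\{1,\ldots,k-1\})$ and $\pi_2\in NC_2(\{k+1,\ldots,n\})$, and factorising the product over blocks yields
\begin{equation*}
\tau(FS(h))=\sum_{k=1}^{n}\langle e_k,h\rangle\,\Big(\!\!\sum_{\pi_1\in NC_2(\{1,\ldots,k-1\})}\!\!\prod_{\{i,j\}\in\pi_1}\langle e_i,e_j\rangle\Big)\Big(\!\!\sum_{\pi_2\in NC_2(\{k+1,\ldots,n\})}\!\!\prod_{\{i,j\}\in\pi_2}\langle e_i,e_j\rangle\Big),
\end{equation*}
which by the Wick formula applied in reverse equals $\sum_{k=1}^{n}\langle e_k,h\rangle\,\tau(S(e_1)\cdots S(e_{k-1}))\,\tau(S(e_{k+1})\cdots S(e_n))$. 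This is precisely the claim; when one of the two remaining blocks has odd cardinality its $NC_2$ is empty and the corresponding term vanishes, which matches the fact that the left-hand side itself vanishes for $n$ even.

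The only non-routine step is the combinatorial factorisation of a non-crossing pair partition across the distinguished chord $\{k,n+1\}$, and this is the classical "separator" property of outermost chords in a non-crossing pairing. An alternative route would be induction on $n$ using the product formula in Proposition \ref{pprodui} applied to $I_n(e_1\otimes\cdots\otimes e_n)\,S(h)$ (whose $n\wedge 1$ nested contractions produce exactly the terms indexed by $k=1$ and $k=n$), together with the decomposition $S(e_i)S(e_{i+1})=\text{chaotic sum}$, but the direct Wick-formula argument above is more transparent and avoids an iterative regrouping.
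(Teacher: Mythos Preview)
Your argument is correct: invoking the free moment formula $\tau(S(f_1)\cdots S(f_m))=\sum_{\pi\in NC_2(m)}\prod_{\{i,j\}\in\pi}\langle f_i,f_j\rangle$ and decomposing $NC_2(n+1)$ according to the partner of $n+1$ is exactly the standard derivation of this recursion, and your separator observation (that the chord $\{k,n+1\}$ forces any other block to lie entirely in $\{1,\ldots,k-1\}$ or entirely in $\{k+1,\ldots,n\}$) is the right combinatorial point. The paper itself does not supply a proof of this lemma; it is quoted as a known identity and used as the input for the integration-by-parts formula, so your approach is essentially the canonical one the paper is implicitly relying on.

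One minor remark on your closing parenthetical: the alternative you sketch via Proposition~\ref{pprodui} would apply to $I_n(e_1\otimes\cdots\otimes e_n)\,S(h)$, but $S(e_1)\cdots S(e_n)$ is not $I_n(e_1\otimes\cdots\otimes e_n)$ unless the $e_i$ are pairwise orthogonal; moreover the single nested contraction $f\stackrel{1}{\frown}h$ picks out only the $k=n$ term, not both $k=1$ and $k=n$. So that route would require more bookkeeping than you indicate, and you are right to prefer the direct Wick argument.
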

As a consequence, we have the fundamental integration-by-parts which will be fundamental in the paper.
\begin{proposition}(Integration-by-parts, Biane-Speicher, lemma 5.2.2 in \cite{BS})
For $F\in \mathcal{SC}_{alg}$, and $h\in L^2_{\mathbb{R}}(\mathbb{R}_+)$, we have
\begin{equation}\label{79}
    \tau(FS(h))=\tau\otimes\tau(\nabla^h F)
\end{equation}
\end{proposition}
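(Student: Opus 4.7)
The plan is to reduce to monomials and then match the Wick formula termwise with the expansion of $\nabla^h F$ under the Leibniz rule.

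First, by linearity in $F$ (both sides are linear in $F$) and by density/closability arguments already available, it suffices to verify the identity for $F = S(e_1)\cdots S(e_n)$ with $e_1,\dots,e_n \in L^2_{\mathbb{R}}(\mathbb{R}_+)$. The case $n=0$ (i.e., $F=1$) is trivial since $\tau(S(h)) = 0$ and $\nabla 1 = 0$.

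Next, I would compute $\nabla F$ explicitly. Iterating the derivation property $\nabla(AB) = A\cdot \nabla B + \nabla A \cdot B$ with the defining relation $\nabla S(e_k) = e_k \cdot 1\otimes 1$ gives, as a biprocess,
\begin{equation}
\nabla_t F = \sum_{k=1}^{n} e_k(t)\, S(e_1)\cdots S(e_{k-1}) \otimes S(e_{k+1})\cdots S(e_n),\nonumber
\end{equation}
where the left leg carries the left multiplication and the right leg the opposite multiplication. Pairing against $h \in L^2_{\mathbb{R}}(\mathbb{R}_+)$ (so $\overline{h(t)} = h(t)$) yields
\begin{equation}
\nabla^h F = \sum_{k=1}^{n} \langle e_k, h\rangle_{L^2(\mathbb{R}_+)}\; S(e_1)\cdots S(e_{k-1}) \otimes S(e_{k+1})\cdots S(e_n).\nonumber
\end{equation}
Applying the product state $\tau\otimes\tau$ turns each term into $\langle e_k,h\rangle\,\tau(S(e_1)\cdots S(e_{k-1}))\,\tau(S(e_{k+1})\cdots S(e_n))$.

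Finally, I compare with the NC Wick formula: the right-hand side of the NC Wick formula is exactly the same sum. Hence $\tau(FS(h)) = \tau\otimes\tau(\nabla^h F)$ for every monomial, and the general case follows by linearity over $\mathcal{SC}_{alg}$.

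I do not expect any serious obstacle: the argument is bookkeeping. The only subtle point is making sure the two ``legs'' of $\nabla F$ correspond to the ordering in the Wick formula (i.e., that the $k$-th term of Wick matches the $k$-th term produced by the Leibniz rule), and that the pairing convention $\langle \nabla F, h\rangle = \int \nabla_t F\,\overline{h(t)}\,dt$ is consistent with real-valued $h$. Both are immediate from the definitions.
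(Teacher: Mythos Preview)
Your proof is correct and follows exactly the approach the paper intends: the proposition is stated immediately after the NC Wick formula with the remark ``As a consequence, we have the fundamental integration-by-parts,'' and your termwise matching of the Leibniz expansion of $\nabla^h F$ against the Wick formula is precisely that consequence spelled out.
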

Since the free Malliavin gradient enjoys a chain rule, we have as an easy consequence that:
\begin{lemma}\label{80}
Suppose that $X,Y,Z\in \mathcal{SC}_{alg}$ and $h\in L^2_{\mathbb{R}}(\mathbb{R}_{+})$, then:
\begin{equation}
\tau\otimes\tau(X.\nabla^hY .Z )=-\tau\otimes\tau(\nabla^hX. YZ )-\tau\otimes\tau(XY.\nabla^h Z)+\tau(XYZS(h))
\end{equation}
\end{lemma}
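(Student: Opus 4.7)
The plan is to recognize that the identity is essentially an integration-by-parts applied to the triple product $XYZ$, combined with the derivation property of $\nabla$. Nothing deeper is needed.

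First I would apply the integration-by-parts formula (\ref{79}) with $F = XYZ \in \mathcal{SC}_{alg}$ (the algebra is closed under products), giving
\begin{equation*}
\tau(XYZ\,S(h)) \;=\; \tau\otimes\tau\bigl(\nabla^h(XYZ)\bigr).
\end{equation*}
Next I would expand the right-hand side using the Leibniz rule for the free Malliavin gradient (applied twice), which yields
\begin{equation*}
\nabla^h(XYZ) \;=\; \nabla^h(X)\cdot YZ \;+\; X\cdot\nabla^h(Y)\cdot Z \;+\; XY\cdot\nabla^h(Z),
\end{equation*}
where, as recalled in the definition of $\nabla$, the left action multiplies on the left leg of the biprocess and the right action multiplies (oppositely) on the right leg.

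Taking $\tau\otimes\tau$ of this identity term by term and substituting into the integration-by-parts formula above gives
\begin{equation*}
\tau(XYZ\,S(h))
= \tau\otimes\tau(\nabla^h X\cdot YZ) + \tau\otimes\tau(X\cdot\nabla^h Y\cdot Z) + \tau\otimes\tau(XY\cdot\nabla^h Z),
\end{equation*}
and isolating the middle term rearranges this exactly into the claimed identity.

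The only mild subtlety is bookkeeping of the two legs of the biprocess: one must verify that the Leibniz rule is compatible with pairing against $h$ (so that $\nabla^h$ itself is a derivation with the same left/right action convention), but this follows directly from the definition of $\nabla^h F = \langle \nabla F,h\rangle_{L^2(\mathbb{R}_+)}$ and the fact that the pairing is $L^2(\mathbb{R}_+)$-linear and commutes with the algebra actions on the $\mathcal{SC}\otimes\mathcal{SC}^{op}$-valued part. Apart from this routine check, there is no genuine obstacle: the lemma is a one-line consequence of integration-by-parts applied to $XYZ$.
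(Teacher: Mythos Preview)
Your proof is correct and matches the paper's approach: the paper does not spell out a proof but simply notes that the lemma is ``an easy consequence'' of the chain rule (derivation property) for the free Malliavin gradient, which is exactly what you do by applying the integration-by-parts formula (\ref{79}) to $F=XYZ$ and expanding via Leibniz.
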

In the following set of propositions, we state basic results about the action of the free Malliavin derivative on finite Wigner chaos.
\begin{proposition}(proposition 5.3.10 of \cite{BS})
$\mathbb{D}^{1,2}$ contains $\mathcal{P}_n$, and the restriction of $\nabla$ to this space is a bounded linear operator.
\end{proposition}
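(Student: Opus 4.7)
The plan is to prove both claims simultaneously by establishing the explicit identity
\[
\|\nabla I_n(f)\|_{\mathcal{B}_2}^2 = n\,\|f\|_{L^2(\mathbb{R}_+^n)}^2 \qquad \text{for every } f \in L^2(\mathbb{R}_+^n),
\]
then bootstrapping this to all of $\mathcal{P}_n = \bigoplus_{k=0}^{n}\mathcal{H}_k$ via orthogonality. First I would verify the gradient formula on simple functions. For $f = \mathbf{1}_A$ with $A = [u_1,v_1]\times\ldots\times[u_n,v_n]$ off-diagonal, $I_n(f) = (S_{v_1}-S_{u_1})\ldots(S_{v_n}-S_{u_n})$ lies in $\mathcal{SC}_{alg}$, and iterated application of the Leibniz rule, combined with $\nabla_t S(h) = h(t)\cdot 1 \otimes 1$, yields
\[
\nabla_t I_n(f) = \sum_{j=1}^{n} I_{j-1}(f_{j,t}^{<}) \otimes I_{n-j}(f_{j,t}^{>}),
\]
where $f_{j,t}^{<}$ and $f_{j,t}^{>}$ denote the restrictions of $f$ to the first $j-1$ and last $n-j$ variables with the $j$-th variable fixed to $t$. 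This formula then extends by linearity and $L^2$-density to all $f \in L^2(\mathbb{R}_+^n)$, provided the right-hand side is continuous in $f$, which is exactly what the norm computation below shows.

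Next I would compute the biprocess norm. The key observation is an orthogonality argument: the summand indexed by $j$ belongs to $\mathcal{H}_{j-1}\otimes\mathcal{H}_{n-j}$, and since the pairs $(j-1, n-j)$ are distinct as $j$ varies from $1$ to $n$, these subspaces are pairwise orthogonal inside $L^2(\mathcal{SC})\bar\otimes L^2(\mathcal{SC})$. Hence
\[
\|\nabla_t I_n(f)\|_{L^2\otimes L^2}^2 = \sum_{j=1}^{n}\|I_{j-1}(f_{j,t}^{<})\|_2^{2}\,\|I_{n-j}(f_{j,t}^{>})\|_2^{2} = \sum_{j=1}^{n}\|f_{j,t}\|_{L^2(\mathbb{R}_+^{n-1})}^{2}
\]
by the Wigner-Itô isometry. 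Integrating in $t$ and applying Fubini to each term gives the announced identity $\|\nabla I_n(f)\|_{\mathcal{B}_2}^2 = n\|f\|_{L^2}^2$.

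To conclude on $\mathcal{P}_n$, fix $Y \in \mathcal{P}_n$ and decompose $Y = \sum_{k=0}^{n} I_k(f_k)$ via the Wigner-Itô chaotic decomposition. For each $k$, pick simple functions $f_k^{(m)}\to f_k$ in $L^2(\mathbb{R}_+^k)$; the corresponding approximants $Y^{(m)}=\sum_{k=0}^{n} I_k(f_k^{(m)})$ lie in $\mathcal{SC}_{alg}$ and converge to $Y$ in $L^2$. The same orthogonality-of-chaoses argument used above, applied levelwise, gives
\[
\|\nabla(Y^{(m)}-Y^{(m')})\|_{\mathcal{B}_2}^2 = \sum_{k=0}^{n} k\,\|f_k^{(m)}-f_k^{(m')}\|_{L^2}^2 \;\longrightarrow\; 0,
\]
so $\nabla Y^{(m)}$ is Cauchy in $\mathcal{B}_2$. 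Closability of $\nabla$ identifies the limit as $\nabla Y$, placing $Y$ in $\mathbb{D}^{1,2}$. The same bound in the limit yields $\|\nabla Y\|_{\mathcal{B}_2}^2 = \sum_{k=0}^{n} k\,\|f_k\|_{L^2}^2 \leq n\,\|Y\|_2^2$, proving boundedness of $\nabla\!\restriction_{\mathcal{P}_n}$ with operator norm at most $\sqrt{n}$.

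The main technical obstacle is the orthogonality bookkeeping in the second step: I need to carefully justify that the tensor products $I_{j-1}(\cdot)\otimes I_{n-j}(\cdot)$ are mutually orthogonal inside $L^2(\mathcal{SC})\bar\otimes L^2(\mathcal{SC})$ (not just inside each factor separately), which relies on the chaos grading being preserved under the tensor product of von Neumann algebras, together with the compatibility of $\tau\otimes\tau^{op}$ with this grading. Everything else (the derivation computation, the Wigner-Itô isometry, density of simple functions) is routine once this orthogonality is in place.
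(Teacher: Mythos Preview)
The paper does not give its own proof of this proposition; it is simply recalled from Biane--Speicher \cite{BS} (their Proposition 5.3.10), so there is no argument to compare against directly. Your proof is correct and is in fact the natural one: the explicit gradient formula you derive on simple multiple integrals is precisely the content of the companion Proposition~5.3.9 of \cite{BS} (restated in the paper as Proposition~\ref{pp3}), and the orthogonality of the subspaces $\mathcal{H}_{j-1}\otimes\mathcal{H}_{n-j}$ in $L^2(\mathcal{SC})\bar\otimes L^2(\mathcal{SC})$ for distinct $j$ follows immediately from the Wigner--It\^o isometry on each tensor leg, since the bi-degrees $(j-1,n-j)$ are pairwise distinct. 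Your norm identity $\lVert\nabla I_n(f)\rVert_{\mathcal{B}_2}^2=n\lVert f\rVert_{L^2(\mathbb{R}_+^n)}^2$ is exactly the $p=1$ case of the chaotic characterization of $\mathbb{D}^{k,2,\sigma}$ that the paper later proves in Section~6, so your argument also anticipates that computation. The one point you flag as a technical obstacle---the tensor-product orthogonality---is not genuinely delicate here: since $\tau\otimes\tau^{op}$ factorises on elementary tensors and the chaoses are $\tau$-orthogonal, orthogonality in $L^2(\tau\otimes\tau^{op})$ is automatic.
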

\begin{flushleft}
We can also explicit the action of $\nabla$ on $\mathcal{P}_n$. First, we note that for any $n,m\geq 0$, the map $I_n\otimes I_m$ from ${L^2(\mathbb{R}^{n}_+)\otimes L^2(\mathbb{R}^{m}_+)}$ to $\mathcal{H}_n\otimes \mathcal{H}_m$.
By the isomorphism between ${L^2(\mathbb{R}^{n+m}_+)}$ and ${L^2(\mathbb{R}^{n}_+)\otimes L^2(\mathbb{R}^{m}_+)}$, we can see the linear extension of the map:
\begin{eqnarray}
    I_n\otimes I_m &:& L^2(\mathbb{R}^{n+m}_+) \rightarrow \mathcal{H}_n\otimes \mathcal{H}_m\nonumber\\
    &&h_1\otimes...\otimes h_{n+m} \mapsto \pi_n(S(h_1)...S(h_n))\otimes\pi_m(S(h_{n+1})...S(h_{n+m}))
\end{eqnarray}
In particular, we will define for an elementary tensor $f=h\otimes g \in  L^2(\mathbb{R}^{n}_+)\otimes L^2(\mathbb{R}^{m}_+)\simeq L^2(\mathbb{R}^{n+m}_+)$, $I_n\otimes I_m(f):=I_n(h)\otimes I_m(g)$ and then continuously extendy for general $f\in L^2(\mathbb{R}^{n+m}_+)$ viewed as an element of $L^2(\mathbb{R}^{n}_+)\otimes L^2(\mathbb{R}^{m}_+)$.
\end{flushleft}
For all $A\otimes B \in \mathcal{P}_n \otimes \mathcal{P}_n$ and $B\otimes C \in \mathcal{P}_n \otimes \mathcal{P}_n$,we denote :
$(A\otimes B)^{*}=A^*\otimes B^*$ and $(A\otimes B)\sharp (C\otimes D)=AC\otimes DB$, and we extend them by linearity for the first map which turns out to provide a map from $(\mathcal{P}_n \otimes \mathcal{P}_n)$ to $\mathcal{P}_n \otimes \mathcal{P}_n$, and by bilinearity and continuity for the second one which gives a map from $(\mathcal{P}_n \otimes \mathcal{P}_n)^2$ to $P_{2n} \otimes P_{2n}$.
\begin{flushleft}

We define for all $f\in L^2(\mathbb{R}^{n}_+)$, the function $f_t^k \in L^2(\mathbb{R}^{n-1}_+)$ by the equality for almost all $t\geq 0$ :
\begin{equation}
    f(t_1,...,t,t_{k+1},...,t_n)=f_t^k(t_1,..,t_{k-1},t_{k+1},...,t_n)\nonumber
\end{equation}
\end{flushleft}
\begin{proposition}\label{pp3}(proposition 5.3.9 of \cite{BS})
The Malliavin derivative maps $\mathcal{P}_n$ into $L^2(\mathbb{R},\mathcal{P}_n\otimes \mathcal{P}_n)$, indeed for $f \in L^2(\mathbb{R}^{n}_+)$, then for almost all $t\geq 0$ :

\begin{equation}
    \nabla_t(I_n(f))=\sum_{k=1}^n I_{k-1}\otimes I_{n-k}(f_t^k),
\end{equation}
\end{proposition}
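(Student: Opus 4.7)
The strategy is to define the candidate right-hand side as an auxiliary operator, verify it coincides with $\nabla$ on elementary tensors by induction on $n$, and then extend by continuity. Concretely, for $f \in L^2(\mathbb{R}_+^n)$ set
\begin{equation*}
\tilde{\nabla}_t I_n(f) := \sum_{k=1}^n I_{k-1}\otimes I_{n-k}(f_t^k).
\end{equation*}
A routine use of the Wigner-Ito isometry on each factor shows that $\tilde{\nabla}$ is a bounded linear map from $\mathcal{H}_n$ to $L^2(\mathbb{R}_+,\, \mathcal{H}^{\otimes(n-1)})$ (identifying $\mathcal{H}_{k-1}\otimes\mathcal{H}_{n-k}$ with $\mathcal{H}^{\otimes(n-1)}$). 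Since elementary tensors $h_1\otimes\cdots\otimes h_n$ with $h_i\in L^2_{\mathbb{R}}(\mathbb{R}_+)$ are total in $L^2(\mathbb{R}_+^n)$, it suffices to check the identity $\nabla_t I_n(f) = \tilde{\nabla}_t I_n(f)$ for such $f$.

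\textbf{Induction on $n$.} The cases $n=0,1$ are immediate from $\nabla_t S(h) = h(t)\cdot 1\otimes 1$. Assume the formula on $\mathcal{P}_{n-1}$ and $\mathcal{P}_{n-2}$. Using Proposition \ref{pprodui} with $m=1$ rewrite
\begin{equation*}
I_n(h_1\otimes\cdots\otimes h_n) = S(h_1)\, I_{n-1}(h_2\otimes\cdots\otimes h_n) - \langle h_1,h_2\rangle\, I_{n-2}(h_3\otimes\cdots\otimes h_n).
\end{equation*}
Applying $\nabla_t$ via the derivation property gives
\begin{equation*}
\nabla_t I_n(h_1\otimes\cdots\otimes h_n) = h_1(t)\cdot 1\otimes I_{n-1}(h_2\otimes\cdots\otimes h_n) + S(h_1)\cdot \nabla_t I_{n-1}(h_2\otimes\cdots\otimes h_n) - \langle h_1,h_2\rangle \nabla_t I_{n-2}(h_3\otimes\cdots\otimes h_n).
\end{equation*}
The induction hypothesis expresses the two remaining derivatives as sums of the desired type. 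In the middle term, I would then reapply Proposition \ref{pprodui} in each summand to absorb $S(h_1)$ into the left leg: for $j\geq 3$,
\begin{equation*}
S(h_1)\,I_{j-2}(h_2\otimes\cdots\otimes h_{j-1}) = I_{j-1}(h_1\otimes\cdots\otimes h_{j-1}) + \langle h_1,h_2\rangle\, I_{j-3}(h_3\otimes\cdots\otimes h_{j-1}),
\end{equation*}
with the $j=2$ term handled separately as $S(h_1)\cdot 1 = I_1(h_1)$.

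\textbf{Key cancellation and conclusion.} The crucial point is that the parasitic contribution arising from the ``$\langle h_1,h_2\rangle\,I_{j-3}$'' summand (when aggregated over $j\geq 3$) reconstructs precisely $\langle h_1,h_2\rangle\, \tilde{\nabla}_t I_{n-2}(h_3\otimes\cdots\otimes h_n)$, which cancels the last term coming from the product-formula correction. What remains is exactly
\begin{equation*}
\sum_{k=1}^n h_k(t)\, I_{k-1}(h_1\otimes\cdots\otimes h_{k-1})\otimes I_{n-k}(h_{k+1}\otimes\cdots\otimes h_n) = \tilde{\nabla}_t I_n(h_1\otimes\cdots\otimes h_n),
\end{equation*}
proving equality on elementary tensors. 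By linearity, density of simple tensors in $L^2(\mathbb{R}_+^n)$, and joint continuity of $\nabla$ and $\tilde{\nabla}$ on $\mathcal{H}_n$, the identity extends to all $f\in L^2(\mathbb{R}_+^n)$.

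\textbf{Main obstacle.} The computation itself is elementary, but the bookkeeping of the cancellation between the ``contraction correction'' in the decomposition of $I_n(h_1\otimes\cdots\otimes h_n)$ and the contractions produced by absorbing $S(h_1)$ into the induction hypothesis is the delicate point; a sign error or misindexing would make the telescoping fail. The cleanest way to organize it is to keep $h_1$ fixed as the distinguished variable and peel off one integral at a time via Proposition \ref{pprodui}, so that the combinatorial identity reduces to checking that a single pair of lower-order terms cancels at each step.
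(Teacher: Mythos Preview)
The paper does not supply its own proof of this proposition; it is stated as proposition 5.3.9 of Biane--Speicher and cited without argument. Your inductive proof via the recursion $I_n(h_1\otimes\cdots\otimes h_n) = S(h_1)I_{n-1}(h_2\otimes\cdots\otimes h_n) - \langle h_1,h_2\rangle I_{n-2}(h_3\otimes\cdots\otimes h_n)$ together with the derivation property is correct, and the cancellation you describe goes through exactly as claimed.

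Two minor remarks. First, you invoke Proposition~\ref{pprodui} (the product formula); in \emph{this} paper that formula is given a new proof in Section~8 relying on the higher-order derivative computations, which in turn rest on Proposition~\ref{pp3}. This is not a genuine circularity because both results are imported from \cite{BS}, where the product formula (5.3.3) is established prior to and independently of the Malliavin derivative (5.3.9), but it is worth being explicit about which version of the product formula you are citing. Second, for the extension step you should phrase it via the \emph{closability} of $\nabla$ (part of the definition of $\mathbb{D}^{1,2}$) combined with the explicit bound $\|\tilde{\nabla} I_n(f)\|_{\mathcal{B}_2}^2 = n\|f\|_{L^2(\mathbb{R}_+^n)}^2$ on your candidate operator, rather than appealing to boundedness of $\nabla$ on $\mathcal{P}_n$, since the latter is essentially a corollary of the formula you are proving.
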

\begin{definition}
We denote $\delta: dom(\delta)\subset \mathcal{B}_2\rightarrow L^2(\mathcal{SC})$ the adjoint of the free Malliavin derivative which is a densely defined and closable operator called the {\it free Skorohod integral}, the domain of its closure (still denoted $\delta$) will be denoted as $dom(\delta)$ and it's characterized by the duality relation: $u\in dom(\delta)$, if there exists a (unique) element denoted $\delta(u)\in L^2(\mathcal{SC})$ such that for all $F\in \mathcal{SC}_{alg}$:
\begin{equation}
\langle \nabla F,u\rangle_{\mathcal{B}_2}=\langle F,\delta(u)\rangle_{L^2(\mathcal{SC})},
\end{equation}
\end{definition}
We also restate for convenience the infinite dimensional versions of Voiculescu formulas for the free Skorohod integral (proposition 3.9 in \cite{V}) which can be thus thought as the free analog of the formula
\newline
$\delta(Fh)=F.W(h)-\langle DF,h\rangle$ with $F\in \mathbb{D}^{h,2}$ in the classical case (c.f Nualart proposition 1.3.4 \cite{Nual}).
\begin{proposition}\label{ppV}
Let $F\in \mathcal{SC}_{alg}^{\odot 2}$ and $h\in L^2_{\mathbb{R}}(\mathbb{R}_+)$, then $F.h$ belongs to $dom(\delta)$ and:
\begin{eqnarray}
    \delta(F.h)&=&F\sharp S(h)-m_1\circ(id\otimes \tau\otimes id)(\langle \tilde{\nabla}F,h\rangle_{L^2(\mathbb{R}_+)})\nonumber\\
    &=&F\sharp S(h)-m_1\circ (id\otimes \tau\otimes id)(\langle(\nabla \otimes id +id \otimes \nabla)F,h\rangle_{L^2(\mathbb{R}_+)})\nonumber
\end{eqnarray}
where $m_1$ denotes the linear extension of the multiplication from $\mathcal{SC}_{alg}\otimes \mathcal{SC}_{alg}$ into $\mathcal{SC}_{alg}$ given by:
\begin{equation}
    m_1(a\otimes b)=ab
\end{equation}
\end{proposition}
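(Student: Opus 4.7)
My plan is to verify the formula directly from the duality
$\langle \nabla G, Fh\rangle_{\mathcal{B}_2} = \tau(G^*\delta(Fh))$ for $G \in \mathcal{SC}_{alg}$
that characterizes the free Skorohod integral. By bilinearity it suffices to treat an elementary tensor $F = A \otimes B$ with $A,B \in \mathcal{SC}_{alg}$. For such $F$ one has $F \sharp S(h) = A S(h) B$, and a brief unfolding of $m_1 \circ (id \otimes \tau \otimes id)$ applied to $\langle \tilde{\nabla} F, h\rangle_{L^2(\mathbb{R}_+)} = \nabla^h A \otimes B + A \otimes \nabla^h B$ shows that the corrective term reduces to $(\tau \otimes id)(\nabla^h A)\cdot B + A\cdot (id \otimes \tau)(\nabla^h B)$; the second equality displayed in the statement is then just the unfolding of $\tilde\nabla = \nabla \otimes id + id \otimes \nabla$.

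The heart of the argument is to produce this correction term by applying Proposition \ref{79} in reverse. Starting from $\tau(G^* \cdot A S(h) B)$, trace cyclicity rewrites it as $\tau((BG^*A)\cdot S(h))$, which by the integration-by-parts identity equals $\tau\otimes\tau(\nabla^h(BG^*A))$. The derivation property of $\nabla$ then splits this quantity into three pieces,
\begin{equation*}
\tau\otimes\tau(\nabla^h B \cdot G^* A) + \tau\otimes\tau(B \cdot \nabla^h G^* \cdot A) + \tau\otimes\tau(BG^* \cdot \nabla^h A).
\end{equation*}

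The middle piece is, after identifying $\nabla_t(G^*)$ with the biprocess flip of $(\nabla_t G)^*$, exactly the pairing $\langle \nabla G, Fh\rangle_{\mathcal{B}_2}$ inherited from the $\tau \otimes \tau^{op}$ inner product on $\mathcal{SC}\otimes \mathcal{SC}^{op}$. Trace cyclicity recasts the two outer pieces as $\tau(G^* \cdot A\,(id \otimes \tau)(\nabla^h B))$ and $\tau(G^* \cdot (\tau \otimes id)(\nabla^h A)\,B)$, whose sum coincides with $\tau(G^* \cdot m_1(id\otimes\tau\otimes id)(\langle \tilde\nabla F, h\rangle))$. Transposing this term to the other side yields simultaneously that $F\cdot h \in dom(\delta)$ and the claimed expression for $\delta(F\cdot h)$.

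The only real difficulty is the bookkeeping: one must keep straight (i) that the involution on $\mathcal{SC} \otimes \mathcal{SC}^{op}$ reverses the order of tensor legs, so that $\nabla(G^*)$ is the leg-flip of $(\nabla G)^*$; (ii) that the biprocess inner product factorizes as $\tau(\cdot)\tau(\cdot)$ with a cyclicity swap on the second leg; and (iii) the left and right actions of $\mathcal{SC}$ on biprocesses that enter the Leibniz rule. None of these points is deep, but the formula is sensitive to all of them, and they must align so that the $\sharp$-product and the $m_1(id\otimes\tau\otimes id)$-correction fall out exactly as written.
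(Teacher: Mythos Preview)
Your approach is correct and is in fact the standard route to this identity: pair against a test functional $G\in\mathcal{SC}_{alg}$, rewrite $\tau(G^*\,F\sharp S(h))$ via trace cyclicity as $\tau((BG^*A)S(h))$, invoke the integration-by-parts \eqref{79}, and expand $\nabla^h(BG^*A)$ by the Leibniz rule into one ``inner'' piece (which produces $\langle \nabla G, Fh\rangle_{\mathcal{B}_2}$) and two ``outer'' pieces (which assemble into the $m_1\circ(id\otimes\tau\otimes id)$ correction). This is exactly how one proves Voiculescu's formula in the finitely generated setting.

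The paper, however, does not give a proof of this proposition at all: it is stated as a restatement of Voiculescu's proposition~3.9 in \cite{V} (see the sentence preceding the proposition and the remark following it), and the reader is referred to section~4 of \cite{V} for the argument in the free-difference-quotient setting. So your write-up supplies what the paper deliberately omits; the arguments are the same in spirit, yours simply spells out the infinite-dimensional version on $\mathcal{SC}_{alg}$.

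One small bookkeeping slip: when you unfold $m_1\circ(id\otimes\tau\otimes id)$ on $\nabla^hA\otimes B + A\otimes\nabla^hB$, the result is $(id\otimes\tau)(\nabla^hA)\cdot B + A\cdot(\tau\otimes id)(\nabla^hB)$, not the version with $(\tau\otimes id)$ and $(id\otimes\tau)$ interchanged that you wrote. This is precisely the kind of leg-ordering issue you yourself flag in point~(ii), and it does not affect the structure of the argument --- the same computation of the outer pieces $\tau\otimes\tau(BG^*\cdot\nabla^hA)$ and $\tau\otimes\tau(\nabla^hB\cdot G^*A)$ lands on the correct expression once the traces are placed on the right legs.
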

\begin{remark}
This lat proposition is basically the infinite dimensional version on the Wigner space of the well known Voiculescu formulas applied to the free difference quotient in a semicircular system (c.f section 4 in \cite{V}). We will however prefer to denotes the last term in the second way because it is better fitted to the usual conventions used in Voiculescu formulas and similar others works.
\end{remark}
\begin{flushleft}
We now state the chain rule of the free Malliavin derivative:
\end{flushleft}

\begin{proposition}\label{pp5}
For all $F\in \mathcal{P}_d$, and for all $P\in \mathbb{C}[X]$
\begin{equation}
    \nabla_tP(F)=\partial P(F)\sharp \nabla_t(F),\nonumber
\end{equation}
and for the multivariate case, we have for all $(F_1,...,F_n)$ with each $ F_i\in \mathcal{P}_d$ and $P\in \mathbb{C}\langle F_1,\ldots,F_n\rangle $ :
\begin{equation}
    \nabla_t (P(F))=\sum_{k=1}^n \partial_k P(F)\sharp \nabla_t F_k,\nonumber
\end{equation}
\end{proposition}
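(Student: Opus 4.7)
The plan is to exploit the universal property of free difference quotients: both maps $P \mapsto \nabla_t P(F)$ and $P \mapsto (\partial P)(F)\sharp\nabla_t F$ are linear maps from $\mathbb{C}[X]$ (resp.\ $\mathbb{C}\langle X_1,\ldots,X_n\rangle$) into the $(\mathcal{SC},\mathcal{SC})$--bimodule $\mathcal{SC}\otimes\mathcal{SC}^{op}$, with the bimodule action $A\cdot(u\otimes v)\cdot B=Au\otimes vB$. Once one verifies that both are derivations with respect to this bimodule structure and that they agree on the generators, uniqueness of a derivation on a free algebra determined by its values on generators forces them to coincide.

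I would first handle the case $F_1,\ldots,F_n\in \mathcal{SC}_{alg}$ by induction on the total degree of the word $P$. The base case $P=X_j$ is immediate because $\partial_k X_j=\delta_{jk}\,1\otimes 1$, so $(1\otimes 1)\sharp\nabla_t F_j=\nabla_t F_j$. For the inductive step on a product $P=QR$, the derivation property of $\nabla$ built into its defining axioms yields
\begin{equation*}
\nabla_t\bigl(Q(F)R(F)\bigr)=Q(F)\cdot\nabla_t R(F)+\nabla_t Q(F)\cdot R(F),
\end{equation*}
while Voiculescu's Leibniz rule $\partial_k(QR)=\partial_k Q\cdot(1\otimes R)+(Q\otimes 1)\cdot\partial_k R$, combined with the inductive hypothesis and the definition of $\sharp$, produces precisely the same expression after applying $\sharp\nabla_t F_k$ and summing over $k$. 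Linearity then extends the identity to all polynomial $P$.

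It remains to pass from $F_j\in \mathcal{SC}_{alg}$ to $F_j\in\mathcal{P}_d$. For each $j$ I would approximate $F_j$ in $L^2$ by a sequence $(F_{j,m})_m\subset\mathcal{SC}_{alg}\cap\mathcal{P}_d$; by Theorem \ref{33}, $L^2$--convergence on $\mathcal{P}_d$ is automatically operator-norm convergence, so $P(F_m)\to P(F)$ in both norms. Since $P(F_m),P(F)\in\mathcal{P}_{d\cdot\deg P}$, on which $\nabla$ is bounded (Proposition 5.3.10 of \cite{BS}), the left-hand sides converge in the biprocess $L^2$-norm. On the right, $(\partial_k P)(F_m)\to(\partial_k P)(F)$ in operator norm by polynomial continuity, $\nabla_t F_{k,m}\to\nabla_t F_k$ in the biprocess norm, and $\sharp$ is continuous with respect to the tensor product of these topologies, so the identity persists in the limit.

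The main difficulty is less conceptual than notational: one must rigorously align the bimodule conventions for $\nabla$ (left/right multiplication acting on the left/right leg, in the notation of the paper) with those implicit in the $\sharp$--product and in the Leibniz rule for $\partial_k$, since a single leg mismatch ruins the inductive step. Once the conventions are pinned down, the argument essentially reduces, as Voiculescu already observed in the semicircular setting (cf.\ the discussion preceding Proposition 4.5 in \cite{V}), to checking that both sides satisfy the same universal property.
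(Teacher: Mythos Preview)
The paper states this proposition without proof; it is treated as a routine consequence of the defining derivation property of $\nabla$ on $\mathcal{SC}_{alg}$ together with the boundedness of $\nabla$ on finite chaos (Proposition 5.3.10 of \cite{BS}). Your argument supplies exactly the details one would expect to fill in: induction on the word length to obtain the identity on $\mathcal{SC}_{alg}$, followed by an approximation from $\mathcal{SC}_{alg}\cap\mathcal{P}_d$ to $\mathcal{P}_d$ using the Haagerup--Biane--Speicher bound (Theorem \ref{33}) to upgrade $L^2$-convergence to operator-norm convergence, so that polynomial evaluations, $\partial_k P$, and the $\sharp$-action all pass to the limit. This is correct and is the natural route.

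One small remark on the approximation step: you need $(\partial_k P)(F_m)\to(\partial_k P)(F)$ in the operator norm of $\mathcal{SC}\otimes\mathcal{SC}^{op}$, not just in $L^2(\tau\otimes\tau)$, since $\sharp$ is only jointly continuous for (operator norm) $\times$ ($L^2$-biprocess norm). This is indeed what Theorem \ref{33} delivers, because $\partial_k P$ is a finite sum of elementary tensors of polynomials and each factor converges in operator norm; it is worth making explicit that the tensor legs are handled separately. With that clarification the proof is complete.
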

We first introduce the class of functions (a subspace of {\it operator-Lipschitz functions}) which can be seen as a "{good}" subset of noncommutative functions to deduce some chain rules in free stochastic analysis: a free Ito formula and a chain rule for the free Malliavin derivative (we do not investigate further here this line of topic which have recently known important achievements: see Nikitopoulos\cite{EN}, or Jekel, Li and Shlyakhtenko and \cite{JWS}). Note also that this class of functions is well approximated by polynomials onto compact interval (c.f theorem 3.20 in \cite{KNPS}).
\begin{definition}
We define the following set of smooth $L^2$-functions as:
\begin{equation}
    \mathcal{I}_2(f)=\left\{f:\mathbb{R}\rightarrow \mathbb{C}, f(x)=\int_{\mathbb{R}} e^{ix}\mu(dy),\int_{\mathbb{R}}\lvert y\rvert^2\lvert\mu\rvert(dy)<\infty,\right\},
    \end{equation}
    where $\mu$ is a complex valued measure on $\mathbb{R}$.
    \end{definition}
\begin{proposition}(generalized chain rule, Biane-Speicher)
Let $F\in \mathbb{D}^{1,2}$ and $\phi\in \mathcal{I}_2(f)$, then $\phi(F)\in \mathbb{D}^{1,2}$ and we have for almost all $t\geq 0$:
\begin{equation}
\nabla_t\phi(F)=\partial \phi(F)\sharp \nabla_tF
\end{equation}

\end{proposition}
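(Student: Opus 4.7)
\emph{Proof strategy.} The plan is to reduce the statement to the case of complex exponentials $\phi_y(x)=e^{iyx}$, obtain the chain rule for each $\phi_y$ via the polynomial chain rule of Proposition \ref{pp5}, and then reconstitute $\phi$ and its derivative by integrating against the representing measure $\mu$. The key formal identity one expects, which guides every step, is
\begin{equation*}
\partial \phi(F)=\int_{\mathbb{R}} iy\int_0^1 e^{iysF}\otimes e^{iy(1-s)F}\,ds\,\mu(dy),
\end{equation*}
obtained by differentiating $e^{iyx}$ via the free difference quotient; all the technical work consists in making the various passages to the limit legal in $\mathbb{D}^{1,2}$.

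First I would treat exponentials on $\mathcal{SC}_{alg}$. Fix $y\in\mathbb{R}$ and $F\in\mathcal{SC}_{alg}$ self-adjoint, set $P_N(x)=\sum_{k=0}^N (iyx)^k/k!$; then $P_N(F)\to e^{iyF}$ in operator norm, hence in $L^2(\mathcal{SC})$. Proposition \ref{pp5} gives $\nabla_t P_N(F)=\partial P_N(F)\sharp \nabla_t F$. Using the formula $\partial x^n=\sum_{k=0}^{n-1}x^k\otimes x^{n-1-k}$ and reorganizing the double sum $\sum_{k+\ell=n-1}$ as a Riemann sum in $s\in[0,1]$, one verifies that $\partial P_N(F)\to iy\int_0^1 e^{iysF}\otimes e^{iy(1-s)F}\,ds$ in the operator norm on $\mathcal{SC}\otimes\mathcal{SC}^{op}$, with a uniform bound of modulus $|y|$ (since each factor is a unitary). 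Since the $\sharp$-action is contractive with respect to the operator norm on the left factor times the $L^2$-norm on the right, $\partial P_N(F)\sharp\nabla_t F$ converges in $\mathcal{B}_2$ to $\partial\phi_y(F)\sharp\nabla_t F$, and closedness of $\nabla$ yields $e^{iyF}\in\mathbb{D}^{1,2}$ with $\nabla_t e^{iyF}=\partial\phi_y(F)\sharp\nabla_t F$. Extension to general $F\in\mathbb{D}^{1,2}$ then follows by approximating $F$ by $F_n\in\mathcal{SC}_{alg}$ in $\|\cdot\|_{1,2}$ and again invoking closability, using that $e^{iyF_n}\to e^{iyF}$ in $L^2$ (by continuity of the bounded continuous functional calculus on self-adjoints) and that $\partial\phi_y(F_n)\sharp\nabla_t F_n\to\partial\phi_y(F)\sharp\nabla_t F$ in $\mathcal{B}_2$ by the same operator-norm contractivity.

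Second, I would integrate against $\mu$. The core estimate
\begin{equation*}
\|\partial\phi_y(F)\sharp\nabla F\|_{\mathcal{B}_2}\leq |y|\,\|\nabla F\|_{\mathcal{B}_2}
\end{equation*}
combined with Cauchy--Schwarz gives $\int|y|\,|\mu|(dy)\leq \bigl(|\mu|(\mathbb{R})\int|y|^2|\mu|(dy)\bigr)^{1/2}<\infty$, so the Bochner integral $\int\partial\phi_y(F)\sharp\nabla F\,\mu(dy)$ converges absolutely in $\mathcal{B}_2$. Writing $\phi(F)=\int e^{iyF}\mu(dy)$ as a Bochner integral in $L^2(\mathcal{SC})$ and applying closedness of $\nabla$ under this integral (justified by the dominated $\mathcal{B}_2$-bound just obtained) yields $\phi(F)\in\mathbb{D}^{1,2}$ with the desired formula.

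The main obstacle will be the uniform control of the tensor-valued approximations $\partial P_N(F)$ in a norm strong enough that $\sharp\nabla_t F$ converges in $\mathcal{B}_2$, simultaneously uniformly in $y$ so that one may integrate against $\mu$ afterwards. This is precisely where the second-moment hypothesis on $\mu$ enters: the operator-norm bound on $\partial\phi_y(F)$ grows linearly in $|y|$, and exchange of $\int\mu(dy)$ with the closed operator $\nabla$ requires this linear bound to be $|\mu|$-integrable, which in turn is Cauchy--Schwarz applied to $\int|y|^2|\mu|(dy)<\infty$.
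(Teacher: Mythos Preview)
The paper does not actually supply a proof of this proposition: it is stated and attributed to Biane--Speicher \cite{BS}, with no argument given. So there is no ``paper's own proof'' to compare against. Your outline --- reduce to the exponentials $e^{iyF}$, prove the chain rule for those by polynomial approximation and the closability of $\nabla$, then integrate against the representing measure $\mu$ using the linear-in-$|y|$ operator-norm bound on $\partial\phi_y(F)$ --- is precisely the route taken in \cite{BS}, and your identification of where the $\mathcal{I}_2$ hypothesis is used (integrability of $|y|$ against $|\mu|$ via Cauchy--Schwarz) is correct.

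One point to tighten in the extension step from $F\in\mathcal{SC}_{alg}$ to general self-adjoint $F\in\mathbb{D}^{1,2}$: you invoke ``operator-norm contractivity'' to pass $\partial\phi_y(F_n)\sharp\nabla F_n\to\partial\phi_y(F)\sharp\nabla F$ in $\mathcal{B}_2$, but you do \emph{not} have operator-norm convergence of $e^{iysF_n}$ to $e^{iysF}$ --- only strong (equivalently, $L^2$) convergence of these unitaries. What actually carries the argument is that the $\sharp$-action of $\mathcal{SC}\otimes\mathcal{SC}^{op}$ on $\mathcal{B}_2$ is strongly continuous on norm-bounded sets, together with the uniform bound $\|\partial\phi_y(F_n)\|_{\infty}\leq |y|$ and $\nabla F_n\to\nabla F$ in $\mathcal{B}_2$. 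With that adjustment the argument is complete.
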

\begin{theorem}(Biane, Speicher, proposition 5.3.12 in\cite{BS})\label{BS}
Let $F\in \mathbb{D}^{1,2}$, then we have the following Clark-Ocone-Bismut formula:
\begin{equation}
    F=\tau(F)+\delta(\Gamma\circ \nabla F)
\end{equation}
where $\Gamma$ denotes the orthogonal projection from $\mathcal{B}_{2}$ onto the square integrable adapted biprocesses $\mathcal{B}_{2}^a$ and $\delta$ the Skorohod integral (the adjoint of the Malliavin derivative) and the integration is understand as a free stochastic integral with respect to the free Brownian motion of an adapted biprocesses.
\end{theorem}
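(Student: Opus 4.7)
My plan is to reduce the identity to the case of a finite Wigner chaos and then to conclude by density. Each operator in the formula is continuous with respect to the relevant norms: $\nabla : \mathbb{D}^{1,2} \to \mathcal{B}_2$ is bounded by definition of the graph norm $\|\cdot\|_{1,2}$; the orthogonal projection $\Gamma$ onto the closed subspace $\mathcal{B}_2^a \subset \mathcal{B}_2$ of adapted biprocesses has norm one; and the restriction of the free Skorohod integral $\delta$ to $\mathcal{B}_2^a$ coincides with the free It\^o integral and is an $L^2$-isometry on its image. Since $\bigcup_N \mathcal{P}_N = \bigoplus_n \mathcal{H}_n$ is dense in $\mathbb{D}^{1,2}$ (it contains $\mathcal{SC}_{alg}$), it suffices to prove $F = \tau(F) + \delta(\Gamma \nabla F)$ for each $F \in \mathcal{H}_n$. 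The case $n = 0$ is trivial; fix $n \geq 1$ and write $F = I_n(f)$, for which $\tau(F) = 0$.

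By Proposition \ref{pp3}, $\nabla_t F = \sum_{k=1}^n (I_{k-1} \otimes I_{n-k})(f_t^k)$. A direct calculation using the freeness of the post-$t$ Brownian increments from $\mathcal{A}_t$ and their centredness shows that the conditional expectation $\mathbb{E}_t := \tau(\,\cdot\,|\mathcal{A}_t)$ acts on Wigner integrals by $\mathbb{E}_t(I_m(g)) = I_m(g \cdot \mathds{1}_{[0,t]^m})$. The orthogonal projection $\Gamma_t$ onto $L^2(\mathcal{A}_t) \otimes L^2(\mathcal{A}_t)$ acts pointwise as $\mathbb{E}_t \otimes \mathbb{E}_t$ on the Hilbert tensor product, hence
\begin{equation*}
\Gamma_t(\nabla_t F) = \sum_{k=1}^n (I_{k-1} \otimes I_{n-k})\bigl(f(s_1,\ldots,s_{k-1},t,s_{k+1},\ldots,s_n)\,\mathds{1}_{\{s_i \leq t,\, i \neq k\}}\bigr).
\end{equation*}

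I would close the argument by duality. For any $G = I_m(g) \in \mathcal{H}_m$, the defining duality of $\delta$ and the self-adjointness of $\Gamma$ give
\begin{equation*}
\langle G, \delta(\Gamma \nabla F) \rangle_{L^2(\mathcal{SC})} = \langle \nabla G, \Gamma \nabla F \rangle_{\mathcal{B}_2} = \langle \Gamma \nabla G, \Gamma \nabla F \rangle_{\mathcal{B}_2}.
\end{equation*}
Chaos orthogonality of the summands $\mathcal{H}_{k-1} \otimes \mathcal{H}_{n-k}$ against $\mathcal{H}_{j-1} \otimes \mathcal{H}_{m-j}$ forces this to vanish when $m \neq n$, matching $\tau(G^*F) = 0$. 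If $m = n$, the Wigner-It\^o isometry on the tensor space together with Fubini yields
\begin{equation*}
\langle \Gamma \nabla G, \Gamma \nabla F \rangle_{\mathcal{B}_2} = \sum_{k=1}^n \int_{\mathbb{R}_+^n} \overline{g(t_1,\ldots,t_n)}\, f(t_1,\ldots,t_n)\, \mathds{1}_{\{t_i \leq t_k,\, i \neq k\}}\, dt_1\cdots dt_n,
\end{equation*}
and the $n$ indicators sum to one off the measure-zero diagonal $\{t_i = t_j \text{ for some } i \neq j\}$, producing $\langle g, f \rangle_{L^2(\mathbb{R}_+^n)} = \tau(G^*F)$. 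Density of $\bigoplus_m \mathcal{H}_m$ in $L^2(\mathcal{SC})$ forces $\delta(\Gamma \nabla F) = F$, completing the chaos case.

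The main technical hurdle is the pointwise identification $\Gamma_t = \mathbb{E}_t \otimes \mathbb{E}_t$ on the chaos factors, together with the formula $\mathbb{E}_t(I_m(g)) = I_m(g \cdot \mathds{1}_{[0,t]^m})$; these are standard features of the semicircular filtration but must be handled attentively given the absence of symmetrization in the free setting. The combinatorial core is then the observation that the $n$ regions $\{t_k = \max_i t_i\}$ partition $\mathbb{R}_+^n$ up to a null set---precisely the mechanism by which $\delta$ recombines the $n$ terms of $\nabla_t F$ back into the single multiple integral $F$.
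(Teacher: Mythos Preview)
The paper does not supply its own proof of this theorem; it is stated with attribution to Biane and Speicher (proposition 5.3.12 in \cite{BS}) and used as a black box. Your argument is therefore not being compared against anything in the present paper, but it is worth assessing on its own merits.

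Your proof is correct and is in fact essentially the standard route to the Clark--Ocone formula on the Wigner space. The reduction to a fixed homogeneous chaos by the continuity of $\nabla$, $\Gamma$, and $\delta|_{\mathcal{B}_2^a}$ is sound; the identification $\Gamma_t = \mathbb{E}_t \otimes \mathbb{E}_t$ together with the formula $\mathbb{E}_t I_m(g) = I_m(g\,\mathds{1}_{[0,t]^m})$ is indeed the right description of the adapted projection (and is proved in the paper as Lemma~\ref{lder} in the special case $\mathcal{A} = [0,t]$). The duality computation is clean: orthogonality of the tensor-chaos components $\mathcal{H}_{k-1}\otimes\mathcal{H}_{n-k}$ kills the cross terms, and the combinatorial heart of the matter---that the regions $\{t_k = \max_i t_i\}$ partition $\mathbb{R}_+^n$ up to a Lebesgue-null set---is exactly what makes the $n$ pieces of $\nabla F$ reassemble into $F$ under $\delta$. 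One small point of presentation: you might make explicit that $\Gamma\nabla F \in \mathcal{B}_2^a \subset \operatorname{dom}(\delta)$ before invoking the duality, since the free It\^o isometry on adapted biprocesses is what guarantees this inclusion.
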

\begin{flushleft}
As a straightforward corollary, we have the following representation in fixed finite Wigner chaos:
\end{flushleft}
\begin{theorem}\label{proj}
Fix $n\geq 0$, then we have:
\begin{equation}
id_{|\mathcal{P}_n}=P_1+\delta (\Gamma\circ\nabla)_{|{\mathcal{P}_n}}
\end{equation}
where $P_1$ denotes the orthogonal projection onto $\mathbb{C}.1\subset L^2(\mathcal{SC})$.
\end{theorem}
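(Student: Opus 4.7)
The plan is to simply unpack the Clark-Ocone-Bismut formula (Theorem \ref{BS}) on the finite Wigner chaos $\mathcal{P}_n$. First I would note that, by Proposition 5.3.10 of \cite{BS} (cited just above in the excerpt), we have the inclusion $\mathcal{P}_n \subset \mathbb{D}^{1,2}$, so every $F \in \mathcal{P}_n$ lies in the domain of $\nabla$ and the Clark-Ocone-Bismut formula is directly applicable.

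Next I would identify the constant term. For $F \in L^2(\mathcal{SC})$, the orthogonal projection onto $\mathbb{C} \cdot 1$ is
\begin{equation}
P_1 F = \langle F, 1 \rangle_\tau \cdot 1 = \tau(F) \cdot 1,\nonumber
\end{equation}
since $\|1\|_\tau = 1$ and the inner product was defined by $\langle x, y\rangle_\tau = \tau(y^* x)$. Thus $\tau(F)$ (viewed as a scalar multiple of $1 \in \mathcal{SC}$) coincides with $P_1 F$.

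Now, applying Theorem \ref{BS} to $F \in \mathcal{P}_n \subset \mathbb{D}^{1,2}$ yields
\begin{equation}
F = \tau(F) + \delta(\Gamma \circ \nabla F) = P_1 F + \delta(\Gamma \circ \nabla F),\nonumber
\end{equation}
which is exactly the claimed identity once we restrict both sides to $\mathcal{P}_n$. No real obstacle appears here: the nontrivial content has already been built into Theorem \ref{BS} (the Clark-Ocone-Bismut representation) and into Proposition 5.3.10 (which guarantees $\Gamma \circ \nabla F \in \mathcal{B}_2^a$ lies in $\mathrm{dom}(\delta)$, so that the stochastic integral on the right-hand side makes sense). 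The only minor point worth writing out cleanly is the identification $\tau(F) = P_1 F$, which is a direct consequence of the definition of the GNS inner product recalled in the Preliminaries.
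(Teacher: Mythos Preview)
Your proposal is correct and matches the paper's approach exactly: the paper presents this theorem as a ``straightforward corollary'' of the Clark-Ocone-Bismut formula (Theorem~\ref{BS}) and gives no separate proof, so your unpacking of that corollary---checking $\mathcal{P}_n\subset\mathbb{D}^{1,2}$ and identifying $\tau(F)$ with $P_1F$---is precisely what is intended.
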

\begin{theorem}Heinseberg commutation relation (Biane, Speicher proposition 5.4.1 in \cite{BS})
\newline
For $U\in dom(\delta)$ with finite chaotic expansion (more generally for $u\in \mathbb{L}^{1,2}$), we have the following commutation relation between Skorokhod integral and Malliavin derivative:
\begin{equation}
\nabla_t(\delta(U))=\nabla_t(\delta(U))+\delta_s(\nabla_t(U_s))
\end{equation}
where the subscript $\delta_s$ indicates that the Skorohod integration acts in the variable $s$ and on biprocess, we set in this case $\nabla_t(A_s\otimes B_s)=\nabla_t A_s\otimes B_s+ A_s\otimes \nabla_t A_s$
\end{theorem}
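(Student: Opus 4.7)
The plan is to establish the Heisenberg-type commutation relation
$\nabla_t(\delta(U)) = U_t + \delta_s(\nabla_t U_s)$ (the first term on the right-hand side of the statement as written is evidently a typographical slip for $U_t$). First, by linearity and the closability of $\nabla$ and $\delta$, together with density of finite-chaos biprocesses in $\mathbb{L}^{1,2}$, it suffices to prove the identity on elementary tensor biprocesses of the form $U_s = (A\otimes B)\,h(s)$, where $A = I_{n-1}(f)$, $B = I_{m-1}(g)$ and $h\in L^2_{\mathbb{R}}(\mathbb{R}_+)$; any finite-chaos biprocess is a finite linear combination of such elements, and the general $u\in\mathbb{L}^{1,2}$ case follows afterwards by approximation.

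Next I would compute $\delta(U)$ using Voiculescu's formula (Proposition \ref{ppV}):
\begin{equation*}
\delta(U) = A\,S(h)\,B - m_1\circ(\mathrm{id}\otimes\tau\otimes\mathrm{id})\bigl(\bigl\langle (\nabla\otimes\mathrm{id}+\mathrm{id}\otimes\nabla)(A\otimes B),\,h\bigr\rangle_{L^2(\mathbb{R}_+)}\bigr).
\end{equation*}
Apply $\nabla_t$ and use the derivation property together with $\nabla_t S(h) = h(t)\,1\otimes 1$ to obtain
\begin{equation*}
\nabla_t(A\,S(h)\,B) = (\nabla_t A)\cdot S(h)B\,+\,h(t)\,A\otimes B\,+\,A\,S(h)\cdot(\nabla_t B),
\end{equation*}
in which the middle summand is precisely $U_t$, accounting for the "extra" term in the Heisenberg relation. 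The two remaining summands, combined with $\nabla_t$ applied to the Voiculescu correction, should recombine into $\delta_s(\nabla_t U_s)$.

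To verify this, I would separately evaluate $\delta_s$ of the biprocess $s\mapsto \nabla_t U_s = (\nabla_t A)\otimes B\cdot h(s) + A\otimes (\nabla_t B)\cdot h(s)$ using Voiculescu's formula a second time, and match the resulting boundary terms of the form $(\nabla_t A)\,S(h)\,B$ and $A\,S(h)\,(\nabla_t B)$ with the above expansion. The main obstacle is the bookkeeping of tensor slots and of the partial traces $(\mathrm{id}\otimes\tau\otimes\mathrm{id})$ under the $\sharp$-action: one has to check that the iterated derivative $\nabla_t\nabla_s$ pairs correctly against $h(s)$ in both computations, that the contractions on left and right legs act on the same tensor positions, and that the interchange of $\nabla_t$ with the partial trace is valid on $\mathcal{SC}_{alg}\odot\mathcal{SC}_{alg}$. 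Once the identity is verified on elementary biprocesses, closability of $\nabla$ and $\delta$ together with the standard graph-closure argument on $\mathbb{L}^{1,2}$ extends it to arbitrary $u\in\mathbb{L}^{1,2}$, concluding the proof.
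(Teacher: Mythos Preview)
The paper does \emph{not} supply its own proof of this statement: the theorem is only stated with a citation to Biane--Speicher (Proposition~5.4.1 in \cite{BS}), and the text moves immediately to the definition of the free Ornstein--Uhlenbeck semigroup. So there is nothing in the paper to compare your argument against directly.

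That said, your approach is genuinely different from the standard Biane--Speicher proof. Their argument works entirely through the chaos expansion: for a biprocess $U_s=\sum_{n,m}(I_n\otimes I_m)(u_{n,m}(\cdot,s,\cdot))$ one has an explicit formula $\delta(U)=\sum_{n,m}I_{n+m+1}(u_{n,m})$, and applying $\nabla_t$ via Proposition~\ref{pp3} splits this sum into the term where the derivative hits the ``middle'' variable (yielding $U_t$) and the remaining terms, which regroup as $\delta_s(\nabla_t U_s)$. This is a pure kernel computation with no use of Voiculescu's formula. Your route via Proposition~\ref{ppV} is more algebraic and conceptually appealing, but it carries a real bookkeeping cost: once you write $\nabla_t U_s=(\nabla_t A)\otimes B\cdot h(s)+A\otimes(\nabla_t B)\cdot h(s)$, these are elements of $\mathcal{SC}^{\odot 3}$, and you must specify exactly what $\delta_s$ means on such three-tensor-valued processes and check that the Voiculescu correction terms (the $m_1\circ(\mathrm{id}\otimes\tau\otimes\mathrm{id})$ pieces) match on both sides. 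You acknowledge this as the ``main obstacle'' but do not carry it out; the chaos-expansion proof avoids this issue entirely because everything stays at the level of $L^2$ kernels. Your strategy is not wrong, but the missing verification is nontrivial, whereas the original proof is a direct index-matching exercise.
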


\begin{definition}
We define the free Ornstein-Uhlenbeck semigroup $(P_t)_{t\geq 0}$ as the second quantization of the contraction $T_t=e^{-t}Id_{L^2_{\mathbb{R}}(\mathbb{R}_+)}$, that is
\newline
$P_t:=\mathcal{F}(e^{-t}Id_{L^2_{\mathbb{R}}(\mathbb{R}_+)})
,t\geq 0$, which is unital, tracial, completely positive and which acts on
$F=\sum_{n=1}^{\infty}I_n(f_n) \in L^2(\mathcal{SC})$ as:
\begin{equation}
P_tF:=\sum_{n=0}^{\infty}e^{-nt}I_n(f_n)
\end{equation}
\end{definition}
\begin{definition}
We denote $L$, the $L^2$-generator of the free Ornstein-Uhlenbeck semigroup defined for $F=\sum_{n=0}^{\infty}I_n(f_n)\in L^2(\mathcal{SC})$ as:
\begin{equation}
LF:=\sum_{n=0}^{\infty}-nI_n(f_n)
\end{equation}
whenever the last series exists in $L^2$, that is that the domain of $L$ (a closed, non-positive self-adjoint operator with dense domain) denoted $dom(L)$ with respect to the $L^2$ norm is given by the following chaotic characterization: 
\begin{equation}
dom(L)=\left\{F=\sum_{n=0}^{\infty}I_n(f_n), \sum_{n=0}^{\infty}n^2\lVert f_n\rVert^2_{L^2(\mathbb{R}_+^n)}<\infty\right\}:=\mathbb{D}^{2,2}\nonumber
\end{equation}

\end{definition}
\begin{definition}
We also denote $C$, the {\it Cauchy Operator}: the negative square-root of the free number operator $C:=-\sqrt{-L}$, defined for $F=\sum_{n=0}^{\infty}I_n(f_n)\in L^2(\mathcal{SC})$ as:
\begin{equation}
CF:=\sum_{n=0}^{\infty}-nI_n(f_n)
\end{equation}
with domain denoted $dom(C)$: 
\begin{equation}
dom(C)=\left\{F=\sum_{n=0}^{\infty}I_n(f_n), \sum_{n=0}^{\infty}n\lVert f_n\rVert^2_{L^2(\mathbb{R}_+^n)}<\infty\right\}:=\mathbb{D}^{1,2}\nonumber
\end{equation}

\end{definition}

In fact, $-L$ is an unbounded positive self-adjoint operator with pure point spectrum: $sp(-L)=\mathbb{N}$ with for each $n\geq 0$ the associated Wigner chaos of order $\left\{\mathcal{H}_n,n\geq 0\right\}$ as corresponding eigenspace, we have the following proposition which is a very important feature of the {\it free group factors} as the "{\it free fourth moment phenomenon}" on the Wigner space precisely appears as in the classical case for eigenvalues of this operator (see Ledoux \cite{ML} for reference in the classical case and Kemp et al. or Cébron or the author \cite{KNPS,C,Di} for more precise details and a conjecture in the free case).
\begin{proposition}
We have the following abstract chaotic decomposition:
\begin{equation}
L^2(\mathcal{\mathcal{SC}})=\bigoplus_{k=0}^{\infty}ker(L+kId)
\end{equation}
\end{proposition}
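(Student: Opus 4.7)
The plan is to identify the eigenspaces $\ker(L+k\,\mathrm{Id})$ one-by-one with the homogeneous Wigner chaoses $\mathcal{H}_k$ and then invoke the Wigner–Itô chaotic decomposition (Biane–Speicher) already stated in the excerpt. The proof should essentially unfold as a short argument, since most of the work is packaged into the definition of $L$ and into the decomposition $L^2(\mathcal{SC})=\bigoplus_{k\ge 0}\mathcal{H}_k$.

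First, I would check the inclusion $\mathcal{H}_k \subseteq \ker(L+k\,\mathrm{Id})$. By definition, for $f\in L^2(\mathbb{R}_+^k)$ the element $I_k(f)$ lies in $\mathcal{H}_k$, and the chaotic formula $LF=\sum_{n\ge 0}-n I_n(f_n)$ gives $L I_k(f)=-k I_k(f)$, so $I_k(f)\in\ker(L+k\,\mathrm{Id})$. Since $\mathcal{H}_k=\{I_k(f):f\in L^2(\mathbb{R}_+^k)\}$, this yields one inclusion immediately.

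Next, I would establish the reverse inclusion $\ker(L+k\,\mathrm{Id})\subseteq\mathcal{H}_k$. Take $F\in\mathrm{dom}(L)$ with $LF=-kF$ and expand $F=\sum_{n\ge 0} I_n(f_n)$ along the Wigner chaotic decomposition. Applying $L$ term-by-term gives $-\sum_{n\ge 0} n I_n(f_n)=-k\sum_{n\ge 0} I_n(f_n)$, and then orthogonality of the chaoses $\mathcal{H}_n\perp \mathcal{H}_m$ for $n\ne m$ (via the Wigner–Itô isometry) forces $(n-k)I_n(f_n)=0$ in $L^2(\mathcal{SC})$ for every $n\ge 0$. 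Hence $f_n=0$ for $n\neq k$, and $F=I_k(f_k)\in\mathcal{H}_k$.

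Combining the two inclusions gives $\ker(L+k\,\mathrm{Id})=\mathcal{H}_k$ for each $k\ge 0$, and the Biane–Speicher decomposition $L^2(\mathcal{SC})=\bigoplus_{k\ge 0}\mathcal{H}_k$ becomes exactly the announced spectral decomposition. There is no genuine obstacle here; the only subtlety to state cleanly is that the sum is an orthogonal Hilbert-space direct sum, which is inherited directly from the Wigner–Itô isometry, and that different eigenspaces are automatically orthogonal (which also follows from $-L$ being self-adjoint). I would therefore expect the proof to fit comfortably in a short paragraph.
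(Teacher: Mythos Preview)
Your argument is correct and is exactly the reasoning the paper has in mind: the paper does not give a separate proof of this proposition but simply asserts, just before stating it, that $-L$ has pure point spectrum $\mathbb{N}$ with $\mathcal{H}_n$ as the eigenspace for the eigenvalue $n$, so that the Wigner--It\^o decomposition $L^2(\mathcal{SC})=\bigoplus_{k\ge 0}\mathcal{H}_k$ becomes the spectral one. Your two-inclusion check $\ker(L+k\,\mathrm{Id})=\mathcal{H}_k$ is precisely the omitted detail, and there is nothing to add.
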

As in the classical case, the Ornstein-Ulhenbeck operator can be seen as a {\it directional derivative} with respect to a scale parameter (see Nualart and Zakai, section 2 in \cite{NuZ}). This also has motivated the definition of $L$ as the {\it derivative} operator in the early beginning of Malliavin calculus.
\begin{proposition}
Let $F=\sum_{n=1}^{\infty}I_n(f) \in L^2(\mathcal{SC})$, define for $\lvert \lambda\rvert <1$, the functional:
\begin{equation}
F_{\lambda}=\tau(F).1+\sum_{n=1}^{\infty} \lambda^n I_n(f),
\end{equation}
Then $LF$ exists if and if only: $F^{\varepsilon}:=\frac{1}{\varepsilon}(F_{1-\varepsilon}-F)$ converges in $L^2$ as $\varepsilon$ tends to zero and in this case,
\begin{equation}
LF=\underset{\varepsilon\rightarrow 0}{\lim}\frac{1}{\varepsilon}(F_{1-\varepsilon}-F),
\end{equation}
\end{proposition}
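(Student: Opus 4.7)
The plan is to reduce everything to a statement about $\ell^2$-convergence of scalar sequences by exploiting the Wigner--It\^o isometry together with the orthogonality of distinct homogeneous chaoses. Writing $F = \tau(F).1 + \sum_{n \geq 1} I_n(f_n)$ one has
$$F^{\varepsilon} = \frac{F_{1-\varepsilon}-F}{\varepsilon} = \sum_{n=1}^{\infty} c_n(\varepsilon)\, I_n(f_n), \qquad c_n(\varepsilon) := \frac{(1-\varepsilon)^n - 1}{\varepsilon},$$
and since the $\mathcal{H}_n$ are pairwise orthogonal the Wigner--It\^o isometry gives
$$\|F^{\varepsilon}\|_{L^2(\mathcal{SC})}^2 = \sum_{n=1}^{\infty} c_n(\varepsilon)^2 \,\|f_n\|_{L^2(\mathbb{R}_+^n)}^2.$$
The problem therefore becomes: decide when the vectors $\bigl(c_n(\varepsilon) \|f_n\|\bigr)_{n\geq 1}$ converge in $\ell^2(\mathbb{N}^*)$ as $\varepsilon \to 0$, given that $c_n(\varepsilon) \to -n$ pointwise in $n$.

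For the sufficient direction I would assume $F \in \mathrm{dom}(L)$, i.e. $\sum_{n} n^2 \|f_n\|^2 < \infty$. The key scalar estimate is the uniform bound $|c_n(\varepsilon)| \leq n$ for all $\varepsilon \in (0,1)$, which follows from the mean value theorem applied to $x \mapsto x^n$ on $[1-\varepsilon,1]$: there exists $\xi \in (0,\varepsilon)$ with $|c_n(\varepsilon)| = n(1-\xi)^{n-1} \leq n$. Consequently the summand $c_n(\varepsilon)^2 \|f_n\|^2$ is dominated by the summable $n^2 \|f_n\|^2$, and Lebesgue's dominated convergence on $\ell^2$ with counting measure yields
$$F^{\varepsilon} \xrightarrow[\varepsilon \to 0]{L^2} \sum_{n=1}^{\infty} (-n)\, I_n(f_n) = LF.$$

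For the necessary direction I would suppose that $F^{\varepsilon}$ converges in $L^2(\mathcal{SC})$ to some $G$ as $\varepsilon \to 0$. Because the orthogonal projection $\pi_{\mathcal{H}_n}$ is $L^2$-continuous, and because $\pi_{\mathcal{H}_n}(F^{\varepsilon}) = c_n(\varepsilon) I_n(f_n) \to -n\, I_n(f_n)$, one identifies $\pi_{\mathcal{H}_n}(G) = -n\, I_n(f_n)$ for every $n$. Writing $G = \sum_n \pi_{\mathcal{H}_n}(G)$ (a convergent orthogonal series in $L^2$) and applying Wigner--It\^o once more gives $\sum_n n^2 \|f_n\|^2 = \|G\|_2^2 < \infty$, so $F \in \mathrm{dom}(L) = \mathbb{D}^{2,2}$ and $G = LF$. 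Alternatively one can invoke Fatou's lemma: $\sum_{n=1}^N n^2 \|f_n\|^2 = \lim_{\varepsilon\to 0} \sum_{n=1}^N c_n(\varepsilon)^2\|f_n\|^2 \leq \liminf_{\varepsilon \to 0} \|F^{\varepsilon}\|_2^2 < \infty$, then let $N \to \infty$.

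The argument is essentially routine once the chaotic expansion is written out; there is no real obstacle. The only delicate point is obtaining the uniform domination $|c_n(\varepsilon)| \leq n$ needed for the dominated convergence step, without which one would have to handle interchanging limit and infinite sum by hand. Everything else follows from Parseval/Wigner--It\^o orthogonality.
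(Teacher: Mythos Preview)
Your proof is correct and follows essentially the same strategy as the paper: both exploit the Wigner--It\^o isometry to reduce to scalar $\ell^2$-questions, use the uniform bound $|c_n(\varepsilon)|\le n$ for the forward implication, and identify the limit chaos-by-chaos (via projections) for the converse. The only cosmetic difference is that the paper phrases the forward direction as a Cauchy argument $\tau(F^{\varepsilon_1}F^{\varepsilon_2})\to\tau(|LF|^2)$ rather than invoking dominated convergence directly, but the substance is identical.
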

\begin{proof}
For $0<\varepsilon<1$, for all $n\geq 0$, $\varepsilon^{-1}((1-\varepsilon)^n-1)\leq n$ (and converges to $-n$ as $\varepsilon\rightarrow 0$) , if $LF$ exists then a simple computation shows that:
\begin{equation}
\lim_{\substack{{\varepsilon_1\rightarrow 0}\\
{\varepsilon_2\rightarrow 0}}} \tau(F^{\varepsilon_1}F^{\varepsilon_2})=\tau[\lvert LF\rvert ^2],\nonumber
\end{equation}
thus the only if part is obtained.
\newline
For the reverse implication (let's recall that $\pi_n$ denotes the projection onto the $n$-th Wigner chaos) suppose that $F^{\varepsilon}$ is a Cauchy-sequence in $L^2$ converging to $G\in L^2(\mathcal{SC})$. 
\newline
Then we set, $F_n=\sum_{k=0}^n \pi_n(F)$,  $F_n^{\varepsilon}=\sum_{k=0}^n \pi_n(F^{\varepsilon})$, and 
$G_n=\sum_{k=0}^n \pi_n(G)$, then we have that (since we suppose that the limit is $G$) $F_n^{\varepsilon}\overunderset{L^2}{\varepsilon \rightarrow 0}{\rightarrow} G_n$ and by assumption that: $F_n^{\varepsilon}\overunderset{L^2}{\varepsilon \rightarrow 0}{\rightarrow} LF_n$, thus $LF$ exists and $F^{\varepsilon}\overunderset{L^2}{\varepsilon\rightarrow 0}{\rightarrow} LF$ and the proof is obtained.
\qed
\end{proof}

\begin{flushleft}
    It turns out that, as in the classical case, the semigroup $(P_t)$ satisfies a commutation relation with the free Malliavin gradient. This result has deeps consequences in the classical case, the main one is with the logarithmic Sobolev inequality. Unfortunately, we have not been able to use this fact prove it in differently of Biane (c.f corollary 1 in \cite{Biane}) by means of free Ito formula and noncommutative martingales. In fact, in the classical case, the most simple and direct proof is achieved by means of Ito formula and the use of martingales and seems to first appeared in the introduction of Capitaine, Hsu and Ledoux in \cite{chl} (this proof seems to be known for a long time by Maurey and Neveu as discussed in the introduction of the aforementioned paper). 
    \end{flushleft}
\begin{remark}
The free Ornstein-Uhlenbeck semigroup acts in fact as a mollifier: for all , 
$P_t(L^2(\mathcal{SC}))\subset \mathbb{D}^{1,2}$ as we have for 
$F=\sum_{n=0}^{\infty}I_n(f_n)\in L^2(\mathcal{SC})$, 
$\sum_{n=0}^{\infty}ne^{-2nt}\lVert f_n\rVert^2_{L^2(\mathbb{R}_+^n)}<\infty$.

\end{remark}
\begin{proposition}(Commutation relation with O.U semigroup)\label{bismut}
Let $F\in \mathbb{D}^{1,2}$, then we have the following relation which holds true for every $t>0$:
\begin{equation}
    \nabla P_tF=e^{-t}P_t^{\otimes 2}\nabla F
\end{equation}
\end{proposition}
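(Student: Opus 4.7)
The natural strategy is to verify the identity first on a single homogeneous Wigner chaos, where both sides reduce to explicit chaotic formulas, and then extend to all of $\mathbb{D}^{1,2}$ by linearity and $L^2$-continuity. Since $P_t$ is defined chaos-by-chaos as $e^{-nt}\mathrm{Id}_{\mathcal{H}_n}$, and the action of $\nabla$ on $\mathcal{H}_n$ is explicitly given by Proposition \ref{pp3}, the calculation on a single chaos should reduce to a bookkeeping of exponential factors.

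More concretely, fix $F=I_n(f)$ with $f\in L^2(\mathbb{R}_+^n)$. On the one hand, $P_tF=e^{-nt}I_n(f)$, so by Proposition \ref{pp3}
\begin{equation*}
\nabla_s(P_tF)=e^{-nt}\sum_{k=1}^n (I_{k-1}\otimes I_{n-k})(f_s^k),
\end{equation*}
for almost every $s\geq 0$. On the other hand, each term $(I_{k-1}\otimes I_{n-k})(f_s^k)$ lies in $\mathcal{H}_{k-1}\otimes \mathcal{H}_{n-k}$, and since $P_t$ is the second quantization $\mathcal{F}(e^{-t}\mathrm{Id})$, the tensorized semigroup $P_t^{\otimes 2}$ acts as multiplication by $e^{-(k-1)t}e^{-(n-k)t}=e^{-(n-1)t}$ on this subspace. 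Hence
\begin{equation*}
e^{-t}P_t^{\otimes 2}\nabla_sF= e^{-t}\sum_{k=1}^n e^{-(n-1)t}(I_{k-1}\otimes I_{n-k})(f_s^k)= e^{-nt}\sum_{k=1}^n (I_{k-1}\otimes I_{n-k})(f_s^k),
\end{equation*}
which matches the previous display. By linearity the formula holds on each finite sum of chaoses, i.e.\ on the algebra $\bigcup_N\mathcal{P}_N$, which is dense in $\mathbb{D}^{1,2}$.

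For the extension, take $F=\sum_{n\geq 0}I_n(f_n)\in\mathbb{D}^{1,2}$ and set $F_N=\sum_{n\leq N}I_n(f_n)$. Then $F_N\to F$ in the graph norm $\lVert\cdot\rVert_{1,2}$, so $\nabla F_N\to \nabla F$ in $\mathcal{B}_2$. The remark preceding the statement guarantees $P_tF\in\mathbb{D}^{1,2}$ and an identical chaotic computation shows $\nabla P_tF_N\to \nabla P_tF$ in $\mathcal{B}_2$. Finally, $P_t^{\otimes 2}$ is a contraction on $L^2(\mathcal{SC})\otimes L^2(\mathcal{SC})$ (as the second quantization of a contraction), so $e^{-t}P_t^{\otimes 2}\nabla F_N\to e^{-t}P_t^{\otimes 2}\nabla F$ in $\mathcal{B}_2$, and passing to the limit in the identity $\nabla P_tF_N=e^{-t}P_t^{\otimes 2}\nabla F_N$ yields the result.

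The only genuine point of care, and thus the main obstacle, is interpreting $P_t^{\otimes 2}$ correctly on the biprocess space $L^2(\mathbb{R}_+,L^2(\mathcal{SC})\otimes L^2(\mathcal{SC}))$: the tensor semigroup must act pointwise in the time variable $s$ and diagonally on the two legs of the tensor, so that its restriction to $\mathcal{H}_{k-1}\otimes \mathcal{H}_{n-k}$ is scalar multiplication by $e^{-(n-1)t}$. Once this identification is made (via the Hilbert--Schmidt picture recalled in the preliminaries and the second-quantization functoriality), the proof is the chaotic bookkeeping above.
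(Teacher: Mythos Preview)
Your proof is correct and follows essentially the same approach as the paper: verify the identity on a single homogeneous chaos $I_n(f)$ by splitting the factor $e^{-nt}=e^{-t}\cdot e^{-(k-1)t}\cdot e^{-(n-k)t}$ in the expression from Proposition~\ref{pp3}, then extend by approximation. You have in fact spelled out the density/limit argument and the interpretation of $P_t^{\otimes 2}$ on $\mathcal{B}_2$ more carefully than the paper, which simply invokes ``an approximation argument.''
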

\begin{proof}
Thanks to an approximation argument, it suffice to prove that the result holds true for, $F=I_n(f), f\in L^2(\mathbb{R}_+^n)$, $n\geq 0$.
\newline
Then for all $t>0$, and almost all $s\geq 0$,
\begin{eqnarray}
    \nabla_s P_tF&=&\nabla_s (e^{-nt}I_n(f))\nonumber\\
    &=&e^{-nt}\sum_{k=1}^n I_{k-1}\otimes I_{n-k}(f_s^k)\nonumber\\
    &=&e^{-t}\sum_{k=1}^n e^{-(k-1)t}I_{k-1}\otimes e^{-(n-k)t}I_{n-k}(f_s^k)\nonumber\\
    &=& e^{-t}(P_t\otimes P_t)\nabla_sF,\nonumber
\end{eqnarray}
which concludes.
\qed
\end{proof}

\begin{flushleft}
We can also provide some consequences of the {\it hypercontractivity} of the free Ornstein-Uhlenbeck semigroup, this last important result was proved by Biane in the free case by \cite{Biane} (in fact, Biane proved a much stronger result: the hypercontractivity of the $q$-Gaussian Ornstein-Uhlenbeck semigroup for all $q\in (-1,1)$, thus the free case $q=0$ being just a particular result). We also remind tho the reader that the next result which is basically an equivalence in the finite Wigner chaos between $\lVert\:.\rVert_p$ and $\lVert \:.\rVert_q$ is an important lemma in the classical case to reach the proof of the so-called {\it Meyer inequalities}, first proved by Meyer in the celebrated paper \cite{meyer}. Proving the free version of the {\it Meyer's inequalities} is for now a longstanding open problem in the free probabilistic context (we don't even know what norm should use, as it relies on noncommutative Riesz transform and thus to prove equivalence on the $L^p,1\leq p\leq\infty$-norms between a noncommutative {\it Carré-du-champ} and the {\it Cauchy operator}, i.e. $\lVert \Gamma(F,F)\rVert_p\sim c_p\lVert CF\rVert_p$ where $\Gamma(F,F)=\langle \nabla F,\nabla F\rangle_{L^2(\mathbb{R}_+)}=\int_{\mathbb{R}_+}\nabla_t(F)\sharp (\nabla_t(F))^*dt$ is the noncommutative {\it Carré-du-champ} on the Wigner space) as mentioned by Biane and Speicher in the last page of \cite{BS} (see also Nualart \cite{Nual} or \"Ust\"unel \cite {ust} for details about classical {\it Meyer's inequalities} and different types of proofs). 
\end{flushleft}
\begin{theorem}
    Let $F\in L^p(\mathcal{SC}),p\geq 1$ and denote the projection onto the $n$-th Wiener chaos $\pi_n(F)$. Then the linear map:
    \newline
    \begin{equation}
    F\mapsto \pi_n(F)
        \end{equation}
        is continuous on $L^p(\mathcal{SC})$.
\end{theorem}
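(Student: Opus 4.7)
The plan is to reduce the theorem to the norm-equivalence $\|G\|_p \leq c(n,p)\|G\|_2$ for $G \in \mathcal{H}_n$, and then exploit that $\pi_n$ is an orthogonal projection on $L^2(\mathcal{SC})$, hence an $L^2$-contraction. Since $\tau$ is a tracial state, $\|F\|_2 \leq \|F\|_p$ for every $p \geq 2$, so the chain $\|\pi_n F\|_p \leq c(n,p)\|\pi_n F\|_2 \leq c(n,p)\|F\|_2 \leq c(n,p)\|F\|_p$ will close the range $p \in [2,\infty]$ as soon as the chaos norm-equivalence is in hand.

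To obtain that equivalence in the range $p \geq 2$, I would invoke Biane's hypercontractivity of the free Ornstein-Uhlenbeck semigroup $(P_t)_{t\geq 0}$ from \cite{Biane}, namely $\|P_t F\|_q \leq \|F\|_2$ whenever $e^{2t} \geq q-1$. Since $P_t$ acts as the scalar $e^{-nt}$ on $\mathcal{H}_n$, choosing $q = p$ and $t = \tfrac{1}{2}\log(p-1)$ immediately yields $\|G\|_p \leq (p-1)^{n/2}\|G\|_2$ for every $G \in \mathcal{H}_n$. Consequently the operator norm of $\pi_n$ on $L^p(\mathcal{SC})$ is at most $(p-1)^{n/2}$ for $p \in [2,\infty)$. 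The endpoint $p = \infty$ is in fact already handled more sharply by the Haagerup-Biane-Speicher inequality (Theorem \ref{33}), giving $\|\pi_n F\|_\infty \leq (n+1)\|\pi_n F\|_2 \leq (n+1)\|F\|_\infty$.

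For $1 \leq p < 2$ I would argue by duality. The projection $\pi_n$ is self-adjoint with respect to the trace pairing, so its action on $L^p(\mathcal{SC})$ coincides with the Banach-space adjoint of its action on $L^{p'}(\mathcal{SC})$, with $p' = p/(p-1) \in (2,\infty]$. The noncommutative duality $(L^{p'})^* = L^p$ (valid here because $\tau$ is a faithful normal tracial state, so $\mathcal{SC}$ is a finite von Neumann algebra) then transfers the bound established in the previous step back to $L^p$, with the same constant $(p'-1)^{n/2}$, and $n+1$ at the endpoint $p = 1$.

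I do not anticipate a serious obstacle: the only step that requires care is the legitimacy of the duality at $p = 1$, which reduces to the standard facts that $\mathcal{SC}_{alg}$ is dense in every $L^p(\mathcal{SC})$ and that $L^1(\mathcal{SC})$ is the predual of $\mathcal{SC}$. Everything else follows directly from Biane's hypercontractivity together with the spectral structure of $(P_t)$ on the Wigner chaos decomposition.
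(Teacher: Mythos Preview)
Your proof is correct and follows essentially the same route as the paper: hypercontractivity of $(P_t)$ combined with the eigenvalue action $P_t|_{\mathcal{H}_n}=e^{-nt}$ to get the $L^p$--$L^2$ comparison on $\mathcal{H}_n$ for $p\geq 2$, followed by the $L^p$--$L^{p'}$ duality and self-adjointness of $\pi_n$ for $p<2$. You even go slightly further than the paper by explicitly treating the endpoints $p=\infty$ (via the Haagerup--Biane--Speicher bound) and $p=1$ (via the predual identification), which the paper's written proof leaves implicit.
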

\begin{proof}
    Let's first suppose that $p\geq 2$, and choose $t\geq 0$, such that $p=e^{2t}+1$. Then, from the hypercontractivity of the free Ornstein-Uhlenbeck semigroup (Theorem $3$ of Biane in \cite{Biane})  we get:
    \begin{equation}
        \lVert P_tF\rVert_{p}\leq \lVert F\rVert_2\leq \lVert F\rVert_p\nonumber
    \end{equation}
Now since $(P_t)$ is a contraction for all $L^p(\mathcal{SC}),p\geq 1$-norms, we know that:
\begin{equation}
    \lVert P_t\pi_n(F)\rVert_{p}\leq \lVert \pi_n(F)\rVert_2\leq \lVert F\rVert_2.\nonumber
\end{equation}
but 
\begin{equation}
    P_t\pi_n(F)=e^{-nt}\pi_n(F)
\end{equation}
and thus 
\begin{equation}
    \lVert \pi_n(F)\rVert_p\leq e^{nt}\lVert F\rVert_p\nonumber
\end{equation}
Now if $1<p<2$, we use the duality where $q$ is the conjugate of $p$ and we choose $t\geq 0$ such that $q=e^{2t}+1$ to get:
\begin{eqnarray}
    \lVert \pi_n(F)\rVert_p&=&\underset{\lVert G\rVert_q\leq 1}{\sup}\lvert\langle G,\pi_n(F)\rangle_2\rvert\nonumber\\
    &=&\underset{\lVert G\rVert_q\leq 1}{\sup}\rvert\langle \pi_n(G),F\rangle_2\rvert\nonumber
\nonumber\\
&=&\underset{\lVert G\rVert_q\leq 1}{\sup}\lvert\langle \pi_n(F),\pi_n(G)\rangle_2\rvert\nonumber\\
&\leq &\lVert F\rVert_p \underset{\lVert G\rVert_q\leq 1}{\sup}\lVert \pi_n(G)\rVert_q\nonumber\\
&\leq & e^{nt}\lVert F\rVert_p,\nonumber\\
\end{eqnarray}
\end{proof}

\section{Commutation relations and Malliavin derivatives}

The goal of this section is to prove several commutation relation (of different kinds) between the Malliavin derivative and conditional expectation, and also between a Malliavin differentiable element and the {\it left creation operator}. We also investigate the absence of central and Malliavin differentiable elements in the Wigner space. We also show that there is no Malliavin differentiable idempotents in the Wigner space.
\newline
 Most of the result of this section can thus be thought as free analog of well-known results which can be found in the celebrated book of Nualart \cite{Nual}.
\begin{flushleft}
We first recall that given a von Neumann subalgebra $\mathcal{B}$ of $W^*(\left\{S_t,t\geq 0\right\})$, there exists a unique trace preserving conditional expectation onto $\mathcal{B}$ denoted $\tau[.|\mathcal{B}]$. 
\newline
The interested reader can consult the paper of Takesaki \cite{take} to have further details on the notion of conditional expectation onto von Neumann subalgebras.
\end{flushleft}
As usual, for any Borel set $\mathcal{A}\subset (\mathbb{R}_+,\mathcal{B}(\mathbb{R}_+))$ with finite Lebesgue measure $\lambda(\mathcal{A})$, we denote $S(\mathcal{A}):=S(\mathds{1}_{\mathcal{A}})$, the associated semicircular element (centered and of variance $\lambda(\mathcal{A})$).
\begin{definition}
Given any Borel set $\mathcal{A}$ of $(\mathbb{R}_+,\mathcal{B}(\mathbb{R}_+))$ with finite Lebesgue measure, we denote $\mathcal{F}_{\mathcal{A}}$ the von Neumann subalgebra generated by:
\begin{equation}
    \left\{S(\mathds{1}_{\mathcal{B}}), B\subset \mathcal{A}, \lambda(B)<\infty\right\}
\end{equation}
\end{definition}

\begin{lemma}
Let $F=\sum_{n=0}^{\infty}I_n(f_n)\in L^2(\mathcal{SC})$, then:
\begin{equation}
    \tau(F|\mathcal{F}_{\mathcal{A}})=\sum_{n=0}^{\infty} I_n(f_n\mathds{1}_{\mathcal{A}})
\end{equation}
\end{lemma}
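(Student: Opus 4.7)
The clean way to establish this formula is to recognize the right-hand side as a second quantization and show it coincides with the conditional expectation because both are orthogonal projections onto the same subspace of $L^2(\mathcal{SC})$. Let $p_{\mathcal{A}} : L^2_{\mathbb{R}}(\mathbb{R}_+) \to L^2_{\mathbb{R}}(\mathbb{R}_+)$ be multiplication by $\mathds{1}_{\mathcal{A}}$, an orthogonal projection, and let $E := \mathcal{F}(p_{\mathcal{A}})$ be its second quantization on $F(\mathcal{H}_{\mathbb{C}}) \simeq L^2(\mathcal{SC})$. On the $n$-fold tensor $h_1 \otimes \cdots \otimes h_n$, $E$ acts coordinatewise, and transferring through the isomorphism $I_n$ of Proposition on Wigner chaos, one reads off $E(I_n(f_n)) = I_n(f_n \mathds{1}_{\mathcal{A}^n})$. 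Since $p_{\mathcal{A}}^2 = p_{\mathcal{A}} = p_{\mathcal{A}}^*$, functoriality of $\mathcal{F}$ gives $E^2 = E = E^*$ on $L^2(\mathcal{SC})$; this is exactly the statement that $E$ is the orthogonal projection onto its range.

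Next I would identify $\mathrm{Range}(E)$ with $L^2(\mathcal{F}_{\mathcal{A}},\tau)$. The inclusion $\mathrm{Range}(E) \subset L^2(\mathcal{F}_{\mathcal{A}})$ follows from approximating an arbitrary kernel supported on $\mathcal{A}^n$ by simple functions of the form $\sum a_i \mathds{1}_{B_1^i \times \cdots \times B_n^i}$ with each $B_j^i \subset \mathcal{A}$ off the diagonal, so that by the very definition of multiple Wigner integrals the corresponding $I_n$ is a polynomial in $S(\mathds{1}_{B_j^i})$, hence an element of $\mathcal{F}_{\mathcal{A}}$; passing to the $L^2$-limit and summing over $n$ keeps us inside $L^2(\mathcal{F}_{\mathcal{A}})$. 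For the reverse inclusion, the generating set $\{ S(\mathds{1}_{B_1}) \cdots S(\mathds{1}_{B_k}) : B_j \subset \mathcal{A}\}\cup\{1\}$ is weakly dense in $\mathcal{F}_{\mathcal{A}}$, and hence $L^2$-total in $L^2(\mathcal{F}_{\mathcal{A}})$; by the product formula (Proposition \ref{pprodui}) each such product rewrites as a finite sum of $I_j$'s of kernels built from the $\mathds{1}_{B_i}$'s, all supported in $\mathcal{A}^j$, and therefore lies in $\mathrm{Range}(E)$.

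The two projections, $E$ and $\tau(\cdot\,|\,\mathcal{F}_{\mathcal{A}})$, are both the orthogonal $L^2$-projection onto $L^2(\mathcal{F}_{\mathcal{A}})$ (the trace-preserving conditional expectation is this orthogonal projection because $\tau$ is tracial and faithful, see Takesaki), so they coincide; applied to $F = \sum_n I_n(f_n)$ this yields $\tau(F\,|\,\mathcal{F}_{\mathcal{A}}) = \sum_n I_n(f_n \mathds{1}_{\mathcal{A}^n})$, which is the claim (with the convention $f_n\mathds{1}_{\mathcal{A}} = f_n\mathds{1}_{\mathcal{A}^n}$). The only mildly delicate step is the density argument identifying $\mathrm{Range}(E)$ with $L^2(\mathcal{F}_{\mathcal{A}})$; everything else is essentially functoriality of second quantization combined with the Wigner--Itô isometry.
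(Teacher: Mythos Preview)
Your argument is correct and takes a genuinely different route from the paper. The paper proceeds by direct computation: for an elementary off-diagonal kernel $f_n=\mathds{1}_{B_1\times\cdots\times B_n}$ with the $B_i$ pairwise disjoint, it writes $S(B_i)=S(B_i\cap\mathcal{A})+S(B_i\cap\mathcal{A}^c)$, expands the product, and uses freeness of the centered pieces $S(B_i\cap\mathcal{A}^c)$ from $\mathcal{F}_{\mathcal{A}}$ (and from one another) to see that only the pure $\mathcal{A}$-term survives under the conditional expectation; the general case follows by linearity and approximation. Your approach is more structural: you identify the map $F\mapsto\sum_n I_n(f_n\mathds{1}_{\mathcal{A}^n})$ as the second quantization $\mathcal{F}(p_{\mathcal{A}})$ of the orthogonal projection $p_{\mathcal{A}}$, hence an orthogonal projection on $L^2(\mathcal{SC})$, and then match its range with $L^2(\mathcal{F}_{\mathcal{A}})$. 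The paper's proof is more self-contained (it uses nothing beyond the definition of $I_n$ and freeness), whereas yours leans on the product formula to close the range identification; on the other hand, your argument makes transparent why the formula holds---it is just functoriality of $\mathcal{F}$---and would carry over verbatim to any second-quantized projection, not only multiplication by an indicator.
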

\begin{proof}
Let consider an elementary multiple Wigner integral, $F=I_n(f_n)$ with $f_n=\mathds{1}_{B_1\times\ldots\times B_n}$, where $(B_i)_{i=1}^n$ are mutually disjoints Borel sets with finite Lebesgue measure (and note also that the mapping $B\mapsto S(B)$ is linear),
\newline
Then by using freeness, we see that:
\begin{eqnarray}
\tau(F|\mathcal{F}_{\mathcal{A}})&=&\tau(S(B_1)\ldots S(B_n)|\mathcal{F}_{\mathcal{A}})\nonumber\\
&=&\tau\left(\prod_{i=1}^n(S(B_i\cap\mathcal{A})+S(B_i\cap\mathcal{A}^c))|\mathcal{F}_{\mathcal{A}}\right)\nonumber\\
&=& S(B_1\cap\mathcal{A})\ldots S(B_n\cap\mathcal{A})\nonumber\\
&:=& I_n(f_n\mathds{1}_{\mathcal{A}}^{\otimes n})\nonumber
\end{eqnarray}
Where we have used in the third line that $B_i\cap B_j=\emptyset$ for $ i\neq j$. The general result follows then by approximation, and polarization.
\qed
\end{proof}
\begin{flushleft}
We first introduce an elementary lemma which ensures that conditional expection onto von Neumann subalgebra (of the one generated by the free Brownian motion) of Malliavin differentiable functionals is also Malliavin differentiable also provide the commutation relation between the derivative and the conditional expectation. The reader familiar with Malliavin calculus might notice that this fact is particular the to {\it free Brownian motion case} as for the {\it non commutative fractional Brownian motion} (abbreged in {\it ncfBm}), this formula and more generally the version of the Clark-Ocone formula will be as in the classical case much harder to obtain since the norm may increase by multiplying with indicators functions (see proposition 3.2 in Léon and Nualart \cite{LNual} or Bender and J.Elliot \cite{BL}).
\end{flushleft}
\begin{lemma}\label{lder}
Let $F\in \mathbb{D}^{1,2}$, and $\mathcal{F}_{\mathcal{A}}$ the previously defined von Neumann subalgebra of $W^*(\left\{S_t,t\geq 0\right\})$, 
then $\tau(F|\mathcal{F}_{\mathcal{A}})\in 
\mathbb{D}^{1,2}$, and:
\begin{equation}
    \nabla_t(\tau(F|\mathcal{F}_{\mathcal{A}}))=\tau\otimes \tau(\nabla_t(F)|\mathcal{F}_{\mathcal{A}}).\mathds{1}_{\mathcal{A}}(t)
\end{equation}
\end{lemma}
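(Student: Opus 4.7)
The plan is to reduce everything to multiple Wigner integrals via the chaotic decomposition, compute both sides explicitly, and then extend by an approximation/closability argument. The previous lemma on conditional expectation is the crucial input.

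First I would reduce to the case $F = I_n(f)$ with $f \in L^2(\mathbb{R}_+^n)$. By the density of $\bigcup_n \mathcal{P}_n$ in $\mathbb{D}^{1,2}$, and by the closability of $\nabla$ combined with the fact that $\tau(\cdot \mid \mathcal{F}_{\mathcal{A}})$ is a contraction in $L^2(\mathcal{SC})$ (hence continuous), it suffices to establish the identity on a fixed homogeneous chaos, and then pass to the limit on partial sums of the chaotic expansion of a general $F$.

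For $F = I_n(f)$, the previous lemma gives $\tau(F \mid \mathcal{F}_{\mathcal{A}}) = I_n(f \cdot \mathds{1}_{\mathcal{A}}^{\otimes n})$. I would then apply Proposition \ref{pp3}, which yields
\begin{equation*}
\nabla_t\bigl(\tau(F \mid \mathcal{F}_{\mathcal{A}})\bigr) = \sum_{k=1}^n I_{k-1} \otimes I_{n-k}\bigl((f \cdot \mathds{1}_{\mathcal{A}}^{\otimes n})_t^k\bigr).
\end{equation*}
The key elementary observation is the factorisation
\begin{equation*}
(f \cdot \mathds{1}_{\mathcal{A}}^{\otimes n})_t^k(t_1,\ldots,t_{n-1}) \;=\; \mathds{1}_{\mathcal{A}}(t) \cdot f_t^k(t_1,\ldots,t_{n-1}) \cdot \mathds{1}_{\mathcal{A}}^{\otimes (n-1)}(t_1,\ldots,t_{n-1}),
\end{equation*}
which pulls out the factor $\mathds{1}_{\mathcal{A}}(t)$ and leaves an $(n{-}1)$-fold $\mathds{1}_{\mathcal{A}}$-weight to be distributed across the two tensor legs of lengths $k-1$ and $n-k$.

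Next, applying the previous lemma separately in each tensor leg, exactly as the notation $\tau \otimes \tau(\cdot \mid \mathcal{F}_{\mathcal{A}})$ suggests (conditional expectation with respect to $\mathcal{F}_{\mathcal{A}} \bar{\otimes} \mathcal{F}_{\mathcal{A}}$), I get
\begin{equation*}
(\tau \otimes \tau)\bigl(I_{k-1} \otimes I_{n-k}(f_t^k) \,\big|\, \mathcal{F}_{\mathcal{A}}\bigr) \;=\; I_{k-1} \otimes I_{n-k}\bigl(f_t^k \cdot \mathds{1}_{\mathcal{A}}^{\otimes(n-1)}\bigr),
\end{equation*}
and summing over $k$ after multiplying by $\mathds{1}_{\mathcal{A}}(t)$ matches the formula above. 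Thus the identity holds on each $\mathcal{P}_n$. For the final approximation step, if $F_N \to F$ in $\mathbb{D}^{1,2}$ with $F_N \in \mathcal{P}_N$, then $\tau(F_N \mid \mathcal{F}_{\mathcal{A}}) \to \tau(F \mid \mathcal{F}_{\mathcal{A}})$ in $L^2(\mathcal{SC})$, while the right-hand side $\mathds{1}_{\mathcal{A}}(t)\,\tau\otimes\tau(\nabla_t F_N \mid \mathcal{F}_{\mathcal{A}})$ converges in $\mathcal{B}_2$ (contractivity of the conditional expectation in both legs, combined with the trivial bound by $\mathds{1}_{\mathcal{A}}$); closability of $\nabla$ then puts $\tau(F \mid \mathcal{F}_{\mathcal{A}})$ in $\mathbb{D}^{1,2}$ and gives the identity. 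The main (mild) obstacle is bookkeeping the placement of the $\mathds{1}_{\mathcal{A}}$'s when splitting $(f \mathds{1}_{\mathcal{A}}^{\otimes n})_t^k$; everything else is a direct consequence of the chaotic formula for $\nabla$ and the previous lemma.
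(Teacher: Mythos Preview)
Your proof is correct and follows essentially the same route as the paper: both use the previous lemma to write $\tau(F\mid\mathcal{F}_{\mathcal{A}})=\sum_n I_n(f_n\mathds{1}_{\mathcal{A}}^{\otimes n})$, apply the explicit action of $\nabla_t$ on multiple Wigner integrals, and identify the factorisation $(f_n\mathds{1}_{\mathcal{A}}^{\otimes n})_t^k=\mathds{1}_{\mathcal{A}}(t)\cdot f_{n,t}^k\cdot\mathds{1}_{\mathcal{A}}^{\otimes(n-1)}$. The only minor difference is that the paper establishes $\tau(F\mid\mathcal{F}_{\mathcal{A}})\in\mathbb{D}^{1,2}$ directly from the chaotic characterisation via the norm estimate $\sum_n n\lVert f_n\mathds{1}_{\mathcal{A}}^{\otimes n}\rVert^2\leq\sum_n n\lVert f_n\rVert^2<\infty$, whereas you obtain it through the closability argument; both are valid and equivalent here.
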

\begin{proof}
From the previous lemma, remark that is a consequence that $\tau(F|\mathcal{F}_{\mathcal{A}})\in \mathbb{D}^{1,2}$. Indeed:
\begin{equation}
    \sum_{n=0}^{\infty} n\lVert f_n\mathds{1}_{\mathcal{A}}^{\otimes n}\rVert_{L^2(\mathbb{R}^n_+)}^2\leq \sum_{n=0}^{\infty} n\lVert f_n\rVert_{L^2(\mathbb{R}^n_+)}^2 =\lVert\nabla F\rVert^2_{\mathcal{B}_2}<\infty\nonumber
\end{equation}
since it just multiply each element of the chaotic decomposition by $\mathds{1}_{\mathcal{A}}^{\otimes n}$, and thus does not increase for each $n\geq 0$ the norm $\lVert f_n\rVert_2$.
\bigbreak

Now, we have for $F=\sum_{n=0}^{\infty}I_n(f_n)\in \mathbb{D}^{1,2}$
\begin{eqnarray}
\nabla_t(\tau(F|\mathcal{F}_{\mathcal{A}}))&=&\sum_{n=0}^{\infty} \nabla_t(I_n(f_n\mathds{1}_{\mathcal{A}}^{\otimes n}))\nonumber\\
&=&\sum_{n=0}^{\infty} \sum_{k=1}^n I_{k-1}\otimes I_{n-k}(f_n^{k,t}.\mathds{1}_{\mathcal{A}}^{\otimes (n-1)}).\mathds{1}_{\mathcal{A}}(t)\nonumber\\
&=& \tau\otimes \tau(\nabla_t(F)|\mathcal{F}_{\mathcal{A}}).\mathds{1}_{\mathcal{A}}(t)\nonumber
\end{eqnarray}
 We recall that for almost all $t\geq 0$, $f_n^{k,t}.\mathds{1}_{\mathcal{A}}^{\otimes (n-1)}$ is seen as a function of $L^2(\mathbb{R}^{k-1}_+)\otimes L^2(\mathbb{R}^{n-k}_+)$.
\qed
\end{proof}

\bigbreak
As a direct consequence, we have the free following corollary which gives the {\it local property} of the Malliavin derivative:
\begin{lemma}
Let $F\in\mathbb{D}^{1,2}$, then if $F$ belongs to $\mathcal{F}_{\mathcal{A}}$, then for $t\in \mathcal{A}^c$, we have:
\begin{equation}
\nabla_t(F)=0
\end{equation}
\end{lemma}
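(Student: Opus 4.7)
The plan is to derive this directly from the preceding commutation lemma (Lemma \ref{lder}) combined with the defining property of the conditional expectation: $\tau(F\mid \mathcal{F}_{\mathcal{A}}) = F$ whenever $F \in \mathcal{F}_{\mathcal{A}}$. This is a straightforward corollary rather than a substantive new result, so the structure should mirror the classical Nualart-style argument but using the free Malliavin formalism already established.

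First, I would observe that the hypothesis $F \in \mathcal{F}_{\mathcal{A}}$, together with the standard property of the unique trace-preserving conditional expectation (which can be extracted from Takesaki \cite{take} as alluded to in the paper), gives $\tau(F \mid \mathcal{F}_{\mathcal{A}}) = F$ as elements of $L^2(\mathcal{SC})$. Since $F \in \mathbb{D}^{1,2}$, Lemma \ref{lder} applies and yields the identity
\begin{equation*}
\nabla_t F \;=\; \nabla_t\bigl(\tau(F\mid \mathcal{F}_{\mathcal{A}})\bigr) \;=\; \tau\otimes\tau\bigl(\nabla_t F \,\big|\, \mathcal{F}_{\mathcal{A}}\bigr)\,\mathds{1}_{\mathcal{A}}(t)
\end{equation*}
for almost every $t\geq 0$. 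The right-hand side carries an explicit factor $\mathds{1}_{\mathcal{A}}(t)$, which vanishes on $\mathcal{A}^c$, so specializing to $t\in \mathcal{A}^c$ immediately gives $\nabla_t F = 0$ for almost every such $t$, which is the desired conclusion as an element of the biprocess space.

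There is essentially no obstacle here beyond bookkeeping: the only point to be careful about is that Lemma \ref{lder} is formulated as an equality in $L^2(\mathbb{R}_+,L^2(\mathcal{SC})\otimes L^2(\mathcal{SC}))$, so the conclusion $\nabla_t F = 0$ is to be read almost everywhere in $t$, which is the standard convention for the biprocess-valued gradient used throughout Section \ref{part2}. No approximation or chaotic-expansion argument is required at this stage — both steps (applying self-invariance under conditional expectation and invoking Lemma \ref{lder}) are direct, making this a one-line consequence of the commutation relation just proved.
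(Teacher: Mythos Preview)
Your proposal is correct and matches the paper's approach exactly: the paper presents this lemma as a ``direct consequence'' of Lemma~\ref{lder} without further proof, and the one-line argument you give (invariance under conditional expectation plus the indicator factor from Lemma~\ref{lder}) is precisely the intended derivation.
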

\begin{flushleft}
Note also that the we have a commutation relation between a Malliavin differentiable element and the {\it left annihilation operator}. This lemma being particularly useful for its connection with the study of  regularity properties of analytic distribution since it provides a kind of {\it infinite dimensional dualy system} in the sense of Voiculescu defined in section 5 in \cite{V} and which have deep consequences in the study of the regularity properties of the spectral measure on noncommutative polynomials evaluations in self-adjoints indeterminates , i.e. $P(y_1,\ldots,y_n)$. For example, when $y_1,\ldots,y_n$ admits a {\it dual system}, and that $P$ is self-adjoint and also an eigenvalue of the "discrete" Number operator, where $N=\sum_{j=1}^n \partial_j(\:)\sharp y_j$, Charlesworth and Shlyakhtenko proved that the analytic distribution of $P(y_1,\ldots,y_n)$ is absolutely continuous w.r.t the Lebesgue measure (c.f theorem 13 in \cite{CS}).
\end{flushleft}
\begin{lemma}\label{lma9}
Let $n\geq 0$ a fixed integer. Then for all $F\in \mathcal{P}_n$, we have:
\begin{equation}
    [r^*(h),F]=\nabla^h(F)\sharp P_1
\end{equation}
where $P_1$ is the orthogonal projection onto $1\in L^2(\mathcal{SC})$ and $\nabla^h$ is the directional Malliavin derivative: $\nabla^h:=\langle \nabla,h\rangle$ on $\mathbb{D}^{1,2}$.
\end{lemma}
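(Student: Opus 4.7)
The plan is to reduce the identity to a computation on the Fock space and then extend via induction and linearity. Note that although the surrounding text mentions the \emph{left} annihilation operator, the statement is formulated with $r^*(h)$, so I will work with the right annihilation as displayed. All operators involved are bounded, and everything can be computed on pure tensors $g_1\otimes\ldots\otimes g_m \in F(\mathcal{H}_{\mathbb{C}})$ via the identification $X \mapsto X\Omega$.

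First, I would establish the two building-block commutation identities
\begin{equation}
[r^*(h), l(e)] = \langle h, e\rangle P_1, \qquad [r^*(h), l^*(e)] = 0, \nonumber
\end{equation}
by a direct evaluation on $\Omega$ and on pure tensors of length $\geq 1$. The first follows because $r^*(h)$ and $l(e)$ act on opposite ends of a tensor, so they commute on tensors of length $\geq 1$ and only the boundary case $\Omega$ produces a nontrivial contribution equal to $\langle h, e\rangle\Omega = \langle h, e\rangle P_1 \Omega$. The second vanishes because both compositions annihilate $\Omega$, and on tensors of length $\geq 1$ they again commute for the same reason. Combining these, and using that $h,e$ are real so that $\langle h,e\rangle = \langle e,h\rangle$, I obtain
\begin{equation}
[r^*(h), S(e)] = \langle e, h\rangle P_1 = \nabla^h(S(e)) \sharp P_1, \nonumber
\end{equation}
which is the base case of the induction since $\nabla^h S(e) = \langle e,h\rangle\, 1\otimes 1$.

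Next I would argue by induction on the length $n$ for monomials $F = S(e_1)\cdots S(e_n)$. The commutator $[r^*(h),\,\cdot\,]$ is a derivation, so
\begin{equation}
[r^*(h), AB] = [r^*(h), A]\,B + A\,[r^*(h), B]. \nonumber
\end{equation}
On the other hand, the Malliavin derivative is a derivation with values in the bimodule structure, giving $\nabla^h(AB) = \nabla^h(A)\cdot B + A \cdot \nabla^h(B)$, and the operation $\sharp P_1$ distributes through this exactly as required: writing $\nabla^h(A) = \sum A_i \otimes A_i'$ and similarly for $B$, one checks that $\nabla^h(AB)\sharp P_1 = (\nabla^h(A)\sharp P_1)\,B + A\,(\nabla^h(B)\sharp P_1)$. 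Applying the inductive hypothesis to both $A$ and $B$ then yields the identity for $AB$. This propagates the base case through all monomials $S(e_1)\cdots S(e_n)$.

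Finally, since $\mathcal{P}_n$ is spanned by $\{1\}$ together with the monomials of length $\leq n$, and both sides of the desired identity are linear in $F$ (with $\nabla^h(1)=0$ and $[r^*(h),1]=0$ providing the trivial constant case), the formula extends by linearity to all of $\mathcal{P}_n$. The main subtlety is not a hard estimate but rather the bookkeeping in the first step: making sure the conventions for left/right, creation/annihilation, and the inner product's anti-linearity in the correct slot are compatible with the conventions for $\nabla^h$ and $\sharp$ fixed earlier in the paper; once those are aligned, the computation on basis vectors of $F(\mathcal{H}_{\mathbb{C}})$ is essentially immediate.
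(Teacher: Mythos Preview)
Your proof is correct and follows essentially the same approach as the paper: establish the base case $[r^*(h),S(e)]=\langle e,h\rangle P_1$, then observe that both $[r^*(h),\cdot]$ and $F\mapsto\nabla^h(F)\sharp P_1$ are derivations agreeing on generators, and extend by linearity. The only cosmetic difference is that the paper phrases the induction via Chebyshev polynomials $U_n(S(e_i))$ and the associated orthonormal system $\phi_a$, whereas you work directly with monomials $S(e_1)\cdots S(e_n)$; both routes yield the identity on a spanning set of $\mathcal{SC}_{alg}\cap\mathcal{P}_n$, after which the extension to the full $L^2$-closure $\mathcal{P}_n$ requires the Haagerup--Biane--Speicher bound (so that $L^2$-convergence controls the operator norm) together with boundedness of $\nabla$ on $\mathcal{P}_n$---a density/continuity step you should make explicit, since ``spanned'' alone does not suffice.
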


\begin{proof}
Indeed for $(e_i)_{i=1}^{\infty}$ a complete orthonormal system of $L^2_{\mathbb{R}}(\mathbb{R}_+)$, if we denote
\newline
$\Lambda=\left\{a:=(a_n)_{n\geq 1}, a_n\in \mathbb{N}, \exists M, a_n=0, \mbox{for},n>M\right\}$ the integers valued sequences such that all the terms except a finite numbers of them vanish. Then we define for $a\in \Lambda$, with $(U_n)_{n \geq 0}$, the sequence of the Chebychev polynomials of second kind.
\begin{equation}
    \phi_a=\prod_{i=1}^{\infty} U_{a_i}(S(e_i)),\nonumber
\end{equation}
Then,
\begin{equation}
    \left\{\phi_a,a\in \Lambda,\lvert a\rvert=n\right\},\nonumber
\end{equation}
is an orthonormal system in $\mathcal{H}_n$, the $n$-fold Wigner chaos. And, as a consequence, the family:
\begin{equation}
   \left\{\phi_a,a\in \Lambda \right\},
\end{equation}
 is a complete orthonormal system in $L^2(\mathcal{SC})$,
\begin{flushleft}
    Indeed it is easy to prove that (derivation property) for any integer $n\geq 1$, since it is easy to check and well known that: $[r^*(h),S(e_i)]=\langle e_i,h\rangle_{L^2(\mathbb{R}_+)}.P_1$, for all $h\in L^2_{\mathbb{R}}(\mathbb{R}_+)$.
\begin{eqnarray}\label{60}
    [r^*(h),U_n(S(e_i))]&=&  \partial_{S(e_i)}U_n(S(e_i))\sharp [r^*(h),S(e_i)]\\\nonumber
    &=&\sum_{k=1}^n \langle e_i,h\rangle_{L^2(\mathbb{R}_+)} (U_{k-1}(S(e_i))\otimes U_{n-k}(S(e_i)))\sharp P_1\nonumber\\
    &=& \nabla^h(U_n(S(e_i))\sharp P_1,\nonumber
\end{eqnarray}
This last term being exactly the directional Malliavin derivative on the direction $h\in L^2_{\mathbb{R}}(\mathbb{R}_+)$.
\end{flushleft}
Then by linearity and density it extends to the whole space since both $[r^*(h),.]$ and $\nabla^h(.)\sharp P_1$ are derivations and coincide onto this orthonormal basis which spans $\mathcal{SC}_{alg}$ which is dense in $\mathbb{D}^{1,2}$.
\qed
\end{proof}

\begin{flushleft}
Note also that there exists a free Poincaré inequality on the Wigner space (which have never been mentioned or used) and which can be seen as the infinite dimensional analog of the one's proved by Voiculescu in an unpublished note (see Dabrowski \cite{DAB}, lemma 2.2).
\end{flushleft}
\begin{theorem}
Let $F\in \mathbb{D}^{1,2}$, then we have the free Poincaré inequality:
\begin{equation}
    \lVert F-\tau(F)\rVert_2^2\leq \lVert \nabla F\rVert_{\mathcal{B}_2}^2\nonumber
\end{equation}
\end{theorem}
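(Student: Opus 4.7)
The plan is to exploit the chaotic characterization of $\mathbb{D}^{1,2}$ together with Proposition \ref{pp3}, reducing the inequality to the trivial pointwise bound $1 \leq n$ on each Wigner chaos of order $n \geq 1$.

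First I would decompose $F \in \mathbb{D}^{1,2}$ via its Wigner-Ito expansion $F = \sum_{n=0}^{\infty} I_n(f_n)$, where $f_0 = \tau(F)$. By the Wigner-Ito isometry,
\begin{equation*}
\lVert F - \tau(F)\rVert_2^2 = \sum_{n=1}^{\infty} \lVert f_n\rVert_{L^2(\mathbb{R}_+^n)}^2.
\end{equation*}
The goal is to identify $\lVert \nabla F\rVert_{\mathcal{B}_2}^2$ as $\sum_{n=1}^{\infty} n\,\lVert f_n\rVert_{L^2(\mathbb{R}_+^n)}^2$, after which the inequality is immediate since $n \geq 1$ for every $n$ in the sum.

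Next I would apply Proposition \ref{pp3} to each chaos: for almost every $t \geq 0$,
\begin{equation*}
\nabla_t I_n(f_n) = \sum_{k=1}^{n} I_{k-1} \otimes I_{n-k}(f_n^{k,t}).
\end{equation*}
The key point is the orthogonality of the summands in $L^2(\mathcal{SC}) \otimes L^2(\mathcal{SC})$: the spaces $\mathcal{H}_{k-1} \otimes \mathcal{H}_{n-k}$ and $\mathcal{H}_{k'-1} \otimes \mathcal{H}_{n'-k'}$ are mutually orthogonal whenever $(k-1,n-k) \neq (k'-1,n'-k')$, hence for distinct $(n,k)$ pairs. Using the Wigner-Ito isometry on each tensor leg,
\begin{equation*}
\lVert \nabla_t F\rVert_{L^2(\mathcal{SC}\otimes\mathcal{SC})}^2 = \sum_{n=1}^{\infty} \sum_{k=1}^{n} \lVert f_n^{k,t}\rVert_{L^2(\mathbb{R}_+^{n-1})}^2.
\end{equation*}

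Integrating in $t$ and applying Fubini, each slice contributes $\int_{\mathbb{R}_+} \lVert f_n^{k,t}\rVert_{L^2(\mathbb{R}_+^{n-1})}^2\, dt = \lVert f_n\rVert_{L^2(\mathbb{R}_+^n)}^2$, so summing over $k$ from $1$ to $n$ gives the factor of $n$:
\begin{equation*}
\lVert \nabla F\rVert_{\mathcal{B}_2}^2 = \sum_{n=1}^{\infty} n\,\lVert f_n\rVert_{L^2(\mathbb{R}_+^n)}^2 \geq \sum_{n=1}^{\infty} \lVert f_n\rVert_{L^2(\mathbb{R}_+^n)}^2 = \lVert F - \tau(F)\rVert_2^2.
\end{equation*}

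I do not see a genuine obstacle here: once the chaos-by-chaos formula for $\nabla$ and the orthogonality of the pieces $I_{k-1}\otimes I_{n-k}(f_n^{k,t})$ are in hand, the argument is a bookkeeping exercise. If anything, the mildly delicate point is justifying that the termwise computation extends to all $F \in \mathbb{D}^{1,2}$ (rather than just finite-chaos $F \in \mathcal{SC}_{alg}$); this follows from the fact that $\mathbb{D}^{1,2}$ coincides with $\{\sum I_n(f_n) : \sum n\lVert f_n\rVert^2 < \infty\}$, so finite-chaos truncations converge to $F$ in the graph norm, and both sides of the inequality are continuous in that norm.
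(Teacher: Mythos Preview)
Your proof is correct and follows exactly the ``chaotic decomposition'' route that the paper itself indicates (the paper merely states that the proof is straightforward via the chaotic decomposition or the free Clark-Ocone-Bismut formula, without spelling out the details). Your write-up fills in those details faithfully, and the identity $\lVert \nabla F\rVert_{\mathcal{B}_2}^2 = \sum_{n\geq 1} n\lVert f_n\rVert^2$ is precisely the chaotic characterization of $\mathbb{D}^{1,2}$ recorded in the paper.
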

The proof is straightforward via the chaotic decomposition or the free Clark-Ocone-Bismut formula.
\begin{remark}
It turns out that, this inequality gives an important result about the structure of von Neumann algebra generated by a free Brownian motion: it is a factor. Indeed it is well known by the work of Voiculescu that the von Neumann algebra generated by the free Brownian motion, we also use the standard notation $\mathcal{SC}=W^*(\left\{S_t,t\geq 0\right\})\simeq L(\mathbb{F}_{\infty})$ (where the second term is defined as the weak-operator closure of the left regular representation of the non-abelian free group with countably many generator) is a factor since $\mathbb{F}_{\infty}$ of is an {\it i.c.c} (infinite conjugacy class) group. The idea is to prove this (without knowing this isomorphism), as a consequence of the free Poincaré inequality on the Wigner space, as it was inspired to us by the powerful Dabrowski's techniques found in \cite{DAB}, which proved that $W^*(X_1,\ldots,X_n)$ with $X_1,\ldots,X_n$ self-adjoint operators in some tracial $W^*$-probability space is a full $\Pi_1$ factor (without property $\Gamma$) under finite free Fisher information $\Phi^*(X)<\infty$. We will prove our result using the free Malliavin calculus, which can be seen as an infinite dimensional version of the theory of non commutative derivatives over a semicircular field.
\end{remark}
\begin{flushleft}
Before stating the next result, we introduce for the non specialist reader a few notions about von Neumann algebras. 
\end{flushleft}
\begin{definition}
Let $S\subset \mathcal{B}(\mathcal{H})$, by definition its commutant is defined as:
\begin{equation}
    S^{'}=\left\{T \in \mathcal{B}(\mathcal{H}), ST=TS\right\}
, \end{equation}
\end{definition}
\begin{definition}
The center of a von Neumann algebra
$\mathcal{M}$ is defined as the intersection of $\mathcal{M}$ with its commutant $\mathcal{M}^{'}$ :
    \begin{equation}
        \mathcal{Z}(\mathcal{M})=\mathcal{M}\cap\mathcal{M}'=\mathbb{C}1,
    , \end{equation}
\end{definition}
We also remind for the interested reader, that there exists three distinct types of factors (type $I$, type $II$ and type $III$). Here we are in the setting of $II_1$ factors.
\begin{definition}
A $\Pi_1$ factor is an infinite dimensional von Neumann algebra equipped with a (unique) faithful normal tracial state, and which has a trivial center:
\begin{equation}
\mathcal{Z}(\mathcal{M})=\mathbb{C}.1,
\end{equation}
\end{definition}

\begin{theorem}
$\mathbb{D}^{1,2}\cap \mathcal{SC}$ does not contain any central element, i.e. element commuting with all bounded others.
\end{theorem}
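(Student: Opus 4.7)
The strategy is to show that any central $F = \sum_{n\geq 0} I_n(f_n)\in\mathbb{D}^{1,2}\cap\mathcal{SC}$ must satisfy $f_n = 0$ for every $n\geq 1$, so that $F = \tau(F)\cdot\mathbf{1}\in\mathbb{C}\cdot\mathbf{1}$. Since $\mathcal{SC}_{alg}$ is $\sigma$-weakly dense in $\mathcal{SC}$, centrality of $F$ is equivalent to $[F,S(h)] = 0$ for every $h\in L^2_{\mathbb{R}}(\mathbb{R}_+)$.

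First I would apply the product formula of Proposition~\ref{pprodui} term-by-term:
\begin{equation*}
[S(h),I_n(f_n)] \;=\; I_{n+1}\bigl(h\otimes f_n - f_n\otimes h\bigr) \;+\; I_{n-1}\bigl(h\frown^{1}f_n - f_n\frown^{1}h\bigr).
\end{equation*}
Summing over $n$, projecting the identity $[S(h),F]=0$ onto the $k$-th Wigner chaos, and invoking orthogonality of the chaoses yields, for every $k\geq 1$ and every $h\in L^2_{\mathbb{R}}(\mathbb{R}_+)$, the recurrence
\begin{equation*}
(\star)\qquad h\otimes f_{k-1} - f_{k-1}\otimes h + h\frown^{1}f_{k+1} - f_{k+1}\frown^{1}h \;=\; 0 \ \text{ in } L^2(\mathbb{R}_+^k).
\end{equation*}

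Next, I would exploit $(\star)$ by summing the squared $L^2(\mathbb{R}_+^k)$-norms of both sides over an orthonormal basis $(e_m)_{m\geq 1}$ of $L^2_{\mathbb{R}}(\mathbb{R}_+)$ substituted for $h$. Parseval together with the completeness identity $\sum_m e_m(s)e_m(t)=\delta(s-t)$ yields on the ``contraction'' side the uniform bound
\begin{equation*}
\sum_{m\geq 1}\bigl\|e_m\frown^{1}f_{k+1} - f_{k+1}\frown^{1}e_m\bigr\|_{L^2}^2 \;\leq\; 4\,\|f_{k+1}\|_{L^2(\mathbb{R}_+^{k+1})}^2 \;<\; \infty,
\end{equation*}
since each of the two pieces has total $L^2$-mass $\|f_{k+1}\|^2$ and the cross-term $\sum_m\langle e_m\frown^{1}f_{k+1},\,f_{k+1}\frown^{1}e_m\rangle$ collapses to a cyclic trace bounded by $\|f_{k+1}\|^2$. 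By contrast, a direct expansion gives on the ``tensor'' side
\begin{equation*}
\sum_{m\geq 1}\bigl\|e_m\otimes f_{k-1} - f_{k-1}\otimes e_m\bigr\|_{L^2}^2 \;=\; 2\|f_{k-1}\|^2\!\sum_{m\geq 1}\!1 \;-\; 2\,\mathrm{Re}\,C_{k-1},
\end{equation*}
where $C_{k-1} = \int f_{k-1}(t_1,\ldots,t_{k-1})\overline{f_{k-1}(t_2,\ldots,t_{k-1},t_1)}\,dt$ is a \emph{finite} cyclic contraction bounded in modulus by $\|f_{k-1}\|^2$. Because $L^2_{\mathbb{R}}(\mathbb{R}_+)$ is infinite-dimensional, this second sum equals $+\infty$ as soon as $f_{k-1}\neq 0$.

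Comparing the two sums via $(\star)$ forces $f_{k-1}=0$ for every $k\geq 2$, hence $f_n=0$ for every $n\geq 1$, and therefore $F = \tau(F)\cdot\mathbf{1}$. The main technical obstacle is the clean separation between the divergent diagonal contribution $2\|f_{k-1}\|^2\sum_m 1$ and the convergent cross-term $C_{k-1}$, which is made possible only by the completeness relation for the orthonormal basis; once this dichotomy is exposed, the recurrence $(\star)$ eliminates every higher chaos coefficient without any need for the free Poincar\'e inequality.
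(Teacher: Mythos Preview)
Your argument is correct and genuinely different from the paper's. The paper works with the directional Malliavin derivative: from $[F,S(e_j)]=0$ it differentiates to get $[\nabla^{e_i}F,S(e_j)]=0$ for $i\neq j$, then identifies $\nabla^{e_i}F$ with a Hilbert--Schmidt operator on $L^2(\mathcal{SC})$ and uses the fact that a compact operator commuting with a diffuse one must vanish; the free Poincar\'e inequality then forces $F$ to be constant. Your route instead stays entirely at the level of kernels: the product formula gives the two-term recurrence $(\star)$ in $L^2(\mathbb{R}_+^k)$, and the dichotomy ``tensor side diverges like $2M\lVert f_{k-1}\rVert^2$ while the contraction side is bounded by $4\lVert f_{k+1}\rVert^2$'' kills every $f_{k-1}$ directly. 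Two remarks are worth making. First, the convergence of the cyclic cross-term $C_{k-1}$ deserves one more line of justification: it is the trace of the product of two Hilbert--Schmidt operators built from $f_{k-1}$, hence trace-class and in particular finite---this is what makes the separation rigorous rather than formal. Second, and more interestingly, your argument never invokes the hypothesis $F\in\mathbb{D}^{1,2}$: only $F\in L^2(\mathcal{SC})\cap\mathcal{SC}$ is used, so you in fact prove factoriality of $\mathcal{SC}$ outright, bypassing the resolvent-regularization step the paper needs in its subsequent corollary. The trade-off is that the paper's approach is structural and transports to the general finite-Fisher-information setting via Dabrowski's techniques, whereas yours exploits the explicit Wigner chaos machinery and is specific to the free Fock space.
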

\begin{proof}
The basic idea is the following one:
\bigbreak
We denote $(e_i)_{i=1}^{\infty}$ a complete orthonormal system of $L^2_{\mathbb{R}}(\mathbb{R}_+)$,
and the directional Malliavin derivative (introduced by Mai in \cite{Mai}) in the direction $h\in L^2_{\mathbb{R}}(\mathbb{R}_+)$ and valued into the coarse correspondence, defined for $F$ a smooth functional:
\begin{eqnarray}
\nabla^h: \mathcal{SC}_{alg}&\rightarrow& L^2(\mathcal{SC})\otimes L^2(\mathcal{SC})\simeq HS(L^2(\mathcal{SC}))\\\nonumber
F&\mapsto&\langle \nabla F,h\rangle_{L^2(\mathbb{R}_+)}:=\int _{\mathbb{R}_+}\nabla_tF\:h(t)dt
\end{eqnarray}
with domain
\begin{equation}
    D(\nabla^h):=\mathcal{SC}_{alg}=*-\mbox{unital}\left\{S(h), h\in L^2_{\mathbb{R}}(\mathbb{R}_+)\right\}\nonumber
\end{equation} 

Then we can easily show that this is a unbounded closable operator (the closure is still denoted $\nabla^h$) whose domain $D(\nabla^h)$ is defined as the closure of the unital $*$-algebra generated by the free Brownian motion with respect to the directional derivative seminorm:
\begin{equation}
    \lVert Y\rVert_{1,2,h}^2=\lVert Y\rVert_2^2+\lVert \nabla^hY\rVert^2_{L^2(\tau\otimes\tau)}\nonumber
\end{equation}
And note that when $F\in \mathbb{D}^{1,2}$, $\nabla^h F=\langle \nabla F,h\rangle_{L^2(\mathbb{R}_+)} $.
\newline
Now, assume that $F\in \mathcal{Z}(\mathcal{SC})\cap \mathbb{D}^{1,2}$ , and in particular, one has for all $j\in \mathbb{N}$:
\begin{equation}
    [F,S(e_j)]=0\nonumber
\end{equation}
and this implies for any $i\neq j$, that:
\begin{equation}
    0=[\nabla^{e_i}F,S(e_j)]+[F, \nabla^{e_i}S(e_j)]\nonumber
\end{equation}
it is then standard to check that since the family is orthogonal that $\nabla^{e_i}(S(e_j))=\langle e_j,e_i\rangle_{L^2(\mathbb{R})}.1\otimes 1=0$, since $(S(e_i))_{i=1}^{\infty}$ is a (infinite) semicircular system.
\newline
Then it gives us that:
\begin{equation}
     0=[\nabla^{e_i}F,S(e_j)]\nonumber
\end{equation}
Now, one sees $\nabla^{e_i}F\in HS(L^2(\mathcal{SC}))$, an Hilbert-Schmidt operator (and a fortiori a compact one), by the usual identification recalled in the section 2 (equation \ref{HS}) of this paper, it is easily deduced, because $S(e_j)$ is a semicircular operator, it is also a diffuse operator as it as a Lebesgue absolutely continuous spectral measure (since its analytic distribution is absolutely continuous w.r.t the Lebesgue measure), and thus since a compact operator commuting with a diffuse one must be identically zero, it follows without further effort that: 
\begin{equation}
    \nabla^{e_i}F=0
\end{equation}
since $e_i$ was arbitrary, it implies that $\nabla F=0_{\mathcal{B}_2}$, and thus by the free Poincare inequality, we obtain the desired conclusion.

\end{proof}
\qedhere

\begin{flushleft}
This last result allows in fact to the factoriality by using regularisation with the help of resolvents maps associated to the free directional Ornstein-Uhlenbeck operator. However, we need to introduce a powerful lemma of Popa: corollary p.818 in \cite{popa}, refined by Dabrowski lemma 7 in \cite{DAB} which allows to obtain a quantitative estimate in $L^2$ on how a tracial, normal and completely positive map $\phi$ almost fixes the translation of an element $x\in \mathcal{M}$ by a element of the unit ball $y\in (\mathcal{M})_1$, this means evaluate $\lVert \phi(yx)-y\phi(x)\rVert_2, y\in (\mathcal{M})_1, x\in \mathcal{M}$ in terms of how this map $\phi$ almost fixes this same unit ball: $\lVert\phi(y)-y\rVert_2, u\in (\mathcal{M})_1$.
\end{flushleft}
\begin{lemma}(Popa corollary p.818 in \cite{popa} , Dabrowski lemma 7 in \cite{DAB})\label{OZA}
Let $\phi$ a unital tracial completely positive map, then for every $x,y\in \mathcal{M}$ with $\lVert y\rVert\leq 1$, we have:
\begin{equation}
    \lVert \phi(yx)-y\phi(x)\rVert_2\leq \lVert \phi\rVert^{\frac{1}{2}} \lVert x\rVert (2\lVert \phi(y)-y\rVert_2^2+2\lVert \phi(y)-y\rVert_2)^{\frac{1}{2}}
\end{equation}
\end{lemma}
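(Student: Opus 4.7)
The starting point is the decomposition
\[ \phi(yx)-y\phi(x)=\bigl(\phi(yx)-\phi(y)\phi(x)\bigr)+\bigl(\phi(y)-y\bigr)\phi(x), \]
which splits the quantity of interest into a ``Schwarz defect'' measuring the failure of multiplicativity of $\phi$ plus a ``fixing defect'' measuring how close $\phi$ is to fixing $y$. The second piece is immediate: since any unital completely positive map is an $L^{\infty}$-contraction on $\mathcal{M}$, one has $\|(\phi(y)-y)\phi(x)\|_{2}\leq \|\phi(y)-y\|_{2}\,\|\phi(x)\|\leq \|\phi(y)-y\|_{2}\,\|x\|$, and this already accounts for the $\|\phi(y)-y\|_{2}^{2}$ contribution under the square root in the target estimate.

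For the Schwarz defect I would pass to a Stinespring dilation $\phi(\cdot)=V^{*}\pi(\cdot)V$, with $\pi:\mathcal{M}\to\mathcal{B}(\mathcal{K})$ a $*$-representation and $V$ an isometry. Then $\phi(y)\phi(x)=V^{*}\pi(y)VV^{*}\pi(x)V$, so
\[ \phi(yx)-\phi(y)\phi(x)=V^{*}\pi(y)\bigl(1-VV^{*}\bigr)\pi(x)V. \]
A Cauchy--Schwarz estimate in the Hilbert--Schmidt norm associated to the canonical trace on the dilation (using that $1-VV^{*}$ is an orthogonal projection, and factoring the right-hand side as a product of two pieces one of which has operator norm at most $\|x\|$) yields the pointwise bound
\[ \|\phi(yx)-\phi(y)\phi(x)\|_{2}^{2}\leq \|\phi\|\,\|x\|^{2}\,\tau\bigl(\phi(y^{*}y)-\phi(y)^{*}\phi(y)\bigr), \]
where the right-hand side is precisely the trace of the Kadison--Schwarz defect of $y$, nonnegative by $2$-positivity of $\phi$.

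To close the argument I would exploit the traciality $\tau\circ\phi=\tau$ to rewrite the Schwarz defect in purely $L^{2}$ terms:
\[ \tau\bigl(\phi(y^{*}y)-\phi(y)^{*}\phi(y)\bigr)=\|y\|_{2}^{2}-\|\phi(y)\|_{2}^{2}\leq 2\|y\|_{2}\,\|\phi(y)-y\|_{2}\leq 2\|\phi(y)-y\|_{2}, \]
the middle step being the reverse triangle inequality combined with the $L^{2}$-contractivity of $\phi$ (itself a consequence of Kadison--Schwarz and $\tau\circ\phi=\tau$), and the last using $\|y\|_{2}\leq \|y\|\leq 1$. Combining the two pieces through $(a+b)^{2}\leq 2a^{2}+2b^{2}$ with $a=\|\phi(yx)-\phi(y)\phi(x)\|_{2}$ and $b=\|\phi(y)-y\|_{2}\|x\|$ delivers exactly the stated inequality. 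The only delicate step is the Cauchy--Schwarz bound at the level of the Stinespring dilation: one has to check that the canonical trace on the dilated algebra restricts correctly under $V^{*}\cdot V$ so that both the operator-norm factor $\|x\|$ (not $\|x\|_{2}$) and the $\|\phi\|^{1/2}$ pre-factor come out with the right powers; everything else reduces to the traciality of $\phi$ and standard inequalities.
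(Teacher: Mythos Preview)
The paper does not actually prove this lemma: it is quoted verbatim from Popa (corollary p.~818 in \cite{popa}) and Dabrowski (lemma~7 in \cite{DAB}) and used as a black box in the proof of Corollary~\ref{thf}. Your proposal is therefore not competing with anything in the paper, but it is a faithful reconstruction of the standard argument in those references: split off $(\phi(y)-y)\phi(x)$, control the remaining Schwarz defect by Kadison--Schwarz, and convert the defect into $\lVert y\rVert_2^2-\lVert\phi(y)\rVert_2^2$ via $\tau\circ\phi=\tau$.

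Two small remarks. First, your worry about a ``canonical trace on the dilated algebra'' is unnecessary: with $\phi=V^*\pi(\cdot)V$ and $P=1-VV^*$, setting $A=\phi(yx)-\phi(y)\phi(x)=V^*\pi(y)P\pi(x)V$ and $C=V^*\pi(y)P$ gives the \emph{operator} inequality $AA^*=C\,\pi(x)VV^*\pi(x^*)\,C^*\leq \lVert x\rVert^2\,CC^*=\lVert x\rVert^2\bigl(\phi(yy^*)-\phi(y)\phi(y)^*\bigr)$, and both sides already live in $\mathcal{M}$; one then just applies $\tau$. No trace on $\mathcal{B}(\mathcal{K})$ is needed. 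Second, your final combination via $(a+b)^2\leq 2a^2+2b^2$ actually produces $4\lVert x\rVert^2\lVert\phi(y)-y\rVert_2+2\lVert x\rVert^2\lVert\phi(y)-y\rVert_2^2$ rather than the stated $2\lVert x\rVert^2\lVert\phi(y)-y\rVert_2+2\lVert x\rVert^2\lVert\phi(y)-y\rVert_2^2$, so the constants do not match exactly as you claim. This is harmless for the application (only the qualitative fact that the bound vanishes as $\lVert\phi(y)-y\rVert_2\to 0$ is used in Corollary~\ref{thf}), and such constants do vary between Popa's and Dabrowski's formulations, but ``delivers exactly the stated inequality'' overstates it.
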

\begin{corollary}\label{thf}
$\mathcal{SC}:=W^*(\left\{S_t,t\geq 0\right\})$, is a factor.
\end{corollary}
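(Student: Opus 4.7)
The plan is to bootstrap from the prior theorem (no non-scalar central element lies in $\mathbb{D}^{1,2}\cap\mathcal{SC}$) to an arbitrary $z\in\mathcal{Z}(\mathcal{SC})$, by regularising $z$ with the free Ornstein-Uhlenbeck semigroup and quantifying the resulting loss of centrality via Popa's estimate (Lemma \ref{OZA}). Fix $z\in\mathcal{Z}(\mathcal{SC})$ with $\|z\|\leq 1$; subtracting $\tau(z)\cdot 1$ we may assume $\tau(z)=0$ and the goal becomes $z=0$. For each $t>0$, set $z_t:=P_tz$; by the mollifier remark, $z_t\in\mathbb{D}^{1,2}\cap\mathcal{SC}$ with $\tau(z_t)=0$ and $z_t\to z$ in $L^2(\mathcal{SC})$ as $t\to 0^+$. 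The difficulty is that $z_t$ is not central in general, so the previous theorem cannot be invoked on $z_t$ directly.

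Popa's lemma controls this defect quantitatively. Apply Lemma \ref{OZA} to the unital tracial completely positive map $\phi=P_t$ (of norm $1$) with $x=z$ and $y$ in the unit ball of $\mathcal{SC}$. Since $yz=zy$ implies $P_t(yz)=P_t(zy)$, the triangle inequality combined with the $*$-adjoint form of the lemma yields
\[
\|[y,z_t]\|_2 \;\leq\; 2\|z\|\bigl(2\|P_ty-y\|_2^2+2\|P_ty-y\|_2\bigr)^{1/2},
\]
which vanishes as $t\to 0^+$ for each fixed $y$. Specialising to $y=S(e_j)$ for an orthonormal basis $(e_i)_{i\geq 1}$ of $L^2_{\mathbb{R}}(\mathbb{R}_+)$, we see that $z_t$ becomes $L^2$-almost central along every basis direction.

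To transfer this to a bound on $\nabla z_t$, rerun the argument of the previous theorem in quantitative form. The exact identity $[\nabla^{e_i}z_t,S(e_j)]_{\mathrm{bim}}=\nabla^{e_i}\bigl([z_t,S(e_j)]\bigr)$ still holds for $i\neq j$, since $\nabla^{e_i}S(e_j)=\langle e_j,e_i\rangle\,1\otimes 1=0$. The previous theorem relied on the fact that a compact operator commuting exactly with a diffuse self-adjoint operator must vanish; here the right-hand side is only small in $L^2$, so a quantitative version is needed. The key device, as announced by the author, is to replace the semigroup $P_t$ by the resolvent of the free directional Ornstein-Uhlenbeck operator $L_{e_i}:=-(\nabla^{e_i})^{*}\nabla^{e_i}$, namely $(\lambda-L_{e_i})^{-1}$. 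This resolvent regularises $z$ in the direction $e_i$ and produces a uniform estimate of $\|\nabla^{e_i}z_t\|_{\mathrm{HS}}$ by the Popa defect, after which letting $\lambda\to\infty$ and then $t\to 0^+$ forces $\nabla^{e_i}z_t\to 0$ for every $i$.

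Summing over $i$ and invoking the free Poincaré inequality $\|z_t-\tau(z_t)\|_2^{2}\leq\|\nabla z_t\|_{\mathcal{B}_2}^{2}$, one obtains $\|z_t\|_2\to 0$; passing to the limit $t\to 0^+$ yields $z=0$, so $\mathcal{Z}(\mathcal{SC})=\mathbb{C}\cdot 1$ and $\mathcal{SC}$ is a factor. The main obstacle is precisely this quantitative step: Popa's inequality controls commutators only in $L^2(\mathcal{SC})$, whereas the desired conclusion requires smallness of $\nabla^{e_i}z_t$ in Hilbert-Schmidt norm; the directional Ornstein-Uhlenbeck resolvent, tuned to each basis direction, is what bridges this gap, echoing Dabrowski's technique in the finitely generated case.
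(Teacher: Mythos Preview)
Your proposal identifies the right ingredients (Popa's lemma, directional resolvent) but assembles them incorrectly, leaving a genuine gap at exactly the step you flag as ``the main obstacle.''

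The paper does \emph{not} regularise $z$ with the full semigroup $P_t$ and then try to estimate the resulting loss of centrality quantitatively. Instead it applies the \emph{directional} resolvent square root $\zeta_{\alpha,e_i}=\bigl(\alpha(\alpha+N^{e_i})^{-1}\bigr)^{1/2}$ directly to the central element $z$. The decisive point you miss is that for $j\neq i$ one has $\nabla^{e_i}S(e_j)=0$, hence $N^{e_i}S(e_j)=0$, hence $\zeta_{\alpha,e_i}S(e_j)=S(e_j)$ \emph{exactly}. Popa's lemma applied with $\phi=\zeta_{\alpha,e_i}$ and $y=S(e_j)$ then has \emph{zero} defect, so $\zeta_{\alpha,e_i}(zS(e_j))=\zeta_{\alpha,e_i}(z)\,S(e_j)$ and symmetrically on the other side. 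Since $z$ is central this gives $[\zeta_{\alpha,e_i}(z),S(e_j)]=0$ exactly for every $j\neq i$, and after one more differentiation $[\nabla^{e_i}\zeta_{\alpha,e_i}(z),S(e_j)]=0$ exactly. Now the compact/diffuse argument from the previous theorem applies verbatim---no quantitative version is needed---yielding $\nabla^{e_i}\zeta_{\alpha,e_i}(z)=0$. Closability of $\nabla^{e_i}$ together with $\zeta_{\alpha,e_i}(z)\to z$ in $L^2$ as $\alpha\to\infty$ then gives $z\in D(\nabla^{e_i})$ with $\nabla^{e_i}z=0$, and the Poincar\'e inequality concludes.

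By contrast, your route via $z_t=P_tz$ requires a statement of the form ``a Hilbert--Schmidt operator that almost commutes in $L^2$ with a diffuse self-adjoint operator is small in Hilbert--Schmidt norm,'' which you never prove and which is not obvious (the $L^2$-norm of $\nabla^{e_i}([z_t,S(e_j)])$ need not control $\|\nabla^{e_i}z_t\|_{\mathrm{HS}}$, as $\nabla^{e_i}$ is unbounded). Your final paragraph conflates the two regularisations, keeping both $t$ and $\lambda$, without explaining what the directional resolvent is supposed to do to $z_t$; the paper avoids this entirely by never introducing $P_t$ in the first place.
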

\begin{proof}
It only remains to remove the domain assumption, that is to prove that any central element $Z\in \mathcal{Z}(\mathcal{SC})$ is in fact in $\mathbb{D}^{1,2}$. An essential tool to be able to do this, is the closability of the free Malliavin gradient. Indeed, if we are able to find a smooth (in $\mathbb{D}^{1,2}$) net $(Z_{\alpha})_{\alpha\geq 0}$ converging in $L^2$ towards $Z$ and which also belongs to the center, the proof is obtained. 
\newline
This is essentially done through regularisation in the Malliavin calculus sense.
\newline

Indeed, let's  denote $\delta^h$, the adjoint of the directional Malliavin derivative in the direction $h\in L^2_{\mathbb{R}}(\mathbb{R}_+)$ which is a densely defined operator:
\begin{eqnarray}
\delta^h: dom(\delta^h) \subset L^2(\mathcal{SC})\otimes L^2(\mathcal{SC})&\rightarrow& L^2(\mathcal{SC})\nonumber\\
U&\mapsto& \delta(U\sharp (1\otimes h\otimes 1))
\end{eqnarray}
Moreover, one knows from Mai (c.f lemma 5.2 (b) in \cite{Mai}) that when restricted to real vectors $h\in L^2_{\mathbb{R}}(\mathbb{R}_+)$, $\nabla^h$ induces a {\it real closable derivation}. This enables us to directly use results which can be found in Cipriani and Sauvageot \cite{CipS} or the introduction by Dabrowski in \cite{DAB}.

\bigbreak
In this way, we can define the directional number operator (Laplacian) in the direction $h\in L^2_{\mathbb{R}}(\mathbb{R}_+)$, which is defined as the square of the directional Malliavin derivative:
\begin{eqnarray}
N^h: dom(N^h)\subset L^2(\mathcal{SC})&\rightarrow& L^2(\mathcal{SC})\nonumber\\
F&\mapsto& \delta^h(\nabla^hF)
\end{eqnarray}
and see that it is the generator of a {\it completely} Dirichlet form ($N^h\otimes I_n$ is also a Dirichlet form on $M_n(SC)$ for all $n\geq 0$). 
We denote $\phi_t^h:=e^{-N^ht}$ the semigroup of generator $N^h$.
\bigbreak
Now consider for each $h\in L^2_{\mathbb{R}}(\mathbb{R}_+)$, the associated resolvent maps defined for $\alpha>0$ to each directional Laplacian, and which will be denoted as 
\begin{equation}
    \eta_{\alpha,h}:=\alpha(\alpha+N^h)^{-1}
\end{equation}
and \begin{equation}
\zeta_{\alpha,h}:=\eta_{\alpha,h}^{\frac{1}{2}}
\end{equation}
which are moreover unital, tracial ($\tau\circ \eta_{\alpha,h}=\tau$), positive and completely positive and moreover contractions on $L^2(\mathcal{SC})$.
\newline
With the following properties: $Range(\eta_{\alpha,h})=dom(N^h)\subset D(\nabla^h)$ and $Range(\zeta_{\alpha,h})=dom(\nabla^h)$ and moreover that $\nabla^h\circ \zeta_{\alpha,h}$ is bounded.
\newline
Note also that we have (its an easy computation) $\lVert Z-\zeta_{\alpha,h}(Z)\rVert_2\underset{\alpha \rightarrow \infty}{\rightarrow 0}$ for all $h\in L^2_{\mathbb{R}}(\mathbb{R}_+)$.
\newline
Now by definition, we have for $Y\in \mathbb{C}\langle X(e_j)_{j\neq i}\rangle$, $\nabla^{e_i}(Y)=0$.
Thus it is straightforward from the lemma \ref{OZA} that
$\zeta_{\alpha,e_i}(FX(e_j))=\zeta_{\alpha,e_i}(F)X(e_j)$ and $\zeta_{\alpha,e_i}(X(e_j)F)=X(e_j)\zeta_{\alpha,e_i}(F)$.
\bigbreak
Thus, if we denote for $\alpha>0$ and $h\in L^2_{\mathbb{R}}(\mathbb{R}_+)$,
$\nabla^h_{\alpha}:=\alpha^{-\frac{1}{2}}\nabla^h\circ \zeta_{\alpha,h}$.
\newline
We then have for $Y\in \mathbb{C}\langle X(e_j)_{j\neq i}\rangle$ with in particular $\nabla^{e_i}(Y)=0$:
\begin{equation}
\nabla^{e_i}_{\alpha}([Z,Y])=[\nabla^{e_i}_{\alpha}(Z),Y]
\end{equation}
Thus if $Z\in \mathcal{Z}(\mathcal{SC})$, we have:
\begin{equation}
[\nabla^{e_i}_{\alpha}(Z),Y]=0
\end{equation}
By closability, we then have (by using the same arguments as below: an Hilbert-Schmidt operator commuting with a diffuse one must be identically zero) that $\nabla^{e_i}_{\alpha}(Z)=0$.
\newline
Since $\lVert Z-\zeta_{\alpha,{e_i}}(Z)\rVert_2\underset{\alpha \rightarrow \infty}{\rightarrow 0}$, we get that $Z\in D(\nabla^{e_i})$ and $\nabla^{e_i}Z=0$.

It is then straightforward to check for any $h\in L^2_{\mathbb{R}}(\mathbb{R}_+)$, (apply the same analysis by computing
\newline
$[\nabla^{h}_{\alpha}(Z),Y]=0$ with $Y$ which belongs to the $\mathbb{C}\langle X(h^{'})_{h'\in \left\{h\right\}^{\perp}}\rangle$ (in the orthogonal complement of $h$ in $L^2_{\mathbb{R}}(\mathbb{R}_+)$), that is $\nabla^h Z=0$, and we can also check easily that the map $h\mapsto \nabla^h Z$ is linear and continuous.
\newline
Thus $Z\in \mathbb{D}^{1,2}$ and $\nabla Z=0$, thus the free Poincaré inequality on the Wigner space concludes.
\qed
\end{proof}

\begin{remark}
It is also well known by the breakthrough result of Voiculescu using free entropy techniques (theorem 5.3 in \cite{V}), that this von Neumann algebra does not have regular {\it DHSA: diffuse hyperfinite abelian subalgebra}, and a fortiori no Cartan subalgebra as in a $\Pi_1$ factor, regular  Maximal abelian subalgebras ({\it MASA's}) are also {\it DHSA}. 
In their important work, Dabrowski and Ioana \cite{Di}, these authors have been able to prove various indecomposability results such as non-$L^2$ rigidity and absence of Cartan subalgebras in presence of a real closable derivation into the coarse bimodule whose domain contain a weakly dense $*$-subalgebra $\mathcal{M}_0$ and various additional conditions (but not at all restrictive) such as existence of a non-amenability set and that moreover the derivation is algebraic. Indeed Dabrowski in \cite{Dab10} proved that under the assumptions of Lipschitz conjugate variables the von Neumann algebra $M=W^*(X_1,\ldots,X_n)$ generated by $X_1,\ldots,X_n$ admits a {\it free dilation}: a deformation into $M*L(\mathbb{F}_{\infty})$, then important findings of Ioana in \cite{ioa} and Popa's deformation/rigidity theory allows them to deduce absence of Cartan subalgebras. However in the infinite dimensional case, we have not been able to prove that $L(\mathbb{F}_{\infty})$ does not have Cartan subalgebras by using again this infinite dimensional differential calculus on the Wigner space. We leave this question to an interested reader.
\end{remark}
We are for now only able to prove an infinite dimensional version in the semicircular case of a result of Shlyakhtenko \cite{S04} which gives condition on algebraic derivations to give rise via exponentiation to a one parameter semigroup of $M*L(\mathbb{Z})$ . Thus our results can be seen as an elementary infinite dimensional case of the general result of Shlyakhtenko, since in our context,  of each $S(h), h\in L^2_{\mathbb{R}}(\mathbb{R}_+)$ satisfies the most standard integration by parts, and can also be seen as an infinite system of Lipschitz conjugate variables (idea introduced by Dabrowski in \cite{Dab14}):
\begin{equation}
    \tau(FS(h))=\tau\otimes\tau(\langle \nabla F,h\rangle_{L^2(\mathbb{R}_+)})
\end{equation}
for all $h\in L^2_{\mathbb{R}}(\mathbb{R})$ and $F\in \mathcal{SC}_{alg}$ and that for almost all $t\geq 0$, $\nabla_t(S(h))=h(t).1\otimes 1\in \mathcal{SC}\otimes \mathcal{SC}$.
\begin{proposition}
    Let $M=W^*(\left\{S_t,t\geq 0\right\}\simeq L(\mathbb{F}_{\infty})$ and $M_{alg}$ be respectively the von Neumann algebra and the $*$-unital algebra generated by the free Brownian motion $S=\left\{S(h):=\int_{\mathbb{R}_+} h(s)dS(s), h\in L^2_{\mathbb{R}}(\mathbb{R}_+)\right\}$ and consider in this way, $\nabla^S$ and $\delta^S$ be respectively the free Malliavin derivative and the Skorohod integral with respect to the semicircular process $S$.
\newline
Let also $X$ denotes an independent copy of the semicircular process $S$, which generates also $L(\mathbb{F}_{\infty})$. Then there exists a one parameter semigroup of automorphism $(\alpha_t)_{t\geq 0}$ of $M*W^*(\left\{X(h),h\in L^2_{\mathbb{R}}(\mathbb{R_+})\right\})\simeq M*L(\mathbb{F}_{\infty})$, and such that:
\begin{eqnarray}
   \frac{d}{dt}\bigg|_{t=0}\alpha_t(F)&=& X_{\nabla^S(F)} , \forall F\in M_{alg},\nonumber\\
        \frac{d}{dt}\bigg|_{t=0}\alpha_t(X(h))&=&-\delta^S(h.1\otimes 1):=-S(h), \forall h\in L^2_{\mathbb{R}}(\mathbb{R}_+),\nonumber,
\end{eqnarray}
where for $F=S(e_1)\ldots S(e_n)$ with $e_1,\ldots,e_n \in L^2_{\mathbb{R}}(\mathbb{R}_+)$, we define
\newline
$X_{\nabla^S(F)}:=\sum_{k=1}^n S(e_1)\ldots S(e_{k-1})X(e_k)S(e_{k+1})\ldots S(e_n)$, and we then extend it linearly.
\end{proposition}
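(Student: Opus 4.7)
The plan is to realize $(\alpha_t)_{t\ge 0}$ as the second quantization of a one-parameter rotation of a doubled copy of $L^2_\mathbb{R}(\mathbb{R}_+)$. The key preliminary is the standard fact that the free semicircular functor converts orthogonal direct sums into free products: concretely, there is a trace-preserving isomorphism
\begin{equation}
M * W^*(\{X(h) : h \in L^2_\mathbb{R}(\mathbb{R}_+)\}) \;\simeq\; \mathcal{SC}(\mathcal{H}_\mathbb{R} \oplus \mathcal{H}_\mathbb{R}), \nonumber
\end{equation}
with $\mathcal{H}_\mathbb{R}=L^2_\mathbb{R}(\mathbb{R}_+)$, under which $S(h)$ corresponds to the semicircular indexed by $(h,0)$ and $X(h)$ to the one indexed by $(0,h)$. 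The freeness of $S$ and $X$ precisely reflects the orthogonality of the two summands, so nothing beyond the Biane--Speicher framework is needed to establish this identification.

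With this identification in hand, I would define the orthogonal transformation $O_t$ on $\mathcal{H}_\mathbb{R} \oplus \mathcal{H}_\mathbb{R}$ by the rotation $O_t(h,k) = (\cos(t)\,h - \sin(t)\,k,\;\sin(t)\,h + \cos(t)\,k)$, and set $\alpha_t := \mathrm{Ad}(\mathcal{F}(O_t))$, where $\mathcal{F}(O_t)$ is the second quantization (a unitary on the free Fock space, since $O_t$ is orthogonal). Functoriality of $\mathcal{F}$ together with $O_tO_s=O_{t+s}$ immediately yields that $(\alpha_t)$ is a strongly continuous one-parameter group of trace-preserving $*$-automorphisms of $\mathcal{SC}(\mathcal{H}_\mathbb{R}^{\oplus 2})$. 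The action on generators is obtained from the elementary identity $\mathcal{F}(O_t)\,l(v)\,\mathcal{F}(O_t)^* = l(O_tv)$ (and its analogue for $l^*$), giving
\begin{equation}
\alpha_t(S(h)) = \cos(t)\,S(h) + \sin(t)\,X(h), \qquad \alpha_t(X(h)) = -\sin(t)\,S(h) + \cos(t)\,X(h). \nonumber
\end{equation}

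It then remains only to verify the two derivative formulas at $t=0$. For the generator $X(h)$, differentiation gives $\frac{d}{dt}\big|_{t=0}\alpha_t(X(h)) = -S(h)$; and since $\nabla(1\otimes 1) = 0$, applying Proposition \ref{ppV} to $F = 1\otimes 1$ yields $\delta^S(h\cdot 1\otimes 1) = (1\otimes 1)\sharp S(h) = S(h)$, which matches. For a word $F = S(e_1)\cdots S(e_n) \in M_{alg}$, the Leibniz rule applied to $\prod_{k=1}^n \alpha_t(S(e_k))$ at $t=0$ produces exactly
\begin{equation}
\sum_{k=1}^n S(e_1)\cdots S(e_{k-1})\,X(e_k)\,S(e_{k+1})\cdots S(e_n) \;=\; X_{\nabla^S(F)}, \nonumber
\end{equation}
by the very definition of $X_{\nabla^S(F)}$. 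Extending by linearity handles all of $M_{alg}$.

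The only substantive point is really the verification that $\alpha_t$ conjugates semicircular generators as claimed; this follows from the coordinatewise action of $\mathcal{F}(O_t)$ on simple tensors and the identity $S(v)=l(v)+l^*(v)$, and is not an obstacle per se but the place where one must be careful. Given that, the rest is essentially differentiating an explicit rotation formula, so no deep analytic difficulty is expected; the proposition should be viewed as an infinite-dimensional semicircular incarnation of Shlyakhtenko's exponentiation of an algebraic derivation, with the orthogonal group action playing the role of the dilation into $M*L(\mathbb{F}_\infty)$.
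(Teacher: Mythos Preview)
Your proposal is correct and follows essentially the same approach as the paper: both define $\alpha_t$ via the rotation $\alpha_t(S(h))=\cos(t)S(h)+\sin(t)X(h)$, $\alpha_t(X(h))=-\sin(t)S(h)+\cos(t)X(h)$, then differentiate at $t=0$ and invoke the Leibniz rule for words in $M_{alg}$. Your additional step of realizing $\alpha_t$ as $\mathrm{Ad}(\mathcal{F}(O_t))$ for the orthogonal rotation $O_t$ on $\mathcal{H}_\mathbb{R}\oplus\mathcal{H}_\mathbb{R}$ is a welcome justification of why the rotation formula indeed defines a trace-preserving $*$-automorphism, a point the paper simply asserts.
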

\begin{proof}
Indeed, denote $X=\left\{X(h), h\in L^2_{\mathbb{R}}(\mathbb{R}_+)\right\}$ an independent copy of the semicircular isonormal process $S=\left\{S(h), h\in L^2_{\mathbb{R}}(\mathbb{R}_+)\right\}$, 
\newline
Then, it suffices to consider for $t\geq 0$, 
\begin{eqnarray}
    \alpha_t(S(h))&=&\cos(t)S(h)+\sin(t)X(h)\nonumber\\ 
     \alpha_t(X(h))&=&-\sin(t)S(h)+\cos(t)X(h)\nonumber
\end{eqnarray}
which is well defined and clearly an automorphism of $M*W^*(\left\{S_t,t\geq 0\right\})$, then by differentiating, we get:
\begin{eqnarray}
    \frac{d}{dt}\bigg|_{|t=0}\alpha(S(h))&=&X(h)\nonumber\\
    \frac{d}{dt}\bigg|_{|t=0}\alpha_t(X(h))&=&-S(h)=-\delta^S(h.1\otimes 1)\nonumber
\end{eqnarray}
and by using the chain rule of the free Malliavin derivative its is elementary to check that the result holds true for $F=S(e_1)\ldots S(e_n)$.
\end{proof}
\begin{flushleft}
We also provide here an interesting characterization about the triviality of projections in $\mathbb{D}^{1,2}$ which can be seen as a corollary of Guionnet and Shlyakhtenko, theorem 4.2 in \cite{GS} which states that $C^*(\left\{S_t,t\geq 0\right\})$ is projectionless, and which ensures that the distribution of any self-adjoint element of this $C^*$-algebra has a connected support. Our proposition thus shows that projections are not regular functionals in the free Malliavin calculus sense.
\begin{flushleft}

Indeed, in his celebrated monograph, Nualart (see proposition 1.2.6 in \cite{Nual}) proved an interesting proposition about existence of Malliavin differentiable idempotent in Gaussian spaces, that is to say, for any Borel set set $A$ with respect to the $\sigma$-algebra generated by a Gaussian isonormal process, the following Bernoulli random variable: $\mathds{1}_{A}$ belongs to $\mathbb{D}^{1,2}$
if and if only $\mathbb{P}(A)\in \left\{0,1\right\}$.
We propositionose here to show the free counterpart of this "{\it $0-1$ law}".
\end{flushleft}

\begin{proposition}
$\mathbb{D}^{1,2}\cap W^*(\left\{S_t,t\geq 0\right\})$ does not contain non trivial projections, that is
for $p=p^*=p^2\in W^*(\left\{S_t,t\geq 0\right\})$ a projection, then $p \in \mathbb{D}^{1,2}$ if and if only $p\in \left\{0,1\right\}$.
\end{proposition}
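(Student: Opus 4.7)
The plan is to prove the nontrivial direction, as the converse is immediate: the constants $0$ and $1$ belong to $\mathbb{D}^{1,2}$ with vanishing Malliavin gradient. So suppose $p = p^* = p^2 \in \mathbb{D}^{1,2} \cap W^*(\{S_t,\,t\geq 0\})$. My goal is to show $\nabla p = 0$ in $\mathcal{B}_2$: the free Poincar\'e inequality then gives $p = \tau(p)\cdot 1$, and the constraint $\tau(p) = \tau(p^2) = \tau(p)^2$ forces $\tau(p) \in \{0,1\}$, hence $p \in \{0,1\}$.

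First, I would extract the algebraic structure. Applying the Leibniz rule to $p = p\cdot p$ yields
\[
\nabla p \;=\; p\cdot \nabla p \,+\, \nabla p \cdot p,
\]
which rearranges to the equivalent pair of off-diagonal identities $p\cdot \nabla p = \nabla p\cdot (1-p)$ and $(1-p)\cdot \nabla p = \nabla p \cdot p$. Multiplying either one of these on the opposite side by $p$ (resp.\ $1-p$) annihilates the two ``diagonal'' blocks: $p \cdot \nabla p \cdot p = 0$ and $(1-p)\cdot\nabla p\cdot(1-p) = 0$. Consequently, for almost every $t \geq 0$,
\[
\nabla_t p \;=\; p\cdot\nabla_t p\cdot(1-p) \,+\, (1-p)\cdot\nabla_t p\cdot p,
\]
and under the identification $L^2(\mathcal{SC}\otimes\mathcal{SC}^{op})\simeq HS(L^2(\mathcal{SC}))$ recalled in Section~2, the Hilbert--Schmidt operator associated with $\nabla_t p$ is strictly off-diagonal relative to the decomposition $L^2(\mathcal{SC}) = p L^2(\mathcal{SC}) \oplus (1-p)L^2(\mathcal{SC})$; equivalently, it intertwines left multiplication by $p$ and by $1-p$.

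Second, I would kill the remaining off-diagonal part. My plan is to recycle the regularization argument from the proof of Corollary~\ref{thf}: for each $h \in L^2_{\mathbb{R}}(\mathbb{R}_+)$ and $\alpha > 0$, introduce the smoothed directional derivative $\nabla^h_\alpha(p) := \alpha^{-1/2}\,\nabla^h\bigl(\zeta_{\alpha,h}(p)\bigr)$, combine the Leibniz-derived intertwining structure on $p$ with the Popa--Dabrowski almost-commutation estimate of Lemma~\ref{OZA} to promote it to an actual commutation relation between the compact operator $\nabla^h_\alpha(p)$ and the semicircular family $\{S(e_j)\}_j$ orthogonal to $h$, and then invoke the same ``a compact operator commuting with a diffuse self-adjoint operator must vanish'' argument used in the proof of the absence of central Malliavin-differentiable elements. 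This would give $\nabla^h_\alpha(p) = 0$ for every $\alpha$; letting $\alpha \to \infty$ and using closability of $\nabla^h$ yields $\nabla^h p = 0$ for every $h$, whence $\nabla p = 0_{\mathcal{B}_2}$, and the free Poincar\'e inequality then finishes the argument.

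The hard part is the second step. Unlike the classical $0$--$1$ law of Nualart, where commutativity collapses the Leibniz identity to $(1-2F)\nabla F = 0$ and invertibility of $1-2F$ forces $\nabla F = 0$ at once, in the free setting the two-legged action of $p$ on the biprocess leaves a genuine off-diagonal kernel of the Leibniz constraint. Eliminating that kernel really requires the Malliavin-type regularization by the directional resolvents $\zeta_{\alpha,h}$ together with the Popa--Dabrowski lemma, and the delicate verification is that the off-diagonal block interacts with the semicircular operators $S(e_j)$ in a way that makes the compact/diffuse dichotomy applicable after smoothing.
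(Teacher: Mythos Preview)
Your first step is correct and matches the paper. One technical remark: since $p$ need not lie in $\mathcal{SC}_{alg}$, the bare Leibniz rule for $\nabla$ is not directly available; the paper instead applies the generalized chain rule for a smooth function $\phi\in\mathcal{I}_2(f)$ coinciding with $x\mapsto x^2$ on the spectrum of $p$, which yields the same identity $\nabla_t p = p\cdot\nabla_t p + \nabla_t p\cdot p$ and hence the vanishing of the diagonal block $p\cdot\nabla_t p\cdot p=0$.

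Your second step, however, has a genuine gap. The argument you are trying to recycle from Corollary~\ref{thf} depends entirely on the hypothesis that the element is \emph{central}: it is the commutation $[Z,S(e_j)]=0$ that, after differentiating, produces $[\nabla^{e_i}_\alpha Z, S(e_j)]=0$ for $j\neq i$, and only then does the ``a compact operator commuting with a diffuse one must vanish'' dichotomy apply. A projection $p$ enjoys no such commutation with the semicircular generators. The off-diagonal identity $\nabla_t p = p\cdot\nabla_t p\cdot(1-p) + (1-p)\cdot\nabla_t p\cdot p$ is a constraint involving $p$ alone and says nothing whatsoever about how $\nabla^h p$ (or its smoothed version $\nabla^h_\alpha p$) interacts with any $S(e_j)$. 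Lemma~\ref{OZA} is likewise of no help here: in the factoriality proof it is invoked with $y=S(e_j)$, which is \emph{exactly fixed} by the resolvent $\zeta_{\alpha,e_i}$ because $\nabla^{e_i}S(e_j)=0$, and that is precisely what upgrades almost-commutation to exact commutation. For a projection there is no analogous fixed-point relation to exploit, so the off-diagonal part of $\nabla_t p$ cannot be killed by this route.

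The paper avoids this machinery entirely. After deriving $p\cdot\nabla_t p\cdot p = 0$, it argues directly at the algebraic level --- using faithfulness and traciality of $\tau\otimes\tau$ together with the idempotence of $p$ --- that one of $p\cdot\nabla_t p$ or $\nabla_t p\cdot p$ must already vanish, and then substitutes back into the Leibniz relation to obtain $\nabla_t p = 0$, after which the free Poincar\'e inequality finishes as you indicate. No resolvents, no Popa--Dabrowski estimate, no compact/diffuse argument are needed.
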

\begin{proof}
Take a smooth approximation $\phi\in \mathcal{I}_2(f)$ of $x\mapsto x^2$ which coincides inside the support of $p$ (compactly supported on the real line since it is a bounded self-adjoint operator), $supp(p)\subset [-\rho(p),\rho(p)]$ ($\rho$ denote the spectral radius), where \begin{equation}
    \mathcal{I}_2(f)=\left\{f:\mathbb{R}\rightarrow \mathbb{C}, f(x)=\int e^{ix}\mu(dy),\int_{\mathbb{R}}\lvert y\rvert^2\lvert\mu\rvert(dy)<\infty,\right\}
\end{equation}
Then by the chain rule (note here that the assumption over $\phi$ is necessary as the chain rule for random variable with possibly infinite chaotic expansion is only-satisfied for this subset of {\it operator-Lipschitz functions}), we get for almost all $t\geq 0$:\begin{equation}\label{p6}
    \nabla_t(p)=\nabla_t (p^2)=p.\nabla_t(p)+\nabla_t(p).p\nonumber
\end{equation}
First remark by using compression on both side by "$p$", it gives:
\begin{eqnarray}
p.\nabla_t(p).p&=&p^2.\nabla_t(p).p+p\nabla_t.p^2\nonumber\\
&=&2p.\nabla_t(p).p\nonumber
\end{eqnarray}
Thus we have:
\begin{equation}\label{p1}
    p.\nabla_t(p).p=0
    \end{equation}
and in particular that:
\begin{equation}
    (p.\nabla_t(p).p)\sharp (\nabla_t(p))^*=0
    \end{equation}
Which easily implies (by faithfulness and traciality since $p$ is a projection) that at least one of the following two terms is zero:
\begin{equation}
p.\nabla_t(p)=0\quad\mathrm{or}\quad \nabla_t(p).p=0\nonumber
\end{equation}
Let's suppose that the first one is zero:
\newline
Then we get, 
\begin{equation}
\nabla_t(p)=\nabla_t(p).p\nonumber
\end{equation}
By multiplying by $p$, on the left side, we then have from \ref{p1}:
\begin{equation}
p.\nabla_t(p)=p.\nabla_t(p).p=0\nonumber
\end{equation}
The other possibility can be dealt with in the same way.
\bigbreak
Thus in any cases by injecting in \ref{p6}, we have $\nabla(p)=0$ and by the free Poincaré inequality it gives that $p=\tau(p).1$, and since $p$ is an idempotent, we get that:
\begin{equation}
\tau(p)=\tau(p)^2
\end{equation}
and finally that $p\in\left\{0,1\right\}$.
\qed
\end{proof}
\end{flushleft}
\begin{remark}
It is important to notice that we have shown a little more since we didn't use the {\it self-adjoint condition} (orthogonality): there is no bounded Malliavin differentiable idempotent. It would be very interesting to prove or disprove an analog of this result for $W^*(X_1,\ldots,X_n)$ where $X_1,\ldots,X_n$ are self-adjoint operators in $(\mathcal{M},\tau)$ a tracial $W^*$-probability space with for example finite Fisher information: $\Phi^*(X)<\infty$. This would require to have a {\it chain rule} for free difference quotients which go beyond polynomial calculus (here it is a particular feature of the {Wigner} space and that we are dealing with functionals of a semicircular system) and thus be able to consider more general classes of functions.
\end{remark}

\section{Higher-order Malliavin derivatives}
In this section, we pursue the analysis of Biane and Speicher in \cite{BS} about free Malliavin calculus by defining higher order (semicircular) {\it Sobolev-Wigner} spaces and we study the action of the Malliavin derivative onto finite Wigner chaos. We obtain in this way the free counterparts of classical important results on the Wiener space. We first recall some definitions about biprocesses and their associated $L^p$-spaces defined as the completion of simple adapted biprocesses. We then focus and introduce some notations in the case of $\mathcal{H}_{\mathbb{R}}=L^2_{\mathbb{R}}({\mathbb{R}_+})$ which will be very useful to lighten the exposure. Note that, up to much heavier notations and definitions it is also possible to extend our definitions to the general setting of "{\it abstract Wigner spaces}".
\begin{definition}
Recall now that a simple biprocess, is a piecewise constant map $t\mapsto U_t$ indexed by positive time such that for all $t\geq 0$, $U_t\in \mathcal{SC}\odot \mathcal{SC}$ (the algebraic tensor product, i.e. without completion), and $U_t=0$ for $t$ large enough.
\end{definition}
\begin{definition} 
A simple biprocess is called adapted if for all $t\geq 0$, $U_t\in \mathcal{A}_t\odot \mathcal{A}_t$.
\end{definition}
\begin{definition}
The (complex) space of simple biprocesses is endowed with the family of norms, defined for all $1\leq p\leq \infty$ as:
\begin{equation}
\lVert U\rVert_{\mathcal{B}_p}=\left(\int_{0}^{\infty}\lVert U_t\rVert^2_{L^p(\tau\otimes \tau^{op})}dt\right)^{\frac{1}{2}}
\end{equation}
\end{definition}
The completion will be denoted as $\mathcal{B}_p$ and the restriction to the closed subspace of $L^p$-integrable adapted biprocesses $\mathcal{B}_p^{a}$.
\begin{flushleft}
When $p=2$, $\mathcal{B}_2$ is an Hilbert space associated with the inner product:
\begin{equation}
\langle U,V\rangle_{\mathcal{B}_2}=\int_0^{\infty} \langle U_t,V_t\rangle_{L^2(\tau\otimes \tau^{op})}dt
\end{equation}
\end{flushleft}

\begin{definition}
We denote for $1\leq p\leq \infty$, and $k,n\geq 1$ positive integers, 
\newline
$\mathbb{M}_p^{k,n}:=L^2(\mathbb{R}^k_+,L^p(\mathcal{SC}^{\otimes n}))$, the random field of $L^p(\mathcal{SC}^{\otimes n})$ valued noncommutative random variables equipped with the following semi-norm:
\begin{equation}
\lVert U\rVert_{\mathbb{M}_p^{k,n}}=\left(\int_{\mathbb{R}^k_+}\lVert U_{t_1,\ldots,t_k}\rVert_{L^p(\tau^{\otimes n})}^2dt_1\ldots dt_k\right)^{\frac{1}{2}}\nonumber
\end{equation}
\end{definition}
\begin{remark}
We can see in this way that the class of $L^p$-biprocesses $\mathcal{B}_p$ is simply $\mathbb{M}_{p}^{1,2}$. Note that we adopt this more general definition which can be used independently to study "{\it free random fields}". Indeed we can restrict ourselves to the study of these spaces with $n=k+1$, which is sufficient for our purpose and specified in the following.
\end{remark}

\begin{definition}
We define for each $p\geq 1$, the iterated free Malliavin derivative of order "$n$" denoted $\nabla^n$ by setting for any $F\in \mathcal{SC}_{alg}$,
$\nabla^0=Id$, $\nabla^{1}=\nabla$, and for $n>1$,
\begin{eqnarray}
    \nabla^{n} &:& \mathcal{SC}_{alg}\rightarrow L^2(\mathbb{R}_+^n,\mathcal{SC}^{\odot (n+1)}))\nonumber\\
    &&F\mapsto
    (\nabla_{t_1,\ldots,t_n}^n F)_{t_1\ldots,t_n\geq 0}:=(id^{\otimes (n-1)}\otimes \nabla_{t_n})\circ \ldots \circ(id \otimes \nabla_{t_2})\circ \nabla_{t_1}
    (F)\nonumber
\end{eqnarray}
or equivalently, we have by recursion for almost all $t_1,\ldots,t_n\geq 0$,
\begin{equation}
\nabla_{t_1,\ldots,t_n}^n F=(id^{\otimes (n-1)}\otimes \nabla_{t_n})\nabla^n_{t_1,\ldots,t_{n-1}}F
\end{equation}
\end{definition}
\bigbreak
Sometimes, we will however prefer to use this following definition as the free Malliavin derivative will be shown to be an almost everywhere coassociative derivation which justify the fact to multiply by the coefficient $n!$ to get some kind of symmetrized expression. Note that it will also provide exact free counterparts of the formulas obtained on the Wiener space ({\it Stroock's formula, chaotic characterization of the Sobolev-Watanabe spaces}...).
\begin{definition}
We set $D^n, n\geq 1$, the symmetrized variant of the free Malliavin derivative by setting for $F\in \mathcal{SC}_{alg}$ :
\begin{equation}
D^nF:=n!\nabla^nF
\end{equation}
\end{definition}

\begin{remark}
It is straightforward to check that both of these map are well-defined on $\mathcal{SC}_{alg}$ and also does not depend on the choice of the representation. Note also that the free Malliavin derivative is an almost sure {\it coassociative derivation} as formulated in the next proposition. It is really important to insist on it as it provides infinite dimensional Dabrowski inequalities (section 2.2 in \cite{Dab14}): the boundedness of the directional Malliavin derivative: i.e. $L^2$-bounds of $(id \otimes \tau )\circ \nabla^h$ (see Mai remark 5.7 of \cite{Mai} for applications).
\end{remark}
\begin{proposition}\label{coas}
The free Malliavin derivative is an almost-everywhere coassociative derivation in the following sense:
\bigbreak
For all $F\in \mathcal{\mathcal{SC}}_{alg}$, and for almost all $s,t\geq 0$:
\begin{equation}
(\nabla_{t}\otimes id)\nabla_{s}F=(id\otimes \nabla_{s})\nabla_{t}F.
\end{equation}
\end{proposition}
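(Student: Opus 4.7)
By linearity, it is enough to verify the identity on words $F=S(h_1)\cdots S(h_n)$ with $h_1,\ldots,h_n\in L^2_{\mathbb{R}}(\mathbb{R}_+)$, since these span $\mathcal{SC}_{alg}$. The strategy is to compute both sides explicitly by iterating the derivation property, and to recognize that each side produces the same double sum indexed by pairs $j<k$ of positions in the word.

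First, I would compute $\nabla_s F$. Applying the derivation rule $\nabla(AB)=A\cdot\nabla B+\nabla A\cdot B$ inductively, together with $\nabla_s S(h)=h(s)\,1\otimes 1$, one obtains for almost every $s\geq 0$
\begin{equation}
\nabla_s F \;=\; \sum_{k=1}^n h_k(s)\, S(h_1)\cdots S(h_{k-1})\otimes S(h_{k+1})\cdots S(h_n).\nonumber
\end{equation}
Next, apply $\nabla_t\otimes id$: since $\nabla_t$ acts on the left leg by the same derivation rule, and leaves the right leg untouched, I get for almost every $t\geq 0$
\begin{equation}
(\nabla_t\otimes id)\nabla_s F \;=\; \sum_{1\leq j<k\leq n} h_j(t)h_k(s)\, S(h_1)\cdots S(h_{j-1})\otimes S(h_{j+1})\cdots S(h_{k-1})\otimes S(h_{k+1})\cdots S(h_n).\nonumber
\end{equation}

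Now do the symmetric computation: first compute $\nabla_t F$ by the same formula (with $k$ replaced by $j$), then apply $id\otimes\nabla_s$, which this time differentiates only the right leg $S(h_{j+1})\cdots S(h_n)$. The derivation rule gives exactly the same expression, with the summation naturally running over indices $j<k\leq n$. Matching terms shows that $(\nabla_t\otimes id)\nabla_s F$ and $(id\otimes\nabla_s)\nabla_t F$ coincide as elements of $\mathcal{SC}^{\odot 3}$ for almost all $(s,t)\in\mathbb{R}_+^2$ (the almost-everywhere qualifier comes only from the fact that the scalar coefficients $h_j(t)h_k(s)$ are defined up to null sets).

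There is no real obstacle here; the proof is essentially bookkeeping to align the tensor positions. The one point that requires some care is confirming that the pairs $(j,k)$ with $j<k$ appear in both orderings of the differentiation with the same tensor slot assignments: the derivative applied first occupies the outer (leftmost or rightmost) tensor position, while the one applied second lands between the surviving factors, so in both computations $\nabla_t$ produces the factors sitting to the left of position $j$ and between positions $j$ and $k$, while $\nabla_s$ produces the factor to the right of $k$. Once this is checked, coassociativity on $\mathcal{SC}_{alg}$ follows, which is precisely the statement of the proposition.
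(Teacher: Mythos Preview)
Your proposal is correct and follows essentially the same approach as the paper: both compute $(\nabla_t\otimes id)\nabla_s F$ and $(id\otimes\nabla_s)\nabla_t F$ explicitly on words $F=S(h_1)\cdots S(h_n)$ via the derivation rule, and identify each side with the same double sum $\sum_{1\leq j<k\leq n} h_j(t)h_k(s)\,S(h_1)\cdots S(h_{j-1})\otimes S(h_{j+1})\cdots S(h_{k-1})\otimes S(h_{k+1})\cdots S(h_n)$. The only cosmetic difference is that the paper writes its words in terms of an orthonormal basis $(e_i)$ rather than arbitrary $h_i$, which is immaterial.
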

\begin{proof}
Indeed it is sufficient to show the equality for $F=S(e_1)\ldots S(e_n)$ with $n\geq 1$ (since $\mathcal{SC}_{alg}$ consists of finite sum of this type of elements), where $(e_i)_{i=1}^{\infty}$ is an orthonormal basis of $L^2_{\mathbb{R}}(\mathbb{R}_+)$. 
\newline
Then, we have:
\begin{equation}
    \nabla_{s}F=\sum_{1\leq i_2\leq n} e_{i_2}(s).S(e_1)\ldots S(e_{i_2-1}) \otimes S(e_{i_2+1})\ldots S(e_n)\nonumber
\end{equation}
And thus:
\begin{equation}
    (\nabla_{t}\otimes id)\nabla_{s}F=\sum_{1\leq i_1<i_2\leq n} e_{i_1}(t)e_{i_2}(s).S(e_1)\ldots S(e_{i_1-1})\otimes S(e_{i_1+1}) \ldots S(e_{i_2-1})  \otimes S(e_{i_2+1})\ldots S(e_n)\nonumber
\end{equation}
We can compute the other term to get:
\begin{equation}
     \nabla_{t}F=\sum_{1\leq i_1\leq n} e_{i_1}(t).S(e_1)\ldots S(e_{i_1-1}) \otimes S(e_{i_1+1})\ldots S(e_n)\nonumber
\end{equation}
and we have:
\begin{equation}
    (id\otimes \nabla_{s})\nabla_{t}F=\sum_{1\leq i_1<i_2\leq n} e_{i_1}(t)e_{i_2}(s).S(e_1)\ldots S(e_{i_1-1})\otimes S(e_{i_1+1}) \ldots S(e_{i_2-1})  \otimes S(e_{i_2+1})\ldots S(e_n)\nonumber
\end{equation}
\bigbreak
Thus both expression coincide onto $\mathcal{SC}_{alg}$ and it concludes.
\qed
\end{proof}

\begin{remark}
Note that we might drop the subscript denoting the Hilbert space $L^2(\mathbb{R}_+)$ when we use inner products or pairings $\langle \nabla F, h\rangle$ since the context is clear here.
\end{remark}
\begin{definition}
We denote for $U\in L^2(\mathbb{R}_+^n, SC^{\odot (n+1)})\subset \mathbb{M}_{\infty}^{n,n+1}$, and $h_1,\ldots,h_n\in L^2(\mathbb{R}_+)$ the linear extension of the pairing
\begin{equation}
\langle U, h_1\otimes \ldots \otimes h_n\rangle:=\int_{\mathbb{R}_+^n}U_{t_1,\ldots,t_n} \overline{h_1(t_1)}\ldots \overline{h_n(t_n)}dt_1\ldots dt_n
\end{equation}
\end{definition}
Note that by Cauchy-Schwartz inequality we have:
\begin{equation}
\lVert \langle U,h_1\otimes \ldots \otimes h_n\rangle \rVert_{L^{\infty}(SC^{\otimes (n+1)})}\leq \lVert U\rVert_{\mathbb{M}_{\infty}^{n,n+1}}\rVert \lVert h_1\otimes\ldots\otimes h_n\rVert_{L^2(\mathbb{R}_+^n)},
\end{equation}
thus the pairing could be extended continuously to a pairing between $\mathbb{M}^{n,n+1}_{\infty}$ and $L^2(\mathbb{R}_+^n)$.
\begin{flushleft}
This enables us to provide a first interesting generalized integration-by-parts at the second-order. We will generalize this fact to higher-orders in the sequel.
\end{flushleft}
\begin{proposition}
Let $F\in \mathcal{SC}_{alg}$, $h_1,h_2\in L^2_{\mathbb{R}}(\mathbb{R}_+)$, then
\begin{equation}\label{ip1}
\tau^{\otimes 3}(\langle \nabla^2 F,h_1\otimes h_2\rangle)=\tau(FI_2(h_2\otimes h_1))
\end{equation}
\end{proposition}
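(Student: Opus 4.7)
The plan is to derive the second-order integration-by-parts identity by iterating the first-order identity (\ref{79}) twice, after replacing the multiple integral $I_2(h_2\otimes h_1)$ by its elementary expression in terms of semicirculars. The key structural ingredients are the derivation property of $\nabla$, the coassociativity of $\nabla$ (Proposition \ref{coas}), and the formula $\nabla_t(S(h))=h(t)\cdot 1\otimes 1$.

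First, I would use the product rule (Proposition \ref{pprodui}) applied to $S(h_2)S(h_1)=I_1(h_2)I_1(h_1)$ to rewrite $I_2(h_2\otimes h_1)=S(h_2)S(h_1)-\langle h_2,h_1\rangle$, so that
\begin{equation*}
\tau(FI_2(h_2\otimes h_1))=\tau(FS(h_2)S(h_1))-\langle h_2,h_1\rangle\tau(F).
\end{equation*}
Applying (\ref{79}) to the test element $FS(h_2)\in\mathcal{SC}_{alg}$ and using the derivation property together with $\nabla^{h_1}(S(h_2))=\langle h_1,h_2\rangle\cdot 1\otimes 1$, one gets
\begin{equation*}
\tau(FS(h_2)S(h_1))=\tau\otimes\tau\bigl(\nabla^{h_1}(F)\cdot S(h_2)\bigr)+\langle h_1,h_2\rangle\tau(F).
\end{equation*}
Since $h_1,h_2$ are real, $\langle h_1,h_2\rangle=\langle h_2,h_1\rangle$ and the scalar terms cancel, leaving
\begin{equation*}
\tau(FI_2(h_2\otimes h_1))=\tau\otimes\tau\bigl(\nabla^{h_1}(F)\cdot S(h_2)\bigr).
\end{equation*}

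Next, writing $\nabla^{h_1}F=A\otimes B$ in Sweedler notation (with implicit summation) and recalling that the right action is opposite multiplication on the second leg, we have $(A\otimes B)\cdot S(h_2)=A\otimes S(h_2)B$, whence by traciality
\begin{equation*}
\tau\otimes\tau(\nabla^{h_1}(F)\cdot S(h_2))=\tau(A)\,\tau(BS(h_2)).
\end{equation*}
Applying (\ref{79}) once more to $B\in\mathcal{SC}_{alg}$ gives $\tau(BS(h_2))=\tau\otimes\tau(\nabla^{h_2}B)$, so the whole expression equals
\begin{equation*}
\tau^{\otimes 3}\bigl((id\otimes\nabla^{h_2})(\nabla^{h_1}F)\bigr).
\end{equation*}
By the definition of $\nabla^2$ as $(id\otimes\nabla_{t_2})\nabla_{t_1}$ and the pairing conventions, $(id\otimes\nabla^{h_2})(\nabla^{h_1}F)=\langle\nabla^2 F,h_1\otimes h_2\rangle$, which yields (\ref{ip1}).

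The only genuine subtlety is bookkeeping of the bimodule actions: one must verify that the $S(h_2)$ entering the second application of integration-by-parts acts precisely on the second tensor leg of $\nabla^{h_1}F$, so that the ensuing derivative $\nabla^{h_2}$ lands on the second leg and combines with $\nabla^{h_1}$ into the second-order derivative $\nabla^2$ (as opposed to a leftward action that would force Proposition \ref{coas} into play). This identification is immediate from the definition of the right action as opposite multiplication on the second leg, so no essential difficulty remains; the coassociativity proposition is only implicitly reassuring here, guaranteeing that $\nabla^2$ is symmetric in the two orderings.
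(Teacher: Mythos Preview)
Your proof is correct. The argument cleanly iterates the first-order integration-by-parts (\ref{79}) twice, and the cancellation of the scalar terms is handled properly; the only cosmetic point is that the right action in this paper is $(A\otimes B)\cdot c=A\otimes Bc$ rather than $A\otimes cB$, but since you immediately invoke traciality this has no effect on the computation.

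Your route is somewhat different from both arguments in the paper. The paper's first proof is dual to yours: it computes $(\nabla^{h_1})^*\bigl((\nabla^{h_2})^*(1\otimes 1)\otimes 1\bigr)$ via the Voiculescu formula (Proposition \ref{ppV}) and identifies the result with $I_2(h_2\otimes h_1)$. Your version achieves the same thing by working on the primal side, applying (\ref{79}) directly to $FS(h_2)$ and then to the second tensor leg; this avoids invoking $\delta^h$ and the Voiculescu formula altogether, at the cost of the Sweedler bookkeeping you flag at the end. The paper's second proof is genuinely different: it expands $F=S(e_1)\cdots S(e_n)$, writes $\langle\nabla^2 F,h_1\otimes h_2\rangle$ explicitly, and matches it against two applications of the Wick formula to $\tau(FS(h_2)S(h_1))$. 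That approach is more combinatorial and makes the structure visible for general monomials, whereas yours is shorter and more conceptual, relying only on the derivation and bimodule properties of $\nabla$.
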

\begin{proof}
There is at least two distinct way to reach the conclusion. The first one is more sophisticated and relies on proving the closability of the linear operator $(id\otimes \nabla^{h_2})\circ \nabla^{h_1}$.
\newline
Indeed, rewriting the equation \ref{ip1},
it involves to prove that for $F\in \mathcal{SC}_{alg}$ we have:
\begin{equation}
\tau^{\otimes 3}((id\otimes \nabla^{h_2})\nabla^{h_1}F)=\tau(FI_2(h_2\otimes h_1))
\end{equation}
Said otherwise, it amounts to prove that, $=\left((id\otimes \nabla^{h_2})\circ \nabla^{h_1}\right)^*(1\otimes 1\otimes 1):=(\nabla^{h_1})^*((\nabla^{h_2})^*(1\otimes 1)\otimes 1)$ exists in $L^2(\mathcal{SC})$.
That is to say in the language of noncommutative derivatives that a kind a second-order conjugate variable exists in $L^2(\mathcal{SC})$, we show in fact more since the second-order conjugate variables will be bounded (even Lipschitz).
Indeed, one knows from Mai's (recalled in the proof of theorem \ref{thf}) that in this case $\nabla^{h_1}$ is a {\it real closable derivation}, 
\bigbreak
Then a simple computations show that if this term exists and is in fact equal to:
\begin{eqnarray}
(\nabla^{h_1})^*((\nabla^{h_2})^*(1\otimes 1)\otimes 1)&:=&\delta^{h_1}(\delta^{h_2}(1\otimes 1)\otimes 1)\nonumber\\
&=&\delta^{h_1}( S(h_2)\otimes 1)\nonumber\\
&=& S(h_2)S(h_1)-m_1\circ(id\otimes \tau\otimes id)(\langle \tilde{\nabla}  (S(h_2)\otimes 1),h_1\rangle)\nonumber\\
&=&S(h_2)S(h_1)-\langle h_2,h_1\rangle\nonumber\\
&:=& I_2(h_2\otimes h_1)
\end{eqnarray}
where in the last line, we used the infinite-dimensional version of the Voiculescu formula's \ref{ppV}.
\bigbreak
We propose another proof based on a direct computation.
\newline
We know from proposition \ref{coas},
that for $F=S(e_1)\ldots S(e_n)$ for $e_1,\ldots, e_n \in L^2_{\mathbb{R}}(\mathbb{R}_+)$ (not necessarily a complete orthonormal system), for some $n\geq 1$.
\begin{eqnarray}
\langle\nabla^2F,h_1\otimes h_2\rangle=
\sum_{1\leq i_1<i_2\leq n} \langle e_{i_1},h_1\rangle \langle e_{i_2},h_2\rangle.S(e_1)\ldots S(e_{i_1-1})\otimes S(e_{i_1+1}) \ldots S(e_{i_2-1})  \nonumber\\
\otimes S(e_{i_2+1})\ldots S(e_n)\nonumber
\end{eqnarray}
and by applying the trace:
\begin{eqnarray}
\label{ipp2}
\tau^{\otimes 3}(\langle\nabla^2F,h_1\otimes h_2\rangle)=
\sum_{1\leq i_1<i_2\leq n} \langle e_{i_1},h_1\rangle \langle e_{i_2},h_2\rangle.\tau(S(e_1)\ldots S(e_{i_1-1})) \nonumber\\
\tau(S(e_{i_1+1}) \ldots S(e_{i_2-1})) \tau(S(e_{i_2+1})\ldots S(e_n))\nonumber
\end{eqnarray}

Now by applying two times the Wick identity (note we used the following convention: when $i_1=n+1$, $e_{n+1}=h_2$), we get
\begin{align*}
&\tau(FX(h_2)X(h_1))\nonumber\\
&=\sum_{1\leq i_1\leq n+1} \langle e_{i_1},h_1\rangle \tau(S(e_1)\ldots X(e_{i_1-1}))\tau(S(e_{i_1+1})\ldots S(e_{n+1}))\nonumber\\
&=\sum_{1\leq i_1\leq n} \langle e_{i_1},h_1\rangle \tau(S(e_1)\ldots S(e_{i_1-1}))\tau(S(e_{i_1+1})\ldots S(h_1))+\langle h_2,h_1\rangle \tau(F)\nonumber\\
&= \sum_{1\leq i_1<i_2\leq n} \langle e_{i_1},h_1\rangle \langle e_{i_2},h_2\rangle \tau(S(e_1)\ldots X(e_{i_1-1}))\tau(S(e_{i_1+1})\ldots S(e_{i_2-1}))\ldots \tau(S(e_{i_2+1})\ldots S(e_{n}))\nonumber\\
&+\langle h_2,h_1\rangle\tau(F)
\end{align*}
Thus by comparing with the expression \ref{ipp2}, we get:
\begin{equation}
\tau(FI_2(h_2\otimes h_1)):=\tau(F[S(h_2)S(h_1)-\langle h_2,h_1\rangle])=\tau^{\otimes 3}(\langle \nabla^2 F,h_1\otimes h_2\rangle)
\end{equation}
which concludes.
\qed
\end{proof}
Note that the definition of the gradient which acts onto biprocesses is different from our first one and useful to deduce the Heisenberg commutation relation (c.f section 5.4) in \cite{BS}. Indeed this last one is our second "symmetrized" version of the free Malliavin derivative which acts onto biprocesses as following for almost all $s,t\geq 0$, $\tilde{\nabla}_t(A_s\otimes B_s):= (\nabla_{t}\otimes id+id\otimes \nabla_{t_1})(A_s\otimes B_s)=\nabla_tA_s\otimes B_s+A_s\otimes \nabla_t B_s$ (via the coassociativity). We will emphasize this use of this version of the gradient when needed, by specifying it as $\tilde{\nabla}$. Indeed, as we will see later, if we used it, it would simply and only complexify some formulas by adding some combinatorial constant in these, as in the Wigner space it is well known that this type of formulas are much simpler than on the Wigner space.
\begin{definition}
We define the non commutative semicircular Sobolev 
\newline
spaces $\mathbb{D}^{k,p}$ for any $p\geq 1$ and any natural number $k\geq 1$ by setting
on $\mathcal{SC}_{alg}$ the following family of seminorm:
\begin{equation}
\lVert F\rVert_{k,p}=\left[\lVert F\rVert_p^p+\sum_{j=1}^k\lVert \nabla^k
F\rVert^p_{\mathbb{M}_p^{k,k+1}}\right]^{\frac{1}{p}}
\end{equation}
\end{definition}
As we have introduced a {\it symmetrized} variant of the free Malliavin derivative, one can thus consider in the same vein their associated semicircular Sobolev spaces.
\begin{definition}
We define the symmetrized semicircular Sobolev spaces $\mathbb{D}^{k,p,\sigma}$ for any $p\geq 1$ and any natural number $k\geq 1$ by setting
on $\mathcal{SC}_{alg}$ the following family of seminorm:
\begin{equation}
\lVert F\rVert_{k,p,\sigma}=\left[\lVert F\rVert_p^p+\sum_{j=1}^k\lVert D^kF\rVert^p_{\mathbb{M}_p^{k,k+1}}\right]^{\frac{1}{p}}
\end{equation}
\end{definition}
With these definitions, one can then obtain the closability of these iterated gradients, as well as compatility relations that are expected when using classical Sobolev spaces.
\begin{proposition}
For any $p\geq 1$, and $k\geq 1$, an integer, $\nabla^k$ is a closable operator from $L^p(\mathcal{SC})$ to $\mathbb{M}_{p}^{n,n+1}$ and the domain of its closure is denoted $\mathbb{D}^{k,p}$. For $p=\infty$, it is a closable operator from $L^{\infty}(\mathcal{SC})$ to $\mathbb{M}_{\infty}^{n,n+1}$ for the weak-operator topology.
\end{proposition}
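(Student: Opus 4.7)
The plan is to proceed by induction on $k$, using the closability of $\nabla = \nabla^1$ from Biane--Speicher \cite{BS} as the base case, together with a higher-order integration-by-parts formula generalising the second-order identity just proven.

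First, I would establish by induction the following higher-order IBP: for any $F \in \mathcal{SC}_{alg}$, any $h_1,\ldots,h_k \in L^2_{\mathbb{R}}(\mathbb{R}_+)$, and any elementary tensor $G_0\otimes\cdots\otimes G_k$ with each $G_i \in \mathcal{SC}_{alg}$,
\[
\bigl\langle \langle \nabla^k F,\, h_1\otimes\cdots\otimes h_k\rangle,\, G_0\otimes\cdots\otimes G_k\bigr\rangle_{L^2(\tau^{\otimes(k+1)})} = \tau\bigl(F\cdot\Psi(G_0,\ldots,G_k;\, h_1,\ldots,h_k)\bigr),
\]
where $\Psi$ is an explicit element of $\mathcal{SC}_{alg}$ built by iterated applications of the first-order IBP \eqref{79} and Lemma \ref{80}, each of which slides the trace across one factor of $\nabla$. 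The coassociativity of Proposition \ref{coas} is what guarantees that the two natural orderings of the iterated derivative (``differentiate the left leg first or the right leg first'') yield the same $\Psi$, so the induction is well defined at each step.

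Closability in $L^p(\mathcal{SC})$ for $p \in [1,\infty)$ then follows by a standard duality argument: the formal adjoint $(\nabla^k)^*$, defined a priori on elementary test biprocesses of the form $h_1\otimes\cdots\otimes h_k\otimes G_0\otimes\cdots\otimes G_k$ via the right-hand side of the IBP, lands in $\mathcal{SC}_{alg}\subset L^q(\mathcal{SC})$ (where $1/p+1/q=1$), and such tensors are dense in $\mathbb{M}_q^{k,k+1}$; combined with the density of $\mathcal{SC}_{alg}$ in $L^p(\mathcal{SC})$, the existence of a densely defined adjoint yields that $\nabla^k$ is closable as an unbounded operator from $L^p$ to $\mathbb{M}_p^{k,k+1}$. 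For $p = \infty$, one uses the same IBP, but now viewed as furnishing a densely defined \emph{preadjoint} on simple bounded test biprocesses with $L^1(\mathcal{SC}^{\otimes(k+1)})$-coefficients, which is the standard criterion for weak-operator closability of $\nabla^k : \mathcal{SC}\to\mathbb{M}_\infty^{k,k+1}$.

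The main obstacle is purely combinatorial: writing down $\Psi(G_0,\ldots,G_k;\,h_1,\ldots,h_k)$ explicitly and verifying the induction step, which boils down to recognising that the argument just carried out at $k=2$ admits an obvious recursive extension thanks to Proposition \ref{coas}. Once this bookkeeping is in place, the fact that $\Psi \in \mathcal{SC}$ (so that the right-hand side of the IBP makes sense in every $L^q$) is automatic from Haagerup's inequality (Theorem \ref{33}) applied to the finite chaotic expansion of $\Psi$, and the remaining functional-analytic content reduces to the duality principle described above.
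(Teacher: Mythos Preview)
Your proposal is correct and follows essentially the same route as the paper: the paper also tests $\nabla^k F$ against elementary tensors $Z_1\otimes\cdots\otimes Z_{k+1}$ and $h_1\otimes\cdots\otimes h_k$, and invokes the iterated first-order integration by parts (Lemma \ref{80}) recursively to move all derivatives off $F$, which is exactly your $\Psi$. The only cosmetic difference is that the paper phrases closability via the direct criterion ($F_n\to 0$, $\nabla^k F_n\to U$ $\Rightarrow$ $U=0$) rather than via the densely-defined-adjoint criterion you use, but these are equivalent and rest on the same computation.
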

\begin{proof}
The proof is similar as in the case $k=1$ of Biane and Speicher, but involve much more notations.
\newline
Indeed to show the closability of $\nabla^n, n\geq 1$, consider $(F_n)_n \in \mathcal{SC}_{alg}$, a sequence of cylindrical functionals converging towards zero, i.e. $F_n\rightarrow 0$ (in $L^p(\mathcal{SC}), p\geq 1$, resp. weakly) and such that $\nabla^k F_n\rightarrow U\in \mathbb{M}_{p}^{k,k+1}$, then we have to prove that $U=0$ and it is sufficient to show
that:
\begin{equation}
\tau^{\otimes(k+1)}\left(Z^{(k+1)}\sharp\langle U,h_1\otimes \ldots\otimes h_n\rangle\right)=0
\end{equation}
for any choice of $h_1,\ldots,h_n\in L^2_{\mathbb{R}}(\mathbb{R}_+)$, and $Z_1,\ldots,Z_{k+1}\in \mathcal{SC}_{alg}$ where we denoted $Z^{(k+1)}=Z_1\otimes\ldots\otimes Z_{k+1}$ and where $\sharp$ denotes the linear extension of the multiplication onto $\mathcal{SC}^{\otimes k}\otimes \mathcal{SC}^{op}$ (i.e. $(a_1\otimes \ldots\otimes a_{k+1})\sharp (b_1\otimes\ldots\otimes b_{k+1})=a_1b_1\otimes a_2b_2\otimes\ldots \otimes b_{k+1}a_{k+1}$, $a_i,b_i \in \mathcal{SC}$, for $i=1,\ldots,k+1$).
Then by using recursively the integration by parts formula \ref{80}, one can then achieve the proof.

\end{proof}

\qedhere
\begin{theorem}
Replacing $\nabla$, by $D$ in the previous theorem, the conclusion also holds true when we consider the symmetrised semicircular Sobolev spaces whose closure are denoted ${\mathbb{D}}^{k,p,\sigma}, k,p\geq 1$, and also on $L^{\infty}(\mathcal{SC})$ for the weak operator topology.
\end{theorem}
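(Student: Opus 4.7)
The plan is to leverage the defining identity $D^{j}=j!\,\nabla^{j}$ for every $j\geq 1$: since $j!\neq 0$, each symmetrised iterated derivative is nothing but a nonzero scalar multiple of its unsymmetrised counterpart treated in the preceding proposition, so the theorem will reduce to a straightforward bookkeeping of constants.

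First, on $\mathcal{SC}_{alg}$ I would record that
\[
\lVert F\rVert_{k,p,\sigma}^{p}= \lVert F\rVert_{p}^{p}+\sum_{j=1}^{k}(j!)^{p}\lVert \nabla^{j}F\rVert^{p}_{\mathbb{M}_p^{j,j+1}},
\]
so that $\lVert\cdot\rVert_{k,p,\sigma}$ and $\lVert\cdot\rVert_{k,p}$ are equivalent seminorms with explicit constants $1$ and $(k!)^{p}$. Consequently the completions $\mathbb{D}^{k,p,\sigma}$ and $\mathbb{D}^{k,p}$ coincide as topological vector spaces, so the nontrivial content of the statement is the closability of $D^k$ itself.

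Second, to establish this closability from $L^{p}(\mathcal{SC})$ into $\mathbb{M}_{p}^{k,k+1}$ (respectively from $L^{\infty}(\mathcal{SC})$ for the weak operator topology when $p=\infty$), I would take a sequence or net $(F_n)_n\subset \mathcal{SC}_{alg}$ with $F_n\to 0$ in $L^{p}$ (resp.\ weakly) and $D^{k}F_n\to V$ in $\mathbb{M}_{p}^{k,k+1}$ (resp.\ weakly). Since $D^{k}F_n=k!\,\nabla^{k}F_n$, dividing by the nonzero scalar $k!$ yields $\nabla^{k}F_n\to V/k!$ in the same topology, and the closability of $\nabla^{k}$ established in the preceding proposition immediately forces $V/k!=0$, hence $V=0$.

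No genuine obstacle is expected here: the entire difficulty of the result is absorbed into the preceding proposition, which itself relied on the iterated integration-by-parts formula of Lemma~\ref{80}. Should one prefer a self-contained proof, one could equally well rerun that inductive integration-by-parts argument directly on $D^{k}$, the only change being a global combinatorial factor of $k!$ on the right-hand side of the key identity. Either way the symmetrisation is purely cosmetic and does not affect closability.
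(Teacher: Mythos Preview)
Your argument is correct and matches the paper's approach: the paper does not write out a proof for this theorem, treating it as an immediate consequence of the preceding proposition via the identity $D^{k}=k!\,\nabla^{k}$. Your write-up simply makes this reduction explicit, which is exactly the intended reasoning.
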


The following proposition ensures that these non commutative semicirculars Sobolev spaces behaves well with respect to compatibility relations.
\begin{proposition}
This family of seminorms satisfies the following propositionerties:
\begin{enumerate}
\item Almost-Monotonicity: $\exists\: c_{k,p,q}>0$, $\lVert F\rVert_{k,p}\leq c_{k,p,q}\lVert F\rVert_{j,q}$ for all $F\in \mathcal{SC}_{alg}$ and $k\leq j$, $p\leq q$.
    \item Compatibility: Let $k,j\geq 1$ positive integers and $p,q\geq 1$, then if $(F_n)_{n\geq 0}$ is a sequence of smooth cyclindricals functionals, $\lVert F\rVert_{k,p}\rightarrow 0$, and such that $(F_n)_{n\geq 0}$ is a Cauchy sequence for $\lVert .\rVert_{j,q}$, then by closability of $\nabla^j$ on $\mathcal{SC}_{alg}$, we have that $\lVert F_n\rVert_{j,q}\rightarrow 0$.
\end{enumerate}
The statement also holds true when replacing the seminorms by the ones constructed with $D$.
\end{proposition}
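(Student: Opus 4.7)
The plan is to establish the two parts in sequence, with part (1) reducing to a pair of routine comparisons and part (2) being a clean application of the closability theorem proven just above.

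For the almost-monotonicity in (1), first I would observe that because $\tau$ and its tensor powers are normal faithful tracial \emph{states}, the standard noncommutative H\"older inequality delivers $\lVert G\rVert_p \leq \lVert G \rVert_q$ for $p\leq q$ and any $G\in L^q(\mathcal{SC}^{\otimes n})$. Applying this fibrewise in $(t_1,\ldots,t_i)$ upgrades to the biprocess comparison $\lVert \nabla^iF\rVert_{\mathbb{M}_p^{i,i+1}} \leq \lVert \nabla^iF\rVert_{\mathbb{M}_q^{i,i+1}}$ for each $i$. Since also $\{0,1,\ldots,k\}\subset\{0,1,\ldots,j\}$, denoting the resulting $i$-th term (with $i=0$ standing for the $L^p$ norm of $F$ itself) by $a_i$ in the $p$-scale and $b_i$ in the $q$-scale, I get $\sum_{i=0}^k a_i^p \leq \sum_{i=0}^k b_i^p \leq \sum_{i=0}^j b_i^p$. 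A standard discrete Jensen-type bound $\bigl(\sum_{i=0}^j b_i^p\bigr)^{1/p} \leq (j+1)^{1/p - 1/q}\bigl(\sum_{i=0}^j b_i^q\bigr)^{1/q}$ then produces the desired inequality with $c_{k,p,q}:=(j+1)^{1/p-1/q}$.

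For the compatibility (2), let $G\in L^q(\mathcal{SC})$ denote the limit of the Cauchy sequence $(F_n)$ in the $L^q$-norm, and let $V_i\in \mathbb{M}_q^{i,i+1}$ denote the $\mathbb{M}_q^{i,i+1}$-limit of $\nabla^i F_n$ for $i=1,\ldots,j$. The hypothesis $\lVert F_n\rVert_{k,p}\to 0$ gives, in particular, $F_n\to 0$ in $L^p(\mathcal{SC})$. The key identification step is to force $G=0$. If $p\leq q$, one uses the continuous embedding $L^q(\mathcal{SC})\hookrightarrow L^p(\mathcal{SC})$ (valid because $\tau$ is a state, hence finite) to transport the $L^q$-convergence to $L^p$, so $F_n \to G$ in $L^p$ as well, and comparison with $F_n\to 0$ in $L^p$ yields $G=0$; if $p>q$ the embedding runs the other way, transporting the $L^p$-convergence of $F_n\to 0$ directly to $L^q$, whence again $G=0$. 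Once $G=0$ is in hand, the closability of $\nabla^i$ viewed as an operator from $L^q(\mathcal{SC})$ to $\mathbb{M}_q^{i,i+1}$---which is precisely the content of the closability proposition proved just above---forces $V_i=0$ for each $i=1,\ldots,j$, and therefore $\lVert F_n\rVert_{j,q}^q = \lVert F_n\rVert_q^q + \sum_{i=1}^j \lVert \nabla^iF_n\rVert_{\mathbb{M}_q^{i,i+1}}^q \to 0$.

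The only real subtlety I anticipate is the $L^p$--$L^q$ reconciliation in part (2): it rests on $\tau$ being a \emph{state} (finite tracial measure), so that one has the two-sided noncommutative inclusion between $L^p$ and $L^q$; without that point one could not identify the $L^q$-limit with the $L^p$-limit. Once this is secured, closability of each $\nabla^i$ finishes the argument mechanically. Finally, the statement for the symmetrised variant $D^i = i!\,\nabla^i$ in place of $\nabla^i$ is obtained \emph{verbatim}, since the two differ only by the multiplicative constants $i!$ which can be absorbed into a new constant $c'_{k,p,q}$.
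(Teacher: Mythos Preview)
Your proposal is correct and follows exactly the approach the paper indicates: the paper's proof is a one-line remark that the result is ``an easy consequence of the closability of the Malliavin gradient $\nabla^i, D^i, i\geq 1$, on $\mathcal{SC}_{alg}$ and non-commutative H\"older inequalities,'' and you have simply supplied the routine details of that sketch. In particular your use of the state property of $\tau$ to get the $L^p$--$L^q$ comparison, the discrete H\"older bound $(j+1)^{1/p-1/q}$, and the two-case $L^p/L^q$ reconciliation in part (2) are all the natural steps implicit in the paper's terse statement.
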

\begin{proof}
This is an easy consequence of the closability of the Malliavin gradient $\nabla^i, D^i, i\geq 1$, on $\mathcal{SC}_{alg}$ and non-commutative Holder inequalities.
\qed
\end{proof}
\qedhere
\begin{flushleft}
We can in this way, consider the adjoint of the $p$-th Malliavin derivative denoted as the free Skorokhod integral $\delta^p$ of order $p\in\mathbbm{N}_+$. We can also show (as expected) that we can recover the definition of multiple Wigner integral as Skorokhod integral of {\it deterministic multiprocesses} and show that they coincide as in the classical case (see Nourdin and Peccati section 2.7 in \cite{NP-book}).
\end{flushleft}
\begin{definition}
We define the free Skorohod integral of order $p\geq 1$ (a positive integer), as the adjoint of the free Malliavin derivative of order "$p$", the derivative being view as an unbounded closable operator from $\mathbb{D}^{p,2}$ into $\mathbb{M}_{2}^{p,p+1}$. We will denote it as $\delta^p$ being an operator from $dom(\delta^p)\subset \mathbb{M}_{2}^{p+1}$ onto $L^2(\mathcal{SC})$.
\newline
which in particular satisfies the following duality relation:
\newline
Let $u\in dom(\delta^p)$, then for all $F\in \mathcal{SC}_{alg}$, there exists a unique element denoted $\delta^p(u)$ such that:
\begin{equation}
\langle \nabla^p
 F,u\rangle_{\mathbb{M}_2^{p,p+1}}=\langle F,\delta^p(u)\rangle_{L^2(\mathcal{SC})},
\end{equation}
\end{definition}
\begin{remark}
When $p=1$, by choosing in \ref{79}, $u=h.1\otimes 1$ for $h\in L^2(\mathbb{R}_+)$, we get that
\newline
$L^2(\mathbb{R}_+).1\otimes 1:=\left\{h.1\otimes 1, h\in L^2(\mathbb{R}_+)\right\}\subset dom(\delta)$, then it is not difficult to prove that
\newline
$(L^2(\mathbb{R}_+))^{\otimes p}.1^{\otimes (p+1)}:=\left\{h.1^{\otimes (p+1)}, h\in (L^2(\mathbb{R}_+))^{\otimes p}\simeq L^2(\mathbb{R}_+^p)\right\}\subset dom(\delta^p)$.
\newline
This fact will allows us in particular to recover that multiple Wigner integral of order "$p$" are exactly given by free Skorohod integral of order "$p$" of {\it deterministic} multiprocesses. This idea was first introduce in the classical setting by Nualart and Zakai in \cite{nualZ}, and turns out to be an important fact, as various important statement on the classical case can be shown in an easier way by using this definition (eg. Hypercontractivity in the the finite Wiener chaos which are a consequence of the so-called Meyer formulas, see e.g. section 2.7 of Nourdin and Peccati \cite{NP-book}).
\end{remark}
\begin{proposition}
For any integer $p\geq 1$, and $f\in L^2(\mathbb{R}_+^p)$
\begin{equation}
    \delta^p(f.1^{\otimes (p+1)})=I_p(f)
\end{equation}
\end{proposition}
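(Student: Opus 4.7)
The plan is to verify directly that $I_p(f)$ satisfies the defining duality of $\delta^p(f\cdot 1^{\otimes(p+1)})$: for every $F\in \mathcal{SC}_{alg}$,
\[
\langle \nabla^p F, f\cdot 1^{\otimes(p+1)}\rangle_{\mathbb{M}_2^{p,p+1}} = \langle F, I_p(f)\rangle_{L^2(\mathcal{SC})}.
\]
Since both sides are continuous in $f \in L^2(\mathbb{R}_+^p)$ and in $F\in L^2(\mathcal{SC})$, I may restrict to elementary tensors $f = h_1\otimes\cdots\otimes h_p$ with $h_i\in L^2_{\mathbb{R}}(\mathbb{R}_+)$, and to $F$ lying in a single Wigner chaos $\mathcal{H}_n$, say $F = I_n(g)$ with $g\in L^2(\mathbb{R}_+^n)$. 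The whole problem then reduces to computing $\tau^{\otimes(p+1)}\bigl(\nabla^p_{t_1,\ldots,t_p} I_n(g)\bigr)$.

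To perform this computation I would iterate the formula of Proposition \ref{pp3}, using coassociativity (Proposition \ref{coas}) to organize the recursion unambiguously. A short induction on $p$ yields
\[
\nabla^p_{t_1,\ldots,t_p} I_n(g) = \sum_{1\leq k_1<\cdots<k_p\leq n} I_{k_1-1}\otimes I_{k_2-k_1-1}\otimes\cdots\otimes I_{n-k_p}\bigl(g^{k_1,\ldots,k_p}_{t_1,\ldots,t_p}\bigr),
\]
where $g^{k_1,\ldots,k_p}_{t_1,\ldots,t_p}$ denotes the partial evaluation of $g$ obtained by fixing the $k_i$-th variable to $t_i$. Applying $\tau^{\otimes(p+1)}$ kills every chaos factor of positive order; only the summand in which each gap $k_{i+1}-k_i-1$ as well as the endpoints $k_1-1$ and $n-k_p$ vanish survives. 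This forces $n=p$ and $k_i=i$, whence
\[
\tau^{\otimes(p+1)}\bigl(\nabla^p_{t_1,\ldots,t_p} I_n(g)\bigr) = \delta_{n,p}\, g(t_1,\ldots,t_p).
\]

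Integrating against $\overline{h_1(t_1)\cdots h_p(t_p)}$ produces $\delta_{n,p}\,\langle g, h_1\otimes\cdots\otimes h_p\rangle_{L^2(\mathbb{R}_+^p)}$, which by the Wigner-Ito isometry is exactly $\langle I_n(g), I_p(h_1\otimes\cdots\otimes h_p)\rangle_{L^2(\mathcal{SC})}$. This establishes the duality on the dense family $\bigcup_n \mathcal{H}_n$, and the uniqueness of the adjoint identifies $\delta^p(f\cdot 1^{\otimes(p+1)})$ with $I_p(f)$. The real work in this plan is the combinatorial step of defining the partial evaluations $g^{k_1,\ldots,k_p}_{t_1,\ldots,t_p}$ coherently and verifying via coassociativity that applying the successive derivations $\nabla_{t_i}$ in any order produces the same symmetric formula displayed above; everything else is a direct consequence of the Wigner-Ito isometry and of the vanishing of $\tau$ on the non-trivial homogeneous chaoses.
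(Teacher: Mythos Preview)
Your argument is correct but follows a genuinely different route from the paper's. The paper proceeds by induction on $p$, restricting first to $f=h^{\otimes p}$ with $\lVert h\rVert=1$ so that $I_p(f)=U_p(S(h))$; it then uses the factorisation $\delta^{p+1}=\delta\circ(\delta^p\otimes\mathrm{id})$ together with the infinite-dimensional Voiculescu formula (Proposition~\ref{ppV}) to compute $\delta(h\cdot U_p(S(h))\otimes 1)$ and match it against the Chebyshev recursion $XU_p=U_{p+1}+U_{p-1}$. You instead verify the defining duality of $\delta^p$ directly, via the identity $\tau^{\otimes(p+1)}(\nabla^p_{t_1,\ldots,t_p}I_n(g))=\delta_{n,p}\,g(t_1,\ldots,t_p)$, which is exactly the computation underpinning Proposition~\ref{proposition9} and the free Stroock formula (Theorem~\ref{Stro}) later in the paper. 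Your approach is arguably cleaner and bypasses the Voiculescu formula entirely, at the cost of front-loading the Stroock calculation; the paper's inductive proof, while more hands-on, has the merit of making the connection with the Chebyshev recursion and with higher-order conjugate variables (cf.\ the remark immediately following the proposition) completely explicit.
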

\begin{proof}
Let us prove by induction (over "$p$") that the result hold true for $f=h^{\otimes p}$, with $h\in L^2_{\mathbb{R}}(\mathbb{R}_+)$, with $\lVert h\rVert_{L^2(\mathbb{R}_+)}=1$, and thus that 
we have, \begin{equation}
    U_p(S(h))=\delta^p(h^{\otimes p}.1^{\otimes (p+1)})
\end{equation}
For $p=1$, it is clear, since $U_1=X$ and $\delta(h.1\otimes 1)=S(h)$.
\newline
Now, suppose that the result holds true for $1,2\ldots,p$, 
\newline
And note that:
$\delta^{p+1}(u)=\delta((\delta^{p}\otimes id_{L^2(\mathbb{R}_+,L^2(\mathcal{SC}))}(u))$, which implies that:
\begin{align*}
    &\delta^{(p+1)}(h^{\otimes (p+1)}.1^{\otimes (p+2)})\nonumber\\
    &=\delta (h. \delta^p(h^{\otimes p}.1^{\otimes (p+1)})\otimes 1)\nonumber\\
    &=\delta(h.I_p(h^{\otimes p})\otimes 1)\nonumber\\
    &=I_p(h^{\otimes p})S(h)-m_1\circ(id\otimes\tau\otimes id)(\langle \tilde{\nabla} (I_p(h^{\otimes p})\otimes 1),h\rangle_{L^2(\mathbb{R}_+)})\nonumber\\
     &=I_p(h^{\otimes p})S(h)-m_1\circ(id\otimes\tau\otimes id)\left( \langle h,h\rangle_{L^2(\mathbb{R}_+)} \sum_{k=1}^{p} I_{k-1}\otimes I_{p-k}(h^{\otimes (p-1)})\otimes 1\right) \nonumber\\
      &=I_p(h^{\otimes p})S(h)-m_1\left( I_{p-1}(h^{\otimes (p-1)})\otimes 1\right) \nonumber\\
    &=U_p(S(h))S(h)-U_{p-1}(S(h))\nonumber\\
    &=U_{p+1}(S(h))\nonumber
\end{align*}
where in the third line we used the induction hypothesis,in the fourth line we used the {\it infinite dimensional Voiculescu's formulas} (c.f proposition \ref{ppV}), and in the fifth line, the explicit action of the Malliavin derivative (of order 1) onto Wigner integrals (c.f proposition \ref{pp3}).
\bigbreak
Note that the passage from the fourth line to the fifth one is because Wigner integrals of order greater than or equal to one are centered, thus only term in the sum which contributes to the sum is when $k=p$, which gives just a constant (a multiple Wigner integral of order 0) and we also used the hypothesis that $\lVert h\rVert_{L^2(\mathbb{R}_+)}=1$,
Now, the general conclusion will follow by linearity and density of such elementary tensors, i.e. the linear span of the complexification $U_{\mathbb{C}}$ where $U=\left\{h^{\otimes p},h\in L^2_{\mathbb{R}}(\mathbb{R}_+)/\lVert h\rVert_{L^2(\mathbb{R}_+)}=1\right\}$ is dense in $L^2(\mathbb{R}^p_+)$ as well as the closability of $\delta^p$.
\qed
\end{proof}
\begin{remark}
The reader which knows the notion of "{\it higher-order conjugate variables}" in the sense of Voiculescu (c.f definition 3.1 in \cite{V}) might notice that we have in fact recover in a simpler way the proposition 3.8 in \cite{V} which says that the $p$-th order conjugate variable of a standard semicircular variable $S$ is given by the $p$-th Tchebychev polynomial in this semicircular variable $S$. This fact which  can be rewritten in a more sophisticated way as $\partial^{(p)^*}(1^{\otimes (p+1)})=U_p(S)$, where $\partial^{(p)}$ denotes the $p$-th iterated free-difference quotient with respect to the semicircular variable $S$ (which we remind is an unbounded and densely defined closable operator for any $p\in \mathbb{N}^*$), is also the free counterpart of the result, $\delta^p(1)=H_p$ where $\delta$ denotes the divergence operator on the Gaussian space $L^2(\gamma)$ where $d\gamma(x)=\frac{1}{\sqrt{2\pi}}e^{-x^2/2}dx$ is the standard Gaussian measure and with $H_p$ denoting the $p$-th Hermite polynomial (see Nourdin and Peccati, section 1.4 in \cite{NP-book} for a complete and didactic exposure).
\end{remark}
\begin{definition}
We define for each $p\geq 1$, the following intersection of Sobolev-Wigner spaces (we only consider here the symmetrized spaces since the first are included in these ones) which is the space of $L^p$-{\it test functionals} and defined as:

\begin{eqnarray}
    \mathbb{D}^{\infty,p,\sigma}:=\bigcap_{k=1}\mathbb{D}^{k,p,\sigma}
\end{eqnarray}
\end{definition}
\begin{definition}
We set the space of {\it test Wigner functionals} as:
\begin{equation}
    \mathbb{D}^{\infty,\sigma}:=\bigcap_{p\geq 1}\bigcap_{k=1}\mathbb{D}^{k,p,\sigma}
\end{equation}

\end{definition}
\begin{remark}
A standard application of the noncommutative Holder inequalities shows that this space is in fact an unital $*$-algebra. It is also important to notice that finite sum of multiple Wigner integrals always belongs to $\mathbb{D}^{\infty,\sigma}$.
\end{remark}
\begin{flushleft}
We can now level up the action of the free Malliavin derivative of higher-order onto some fixed Wigner chaos.
\end{flushleft}
\begin{proposition}\label{proposition9}
For $p\geq 1$, a positive integer, the action of $\nabla^p$ onto homogeneous Wigner chaos of order greater than $p$, that is for all $n\geq p$ and $F=I_n(f)$ with $f\in L^2(\mathbb{R}_+^n)$ is given by:
\begin{equation}
    \nabla^p_{t_1,\ldots,t_p}\left(I_n(f)\right)=\sum_{1\leq i_1<\ldots <i_p\leq n}I_{i_1-1}\otimes I_{i_2-i_1-1}\otimes \ldots\otimes I_{n-i_p}(f_{t_1,\ldots,t_p}^{i_1,\ldots,i_n})
\end{equation}
 and for almost all $t_1,\ldots,t_p\geq 0$, and every $1\leq i_1<\ldots <i_p\leq n$ we have:
\begin{align*}
    &f(s_1,\ldots,s_{i_1-1},\underbrace {t_{1}}_{i_1},s_{i_1+1},\ldots,s_{i_2-i_1},\underbrace{t_{p-1}}_{i_{p-1}},\ldots, \underbrace{t_{p}}_{i_p},s_{i_p+1},\ldots,s_n)\nonumber\\
    &=f_{t_1,\ldots,t_p}^{i_1,\ldots,i_p}(s_1,\ldots,s_{i_1-1},s_{i_1+1},\ldots,s_{i_2-1},\ldots s_n)\nonumber
\end{align*}
and where we regard for almost all $t_1\ldots,t_p$ fixed, and for all $i_1\ldots i_p$,
\newline
$f_{t_1,\ldots,t_p}^{i_1,\ldots,i_n}$ as an element of $L^2(\mathbb{R}^{i_1-1}_+)\otimes \ldots\otimes L^2(\mathbb{R}^{n-i_p}_+)$.
\end{proposition}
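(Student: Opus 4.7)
The plan is to proceed by induction on $p \geq 1$, using the recursive definition $\nabla^p_{t_1,\ldots,t_p} = (\mathrm{id}^{\otimes(p-1)}\otimes \nabla_{t_p})\circ \nabla^{p-1}_{t_1,\ldots,t_{p-1}}$, and by reducing everything to elementary (simple) functions $f$ where the tensor factorization becomes transparent; the general case will then follow by linearity, the Wigner--Itô isometry, and the density of simple functions in $L^2(\mathbb{R}_+^n)$ together with the closability of $\nabla^p$ established in the previous proposition.

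The base case $p=1$ is exactly Proposition \ref{pp3} of Biane--Speicher. For the inductive step, assume the formula holds at order $p-1$. Applying the hypothesis to $F = I_n(f)$, for almost all $t_1,\ldots,t_{p-1}\geq 0$,
\begin{equation*}
\nabla^{p-1}_{t_1,\ldots,t_{p-1}}(I_n(f)) = \sum_{1\leq i_1<\ldots<i_{p-1}\leq n} I_{i_1-1}\otimes \ldots \otimes I_{i_{p-1}-i_{p-2}-1}\otimes I_{n-i_{p-1}}\bigl(f^{i_1,\ldots,i_{p-1}}_{t_1,\ldots,t_{p-1}}\bigr).
\end{equation*}
Now apply $\mathrm{id}^{\otimes(p-1)}\otimes \nabla_{t_p}$: the operator acts trivially on the first $p-1$ tensor legs and, on the last leg $I_{n-i_{p-1}}\bigl(f^{i_1,\ldots,i_{p-1}}_{t_1,\ldots,t_{p-1}}\bigr)$, it is a single Malliavin derivative on a multiple Wigner integral of order $n - i_{p-1}$. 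By Proposition \ref{pp3} again, this last leg expands as
\begin{equation*}
\sum_{k=1}^{n-i_{p-1}} I_{k-1}\otimes I_{n-i_{p-1}-k}\bigl((f^{i_1,\ldots,i_{p-1}}_{t_1,\ldots,t_{p-1}})^{k}_{t_p}\bigr).
\end{equation*}

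The substitution $i_p := i_{p-1}+k$ bijects $k\in \{1,\ldots,n-i_{p-1}\}$ with $i_p \in \{i_{p-1}+1,\ldots,n\}$, so summing over $i_1<\ldots<i_{p-1}$ and then over $k$ yields exactly a sum over $1\leq i_1 <\ldots<i_p\leq n$. The resulting tensor $I_{i_1-1}\otimes\ldots\otimes I_{i_{p-1}-i_{p-2}-1}\otimes I_{k-1}\otimes I_{n-i_{p-1}-k}$ matches $I_{i_1-1}\otimes \ldots\otimes I_{i_p-i_{p-1}-1}\otimes I_{n-i_p}$ in the statement. It remains to identify the argument: the slice $(f^{i_1,\ldots,i_{p-1}}_{t_1,\ldots,t_{p-1}})^{k}_{t_p}$ fixes coordinates $i_1,\ldots,i_{p-1}$ of $f$ to $t_1,\ldots,t_{p-1}$ and then fixes coordinate $k$ of the \emph{resulting} $(n-p+1)$-variable function to $t_p$; since the freed coordinates retain their original order, the $k$-th remaining slot is precisely the original $i_p$-th coordinate, so this equals $f^{i_1,\ldots,i_p}_{t_1,\ldots,t_p}$ a.e.

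The main technical obstacle is the bookkeeping: one must verify that the position-to-index correspondence after removing the first $p-1$ arguments is consistent and that the slicing commutes almost-everywhere, which is trivially true on simple product functions $f = h_1\otimes\ldots\otimes h_n$ and extends to general $f\in L^2(\mathbb{R}_+^n)$ by the Wigner--Itô isometry and the a.e.\ interchange of sections (a Fubini-type argument). The closability of $\nabla^p$ on $\mathcal{SC}_{alg}$ ensures that the a.e.\ identity obtained on the dense set of simple $f$ passes to the full space.
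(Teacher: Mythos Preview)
Your proof is correct and follows essentially the same route as the paper: reduce to elementary tensors $f=f_1\otimes\cdots\otimes f_n$ by density and closability of $\nabla^p$, use Proposition~\ref{pp3} as the base case, and iterate $\mathrm{id}^{\otimes(p-1)}\otimes\nabla_{t_p}$ with the reindexing $i_p=i_{p-1}+k$. The only difference is organizational: you present a clean induction on $p$ with the substitution made explicit, whereas the paper works out $p=1,2$ by hand (including the same change of summation) and then appeals to ``similar computations for higher order''.
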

When $p>n$, the action of the free Malliavin derivative is trivial:
\newline
$\nabla^p_{| \mathcal{P}_n}=0$.
\begin{proof}
To explain the formula, first remark and
note that it is easily deduced onto elementary multiple Wigner integrals $U_n(S(e_j))=I_n(e_j^{\otimes n})$ for $n\geq 1$, where as previously $(e_j)_{j=1}^{\infty}$ is a complete orthonormal system of $L^2_{\mathbb{R}}(\mathbb{R}_+)$.
\bigbreak
Indeed, we have at the first order:
\begin{equation}
    \nabla_{t_1}(U_n(S(e_j)))=e_{j}(t_1)\sum_{1\leq i_1\leq n}U_{i_1-1}(S(e_j))\otimes U_{n-i_1}(S(e_j))
\end{equation}
and thus (and also remark that the higher degree of Tchebychev polynomials considered is "$n-2$" since the derivative of $U_{n-1}(S(e_j))\otimes 1$, $1\otimes U_{n-1}(S(e_j))$ which correspond to the term in the sum $k\in\left\{1,n\right\}$ trivially vanishes) from the almost everywhere coassociativity relation, we get (in the second equality, we use the change of summation $i_2\rightarrow i_2+i_1$)
\begin{eqnarray}
    \nabla^2_{t_1,t_2}(U_n(S(e_j)))&=& e_j^{\otimes 2}(t_1,t_2)\sum_{1\leq i_1\leq n}\sum_{1\leq i_2\leq n-i_1} 
    U_{i_1-1}(S(e_j))\otimes U_{i_2-1}(S(e_j))\otimes U_{n-i_1-i_2}(S(e_j))\nonumber\\
    &=&e_j^{\otimes 2}(t_1,t_2)\sum_{1<i_1<i_2\leq n}U_{i_1-1}(S(e_j))\otimes U_{i_2-i_1-1}(S(e_j))\otimes U_{n-i_2}(S(e_j))\nonumber
\end{eqnarray}
The general case can also be deduced by checking the relation for multiple Wigner integrals expressed in terns of elementary tensors.
\bigbreak
for $p=1$, it reduces to the action of the free Malliavin derivative onto multiple Wigner integrals, which is true by the proposition \ref{pp3}.
\bigbreak

By density and linearity, since $\nabla^p$ is closable, it suffices to prove the result for some $I_n(f)$ with $f$ an elementary (real valued) tensor of the form:
\newline
take $f=f_1\otimes\ldots\otimes f_n$ with each $f_i \in L^2_{\mathbb{R}}(\mathbb{R}_+)$.
\bigbreak
We obtain at first order:
\begin{equation}
    \nabla_{t_1}(I_n(f))=\sum_{1\leq i_1\leq n}f_{i_1}(t_1)I_{i_1-1}(f_1\otimes \ldots\otimes f_{i_1-1})\otimes I_{n-i_1}(f_{i_1+1}\otimes\ldots \otimes f_n)\nonumber
\end{equation}
And then by applying iteratively $id \otimes \nabla_{t_j}$, we get:
\begin{eqnarray}
    \nabla_{t_1,t_2}^2(I_n(f))=\sum_{1\leq i_1< i_2\leq n}f_{i_1}({t_1})f_{i_2}({t_2}) I_{i_1-1}(f_1\otimes \ldots \otimes f_{i_1-1})\otimes I_{i_2-i_1-1}(f_{i_1+1}\otimes \ldots \otimes f_{i_2-1})\nonumber\\
   \otimes I_{n-i_2}(f_{i_2+1}\otimes  \ldots \otimes f_n)\nonumber
\end{eqnarray}
Similar computations for higher order Malliavin derivatives lead to the result.
\qed
\end{proof}
\begin{remark}\label{3}
For sake of simplicity, we will often write the tensor product 
\newline
$I_n^{\otimes (i_1,\ldots,i_p)}:=I_{i_1-1}\otimes\ldots\ldots\otimes I_{n-i_p}$ to shorten the notations.
\end{remark}

\begin{flushleft}
As in the classical case, we can characterize the Wigner functionals which belongs to these $L^2$-Sobolev spaces via their chaotic decomposition. Here we only consider the symmetrized version as it will provide the same combinatoric appearing in the classical chaotic characterization (i.e. the number of derangement) .
\end{flushleft}
\begin{proposition}
Let's $F=\sum_{n=0}^{\infty}I_n(f_n)\in L^2(\mathcal{SC})$,
then for any integer $p\geq 1$, $F$ belongs to $\mathbb{D}^{p,2,\sigma}$, if and if only:
\begin{equation}
\sum_{n=p}^{\infty}n(n-1)\ldots(n-p+1)\lVert f_n\rVert^2_{L^2(\mathbbm{R}^n_+)}<\infty
\end{equation}
and in this case:
\begin{equation}\lVert   D^pF\rVert_{\mathbb{M}_2^{p,p+1}}=\sum_{n=p}^{\infty}n(n-1)\ldots(n-p+1)\lVert f_n\rVert^2_{L^2(\mathbbm{R}^n_+)}<\infty
\end{equation}
\end{proposition}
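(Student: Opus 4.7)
The plan is to prove this chaotic characterization by a direct computation using the explicit formula for $\nabla^p$ acting on Wigner chaoses from Proposition \ref{proposition9}, together with the orthogonality of distinct homogeneous chaoses. First, I would reduce to the unsymmetrized derivative: since $D^p = p!\,\nabla^p$, one has $\|D^p F\|^2_{\mathbb{M}_2^{p,p+1}} = (p!)^2 \|\nabla^p F\|^2_{\mathbb{M}_2^{p,p+1}}$, so it suffices to compute the $\mathbb{M}_2^{p,p+1}$-norm of $\nabla^p F$ on chaotic data. Noting that $\nabla^p$ annihilates $\mathcal{P}_{p-1}$, I write formally
\begin{equation}
\nabla^p_{t_1,\ldots,t_p} F = \sum_{n\geq p}\sum_{1\leq i_1<\ldots<i_p\leq n} I_{i_1-1}\otimes I_{i_2-i_1-1}\otimes\ldots\otimes I_{n-i_p}\bigl(f_{n,t_1,\ldots,t_p}^{i_1,\ldots,i_p}\bigr).\nonumber
\end{equation}

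The heart of the argument is the orthogonality claim. For fixed $(t_1,\ldots,t_p)$, each summand lives in a tensor product of homogeneous Wigner chaoses $\mathcal{H}_{k_1}\otimes\ldots\otimes\mathcal{H}_{k_{p+1}}$ with order vector $(k_1,\ldots,k_{p+1}) = (i_1-1,\,i_2-i_1-1,\,\ldots,\,n-i_p)$. Since the inner product on $L^2(\tau^{\otimes(p+1)})$ factorizes across tensor slots and $\mathcal{H}_k\perp\mathcal{H}_{k'}$ for $k\neq k'$, two summands are orthogonal unless their order vectors coincide in every slot. But the map $(n; i_1,\ldots,i_p)\mapsto(i_1-1,\,i_2-i_1-1,\,\ldots,\,n-i_p)$ is injective (from $(k_1,\ldots,k_{p+1})$ one recovers $n = p + \sum k_j$ and then $i_\ell = \ell + \sum_{j<\ell}k_j$), so all cross-terms vanish. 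Applying the Wigner–Ito isometry in each tensor slot, integrating in $(t_1,\ldots,t_p)\in\mathbb{R}_+^p$ with Fubini (the $L^2(\mathbb{R}_+^p)$-integral of $\|f_{n,t}^{i_1,\ldots,i_p}\|^2_{L^2(\mathbb{R}_+^{n-p})}$ is exactly $\|f_n\|^2_{L^2(\mathbb{R}_+^n)}$), and using that the number of strictly increasing multi-indices is $\binom{n}{p}$, I would obtain
\begin{equation}
\|\nabla^p F\|^2_{\mathbb{M}_2^{p,p+1}} = \sum_{n\geq p}\binom{n}{p}\|f_n\|^2_{L^2(\mathbb{R}_+^n)},\nonumber
\end{equation}
and multiplying by $(p!)^2$ recovers the combinatorial factor $p!\,n(n-1)\cdots(n-p+1)$ of the stated identity.

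For the forward direction, finiteness of the series $\sum_{n\geq p} n(n-1)\cdots(n-p+1)\|f_n\|^2$ implies that the truncations $F_N := \sum_{n=0}^{N}I_n(f_n)\in\mathcal{SC}_{alg}$ form a Cauchy sequence both in $L^2(\mathcal{SC})$ and under $\|D^p\cdot\|_{\mathbb{M}_2^{p,p+1}}$, hence in $\|\cdot\|_{p,2,\sigma}$; so $F\in\mathbb{D}^{p,2,\sigma}$ by definition of the closure. Conversely, if $F\in\mathbb{D}^{p,2,\sigma}$, pick cylindrical approximants $G_k\to F$ in $\|\cdot\|_{p,2,\sigma}$, apply the finite-chaos identity just established to each projection onto $\mathcal{P}_N$ (stable under $\nabla^p$, and on which $\nabla^p$ is bounded by Proposition 5.3.10 of \cite{BS}), and pass to the limit $k\to\infty$ and then $N\to\infty$, using closability of $D^p$ to transfer the norm identity to $F$ itself.

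The main obstacle is the bookkeeping in the orthogonality step: one must be careful that the tensor-product chaos decomposition $\mathcal{H}_{k_1}\otimes\ldots\otimes\mathcal{H}_{k_{p+1}}$, indexed by ordered $(p+1)$-tuples of chaos orders, gives a genuinely orthogonal family inside $L^2(\tau^{\otimes(p+1)})$, and that distinct summands indexed by $(n;i_1,\ldots,i_p)$ land in distinct components; once the injectivity of the order-vector map is noted, the rest of the calculation is routine application of the Wigner–Ito isometry, Fubini, and binomial counting.
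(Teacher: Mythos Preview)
Your approach is essentially the paper's: both invoke Proposition~\ref{proposition9} for the action of $\nabla^p$ on a fixed chaos, apply the tensor Wigner--It\^o isometry term by term, integrate over $\mathbb{R}_+^p$ to recover $\lVert f_n\rVert^2$, and count the $\binom{n}{p}$ strictly increasing multi-indices. You are more explicit than the paper on two points the proof there leaves implicit: the vanishing of all cross-terms (which you justify cleanly via injectivity of $(n;i_1,\ldots,i_p)\mapsto(i_1-1,i_2-i_1-1,\ldots,n-i_p)$ into the lattice of tensor-chaos orders), and the actual closure argument for both directions of the ``if and only if''.

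One remark on the constant: since $D^p=p!\,\nabla^p$ and your computation gives $\lVert\nabla^p I_n(f)\rVert^2_{\mathbb M_2^{p,p+1}}=\binom{n}{p}\lVert f\rVert^2$, you correctly obtain $\lVert D^p I_n(f)\rVert^2_{\mathbb M_2^{p,p+1}}=(p!)^2\binom{n}{p}\lVert f\rVert^2=p!\cdot n(n-1)\cdots(n-p+1)\lVert f\rVert^2$, which carries an extra factor $p!$ relative to the displayed identity (whose left-hand side should also be squared). The paper's own proof, which simply ends at the observation $p!\binom{n}{p}=n(n-1)\cdots(n-p+1)$, does not resolve this discrepancy either; your arithmetic is the correct one.
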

\begin{proof}
 The reader may also notice that as expected of a derivation of order $p\geq 1$, the elements of the finite Wigner chaos of order stricly less than "$p$" vanishes. It is then only useful to consider the case $F=I_n(f)$, $f\in L^2(\mathbb{R}_+^n), n\geq p$.

\bigbreak
From an approximation argument and the action of the free Malliavin derivative of order $p\geq 1$, onto homogeneous Wigner chaos, it is then sufficient to remark that given any $1\leq i_1\ldots <i_p\leq n$, and $t_1\ldots,t_p$ fixed, we have from the Wigner-Ito isometry for multiple Wigner-Ito integral which is obviously extended to the tensor case, that the $L^2$-norm given by $\tau^{\otimes (p+1)}$ is:
\begin{equation}
    \bigg\lVert I_{i_1-1}\otimes I_{i_2-i_1-1}\otimes \ldots\otimes I_{n-i_p}(f_{t_1,\ldots,t_p}^{i_1,\ldots,i_n})\bigg\rVert_{L^2(\tau^{\otimes (p+1)})}^2=\lVert f^{i_1,\ldots,i_n}_{t_1,\ldots,t_p}\rVert_{L^2(\mathbb{R}_+^{n-p})}^2
\end{equation}
where the function $f_{t_1,\ldots,t_p}^{i_1,\ldots,i_n}\in L^2(\mathbb{R}_+^{n-p})$ is seen here as a square integrable function in $n-p$ variables.
\newline
By integrating over $\mathbb{R}_+^p$, for each $i_1,\ldots, i_p$ fixed it gives the same contribution which is exactly the $L^2$ norm of the function $f$.
\begin{equation}
     \int_{\mathbb{R}^p_+}\bigg\lVert I_{i_1-1}\otimes I_{i_2-i_1-1}\otimes \ldots\otimes I_{n-i_p}(f_{t_1,\ldots,t_p}^{i_1,\ldots,i_n})\bigg\rVert_{L^2(\tau^{\otimes (p+1)})}^2dt_1\ldots dt_p=\lVert f\rVert_{L^2(\mathbb{R}^n_+)}^2.
\end{equation}
\newline
We are then left to evaluate the cardinal of $\sharp\left\{1\leq i_1<\ldots <i_p\leq n\right\}$, which is equal to $\binom{n}{p}$.
\newline
Now it suffice to remark that $p!\binom{n}{p}=n(n-1)\ldots(n-p+1)$
which concludes.
\qed
\end{proof}
As a easy consequence of the two previous propositions, we have:
\begin{proposition}
Let $F\in \mathbb{D}^{2,p,\sigma}$, if $\nabla^p(F)=0$, then $F\in \mathcal{P}_{p-1}$.
\end{proposition}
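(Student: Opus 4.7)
The plan is to read off the conclusion directly from the chaotic characterization of $\mathbb{D}^{p,2,\sigma}$ proved just above. Note first that since $D^p = p!\,\nabla^p$, the hypothesis $\nabla^p F = 0$ in $\mathbb{M}_2^{p,p+1}$ is equivalent to $D^p F = 0$ in the same space, so we may freely work with either version of the iterated gradient.

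Write the chaotic expansion $F = \sum_{n=0}^{\infty} I_n(f_n) \in L^2(\mathcal{SC})$. Since $F \in \mathbb{D}^{p,2,\sigma}$, the preceding proposition (chaotic characterization) gives the convergence of $\sum_{n=p}^{\infty} n(n-1)\cdots (n-p+1)\,\|f_n\|_{L^2(\mathbb{R}_+^n)}^2$, and more importantly
\begin{equation}
\|D^p F\|_{\mathbb{M}_2^{p,p+1}}^2 = \sum_{n=p}^{\infty} n(n-1)\cdots(n-p+1)\,\|f_n\|_{L^2(\mathbb{R}_+^n)}^2. \nonumber
\end{equation}
(Here I am using that the explicit formula for $\nabla^p_{t_1,\ldots,t_p}(I_n(f_n))$ from Proposition \ref{proposition9}, combined with the Wigner-Itô isometry on the tensor Hilbert space $\mathcal{H}_{i_1-1}\otimes \cdots \otimes \mathcal{H}_{n-i_p}$ and integration of the variables $t_1,\ldots,t_p$, gives the stated identity; the combinatorial factor $n(n-1)\cdots(n-p+1)$ is $p!\binom{n}{p}$, which in turn is $(p!)^2$ times the $\binom{n}{p}$ terms appearing in $\nabla^p$ — the factor $(p!)^2 = p! \cdot p!$ absorbing one $p!$ from the $D^p = p!\nabla^p$ convention when computed in squared norm.)

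Under the hypothesis $\nabla^p F = 0$, and hence $D^p F = 0$, the display above forces
\begin{equation}
n(n-1)\cdots(n-p+1)\,\|f_n\|_{L^2(\mathbb{R}_+^n)}^2 = 0 \qquad \text{for every } n \geq p. \nonumber
\end{equation}
For every $n \geq p$ the prefactor $n(n-1)\cdots(n-p+1)$ is strictly positive, so we conclude $f_n = 0$ in $L^2(\mathbb{R}_+^n)$ for all $n \geq p$. Consequently the chaotic expansion of $F$ truncates:
\begin{equation}
F = \sum_{n=0}^{p-1} I_n(f_n) \in \bigoplus_{n=0}^{p-1} \mathcal{H}_n = \mathcal{P}_{p-1}, \nonumber
\end{equation}
which is the desired conclusion. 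There is no serious obstacle here: the proposition is really a corollary of the chaotic characterization, and the only point that requires any care is checking that the isometric computation of $\|\nabla^p F\|_{\mathbb{M}_2^{p,p+1}}^2$ accounts correctly for the $\binom{n}{p}$ summands appearing in Proposition \ref{proposition9} and for the $p!$ discrepancy between $\nabla^p$ and $D^p$; once that bookkeeping is in place, the vanishing of each $f_n$ for $n\geq p$ is immediate.
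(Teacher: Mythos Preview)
Your proof is correct and follows exactly the route the paper intends: the paper gives no detailed argument here, merely stating that the proposition is ``an easy consequence of the two previous propositions,'' and your write-up is precisely that deduction from the chaotic characterization of $\mathbb{D}^{p,2,\sigma}$. One small remark: the parenthetical bookkeeping about the factor $(p!)^2$ versus $p!\binom{n}{p}$ is a bit muddled (and indeed the exact constant in the paper's norm formula is itself delicate), but this is irrelevant to your argument, which only needs that the weight attached to $\|f_n\|^2$ is strictly positive for every $n\geq p$.
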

\begin{flushleft}
Now we will explicit the main theorem of this section which is the free analog of the well known "Stroock's" formula on the Wiener space first proved by Stroock in \cite{stroock}, and which explicitly gives the chaotic decomposition of an infinitely smooth $L^2$-functional in terms of its iterated Malliavin derivatives.

\end{flushleft}
\begin{theorem}(Free Stroock's formula\label{Stro})
\newline
Let $F\in {\mathbbm{D}}^{\infty,2,\sigma}$, with chaotic expansion $F=\sum_{n=0}^{\infty} I_n(f_n)$, then for all $n\geq 0$ and almost all (in the sense of the Lebesgue measure) $t_1,\ldots,t_n\geq 0$:
\begin{equation}
    f_n(t_1,\ldots,t_n)=\tau^{\otimes ({n+1})}\left(\nabla_{t_1,\ldots,t_n}^nF\right)
\end{equation}
Or the symmetrized version
\begin{equation}
    f_n(t_1,\ldots,t_n)=\frac{1}{n!}\tau^{\otimes ({n+1})}\left(D_{t_1,\ldots,t_n}^nF\right)
\end{equation}
\end{theorem}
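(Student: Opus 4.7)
The plan is to reduce the formula to the explicit action of $\nabla^n$ on a single multiple Wigner integral (Proposition \ref{proposition9}) and then observe that applying the vacuum trace kills all but one combinatorial term.

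First I would fix $n \geq 0$ and test the formula on $F = I_m(f_m)$ for each $m \geq 0$. If $m < n$, then $\nabla^n I_m(f_m) = 0$ by Proposition \ref{proposition9} (the derivative of order exceeding the chaos degree vanishes), so the right-hand side is $0$, which matches the fact that $f_m$ has no component in the $n$-th chaos. If $m \geq n$, I would insert the explicit formula
\begin{equation*}
\nabla^n_{t_1,\ldots,t_n} I_m(f_m) = \sum_{1 \leq i_1 < \ldots < i_n \leq m} I_{i_1-1} \otimes I_{i_2-i_1-1} \otimes \ldots \otimes I_{m-i_n}\bigl(f_m^{i_1,\ldots,i_n}{}_{t_1,\ldots,t_n}\bigr)
\end{equation*}
and then apply $\tau^{\otimes (n+1)}$. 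Since $\tau \circ I_k = 0$ for every $k \geq 1$, a term in the above sum contributes nothing unless \emph{every} block has degree zero, i.e. $i_1-1 = i_2-i_1-1 = \cdots = i_n-i_{n-1}-1 = m-i_n = 0$. This system forces $i_j = j$ for all $j$ and $m = n$. Hence for $m > n$ the trace annihilates the whole sum, and for $m = n$ exactly one tuple survives, yielding $\tau^{\otimes (n+1)}(\nabla^n_{t_1,\ldots,t_n} I_n(f_n)) = f_n(t_1,\ldots,t_n)$ for almost every $(t_1,\ldots,t_n)$.

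Next I would pass from single chaoses to the full series $F = \sum_{m=0}^\infty I_m(f_m)$. Because $F \in \mathbb{D}^{\infty,2,\sigma}$, in particular $F \in \mathbb{D}^{n,2,\sigma}$, the partial sums $F_N = \sum_{m=0}^N I_m(f_m)$ converge to $F$ in the graph norm $\|\cdot\|_{n,2,\sigma}$, so by closability of $\nabla^n$ on $\mathcal{SC}_{alg}$ we have $\nabla^n F_N \to \nabla^n F$ in $\mathbb{M}_2^{n,n+1}$. The trace $\tau^{\otimes (n+1)}$ is a bounded linear functional on each fibre $\mathcal{SC}^{\otimes (n+1)}$, hence $\tau^{\otimes (n+1)} \circ \nabla^n F_N \to \tau^{\otimes (n+1)} \circ \nabla^n F$ in $L^2(\mathbb{R}_+^n)$. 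Applying Step~1 term-by-term to $F_N$ gives
\begin{equation*}
\tau^{\otimes (n+1)}(\nabla^n_{t_1,\ldots,t_n} F_N) = f_n(t_1,\ldots,t_n) \quad \text{for every } N \geq n,
\end{equation*}
so passing to the limit yields the Stroock formula almost everywhere. The symmetrized identity is then immediate from $D^n = n! \nabla^n$.

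The main obstacle is the combinatorial bookkeeping in Step~1: one must carefully check that the constraints forced by centering of multiple Wigner integrals pin down a unique tuple $(i_1,\ldots,i_n)$ and that the associated kernel $f_m^{1,2,\ldots,n}{}_{t_1,\ldots,t_n}$ collapses precisely to $f_n(t_1,\ldots,t_n)$ (this is where it is essential that the free case involves no symmetrization factor beyond the $\binom{n}{p}$ already counted, which is the reason the symmetrized derivative $D^n$ matches the classical Stroock formula exactly). Once this identification is in place, the rest of the argument is a routine closability/continuity exercise.
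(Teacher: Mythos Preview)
Your proof is correct and follows essentially the same route as the paper: reduce by linearity and closability of $\nabla^n$ to a single chaos $I_m(f_m)$, invoke Proposition~\ref{proposition9}, and observe that the centering of multiple Wigner integrals forces every block degree to vanish, pinning down the unique tuple $i_j=j$ and hence $m=n$. Your density argument (partial sums converging in the graph norm, continuity of $\tau^{\otimes(n+1)}$) is spelled out more explicitly than in the paper, but the combinatorial core and overall strategy are identical.
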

\begin{proof}
By linearity and density, since $\nabla^p, p\geq 1$ is a densely defined unbounded closable operator, it suffices to show the result for multiple Wigner-Ito integrals of any order $n\geq 1$:
\newline
By the proposition 6, 
\begin{equation}
 \nabla^p_{t_1,\ldots,t_p}\left(I_n(f)\right)=\sum_{1\leq i_1<\ldots <i_p\leq n}I_{i_1-1}\otimes I_{i_2-i_1-1}\otimes \ldots\otimes I_{n-i_p}(f_{t_1,\ldots,t_p}^{i_1,\ldots,i_n})
\end{equation}
then applying $\tau^{\otimes (p+1)}$ for all $p<n$, is is easily seen that there is at least for any $1\leq i_1<\ldots<i_p\leq n$, a non zero integer in the following sequence $i_1-1,i_2-i_1-1,\ldots,n-i_p$, thus since Wigner integral are centered, all the terms vanishes.
\newline
Now for $p>n$ the higher order Malliavin derivatives trivially vanishes as expected: $\nabla^p_{|
{\mathcal{P}_n}}=0$.
\newline
And for the only (non trivial) contributing case, that is $p=n$, it is straightforward to check that the only term is (for almost all $t_1,\ldots,t_n\geq 0$)
\begin{equation}
\nabla^n_{t_1,\ldots,t_n}(I_n(f))=f(t_1,\ldots,t_n).1^{\otimes{(n+1)}}
\end{equation}
and the result follows.
\qed
\end{proof}
As a straightforward application of our previous findings, we also have a generalized commutation relation between the Ornstein-Uhlenbeck semigroup and the higher-order Malliavin derivatives.
\begin{proposition}
Let $k$ a positive integer and $F\in \mathbb{D}^{k,2}$, then we have the following relation which holds true for every $t>0$:
\begin{equation}
    \nabla^k P_tF=e^{-kt}P_t^{\otimes (k+1)}\nabla^k F
\end{equation}
\end{proposition}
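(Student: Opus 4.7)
The plan is to reduce by density and closability to a single multiple Wigner integral $F = I_n(f)$ with $f \in L^2(\mathbb{R}_+^n)$ and $n \geq k$ (the case $n < k$ being trivial since $\nabla^k$ vanishes on $\mathcal{P}_{k-1}$), and then to read off the identity by explicit computation using Proposition \ref{proposition9}. Indeed, since $P_t I_n(f) = e^{-nt} I_n(f)$ and $\nabla^k$ is linear, the left-hand side equals
\begin{equation*}
\nabla^k_{t_1,\ldots,t_k}\!\bigl(e^{-nt}I_n(f)\bigr) = e^{-nt}\!\!\sum_{1\leq i_1<\cdots<i_k\leq n} I_{i_1-1}\otimes I_{i_2-i_1-1}\otimes\cdots\otimes I_{n-i_k}\bigl(f_{t_1,\ldots,t_k}^{i_1,\ldots,i_k}\bigr).
\end{equation*}

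The key observation is that for each index tuple $(i_1,\ldots,i_k)$, the orders of the $k+1$ tensor factors are $(i_1-1),\,(i_2-i_1-1),\,\ldots,\,(n-i_k)$, and these nonnegative integers sum to $n-k$. Since $P_t$ acts on a homogeneous Wigner chaos of order $m$ as multiplication by $e^{-mt}$, the tensorized semigroup $P_t^{\otimes(k+1)}$ multiplies each summand by $e^{-(n-k)t}$, independently of the choice of $(i_1,\ldots,i_k)$. Thus
\begin{equation*}
P_t^{\otimes(k+1)}\nabla^k_{t_1,\ldots,t_k}I_n(f) = e^{-(n-k)t}\,\nabla^k_{t_1,\ldots,t_k}I_n(f),
\end{equation*}
and multiplying by $e^{-kt}$ yields exactly the factor $e^{-nt}$ obtained above. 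This gives the identity on $\mathcal{H}_n$, and a standard closability argument extends it to all of $\mathbb{D}^{k,2}$.

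An essentially equivalent but more conceptual route is induction on $k$, taking the case $k=1$ from Proposition \ref{bismut} as base. Assuming $\nabla^{k-1} P_t = e^{-(k-1)t}P_t^{\otimes k}\nabla^{k-1}$, one applies $(\mathrm{id}^{\otimes(k-1)}\otimes\nabla_{t_k})$ and uses the base case in the last tensor slot, giving $(\mathrm{id}^{\otimes(k-1)}\otimes\nabla_{t_k})P_t^{\otimes k} = e^{-t}P_t^{\otimes(k+1)}(\mathrm{id}^{\otimes(k-1)}\otimes\nabla_{t_k})$; combining the two exponential prefactors yields $e^{-kt}$. I do not expect a serious obstacle: the only slightly delicate point is justifying the interchange of $\nabla^k$ with the chaotic sum for general $F \in \mathbb{D}^{k,2}$, which is handled by truncating the chaos expansion, applying the identity to each truncation, and passing to the limit using the closability of $\nabla^k$ together with the contractivity of $P_t^{\otimes(k+1)}$ on $\mathbb{M}_2^{k,k+1}$.
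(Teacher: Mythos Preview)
Your proposal is correct and takes essentially the same approach as the paper: the paper states this proposition as ``a straightforward application of our previous findings'' without giving a separate proof, and your argument is precisely the natural generalization of the explicit computation given for the case $k=1$ in Proposition~\ref{bismut}, using Proposition~\ref{proposition9} in place of Proposition~\ref{pp3} and the same telescoping-of-exponents observation $(i_1-1)+(i_2-i_1-1)+\cdots+(n-i_k)=n-k$.
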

\section{Variance formulas on the Wigner space}
This {\it free Stroock's formula} also gives a simple and free counterpart of variances expansion in terms of infinite series for infinitely smooth Wigner functionals. The interested reader might consult the book of Nourdin and Peccati \cite{NP-book} section 1.5 chapter 1 for analogous results in the classical case. In fact this formula
is closely linked with the free number operator $N:=-L$, the opposite of the free Ornstein-Uhlenbeck operator.
\bigbreak
We first begin with a variance formula involving the {\it free Ornstein-Uhlenbeck semigroup}:
\begin{lemma}
For all $F,G \in \mathbb{D}^{1,2}$,
\begin{equation}
cov(F,G):=\tau\bigg[(F-\tau(F))(G-\tau(G))\bigg]=\int_0^{\infty} e^{-t}\langle P_t^{\otimes 2}(\nabla F),\nabla G^*\rangle_{\mathcal{B}_2}dt
\end{equation}
\end{lemma}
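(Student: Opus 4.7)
My strategy is to verify the identity on the Wigner--Itô chaotic basis, where both the semigroup $P_t$ and the gradient $\nabla$ act diagonally. Writing $F = \sum_{n\geq 0} I_n(f_n)$ and $G = \sum_{m\geq 0} I_m(g_m)$, one has $F - \tau(F) = \sum_{n\geq 1} I_n(f_n)$ and $G - \tau(G) = \sum_{m\geq 1} I_m(g_m)$. Both sides of the claimed identity are continuous bilinear forms on $\mathbb{D}^{1,2}\times \mathbb{D}^{1,2}$, and by orthogonality of distinct Wigner chaoses the cross-terms $n\neq m$ vanish on both sides. It therefore suffices to check the identity when $F = I_n(f_n)$ and $G = I_n(g_n)$ for a single $n\geq 1$, and then resum.

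For the LHS, traciality and the Wigner--Itô isometry together with the identity $I_n(h) = I_n(h^*)^*$ give
\begin{equation*}
\tau(I_n(f_n)\,I_n(g_n)) = \langle g_n, f_n^*\rangle_{L^2(\mathbb{R}_+^n)} = \langle f_n, g_n^*\rangle_{L^2(\mathbb{R}_+^n)},
\end{equation*}
the second equality following from the change of variables $(t_1,\dots,t_n)\mapsto (t_n,\dots,t_1)$. For the RHS, Proposition \ref{bismut} combined with $P_tI_n(f_n) = e^{-nt}I_n(f_n)$ yields $P_t^{\otimes 2}\nabla I_n(f_n) = e^{-(n-1)t}\nabla I_n(f_n)$, so that
\begin{equation*}
\int_0^{\infty} e^{-t}\langle P_t^{\otimes 2}\nabla I_n(f_n), \nabla I_n(g_n^*)\rangle_{\mathcal{B}_2}\,dt = \frac{1}{n}\langle \nabla I_n(f_n), \nabla I_n(g_n^*)\rangle_{\mathcal{B}_2}.
\end{equation*}
Using Proposition \ref{pp3} to expand $\nabla_t I_n(f_n) = \sum_{k=1}^n I_{k-1}\otimes I_{n-k}(f_n^{k,t})$, and similarly for $g_n^*$, the orthogonality of $I_{k-1}\otimes I_{n-k}$ against $I_{k'-1}\otimes I_{n-k'}$ for $k\neq k'$ reduces the pairing to $\sum_{k=1}^n \int_0^\infty \langle f_n^{k,t}, (g_n^*)^{k,t}\rangle_{L^2(\mathbb{R}_+^{n-1})}\,dt$; by Fubini each summand equals $\langle f_n, g_n^*\rangle_{L^2(\mathbb{R}_+^n)}$, so the total is $n\langle f_n, g_n^*\rangle$, and the factor $n$ cancels the $1/n$.

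The only loose ends are routine: interchanging the $t$-integral with the sum over chaoses is justified by the contractivity of $P_t^{\otimes 2}$ on $\mathcal{B}_2$ and the summability $\sum_n n\|f_n\|^2_{L^2(\mathbb{R}_+^n)}<\infty$ and $\sum_n n\|g_n\|^2_{L^2(\mathbb{R}_+^n)}<\infty$ inherited from $F,G\in \mathbb{D}^{1,2}$; the identification of the $\mathcal{B}_2$ inner product with an $L^2(\mathbb{R}_+^{n-1})$ pairing of kernels (modulo the reversal built into the $\mathcal{SC}^{op}$-structure on the right leg) is a direct consequence of the tensorized Wigner--Itô isometry. The one subtle point worth highlighting is the interplay between the reversal involution $h\mapsto h^*$ and the appearance of $\nabla G^*$ rather than $\nabla G$ on the RHS: this is precisely what aligns the two sides without spurious conjugations, and it explains why the formula is most naturally stated with $\nabla G^*$.
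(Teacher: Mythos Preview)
Your proof is correct, but it follows a different route from the paper's. The paper gives the standard semigroup interpolation argument: set $h(t)=-\tau(P_tF\cdot G)$, observe $h(0)-\lim_{t\to\infty}h(t)=\mathrm{cov}(F,G)$ by ergodicity of $(P_t)_{t\geq 0}$, then write this difference as $\int_0^\infty -h'(t)\,dt=\int_0^\infty\tau(-L P_tF\cdot G)\,dt$, use $-L=\delta\nabla$ to convert into $\int_0^\infty\langle\nabla P_tF,\nabla G^*\rangle_{\mathcal{B}_2}\,dt$, and finally apply the commutation relation $\nabla P_t=e^{-t}P_t^{\otimes 2}\nabla$.

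You instead work spectrally: you diagonalize both sides on the Wigner chaos, reduce to $F=I_n(f_n)$, $G=I_n(g_n)$ by orthogonality, and match $\langle f_n,g_n^*\rangle$ against $\tfrac{1}{n}\cdot n\langle f_n,g_n^*\rangle$ using the explicit action of $\nabla$ and $P_t^{\otimes 2}$ on $\mathcal{H}_n$. This is more elementary in that it avoids invoking $L=-\delta\nabla$ and the generator calculus, relying only on the Wigner--It\^o isometry and the already-computed action of $\nabla$ on chaoses; it also makes the polarized identity $\langle\nabla I_n(f),\nabla I_n(g)\rangle_{\mathcal{B}_2}=n\langle f,g\rangle$ explicit. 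The paper's approach, on the other hand, is more conceptual and portable: it is the template that extends to general symmetric Markov semigroups and does not depend on having an explicit chaos decomposition. Your remark about the reversal involution and why $\nabla G^*$ (rather than $\nabla G$) appears is a nice observation that the interpolation proof leaves implicit.
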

\begin{proof}
Let's denote the (continuous) function $h(t):=-\tau(P_tFG)$, then $h(0)=\tau(FG)$ and by ergodicity of $(P_t)_{t\geq 0}$ which means that the subalgebra of {\it fixed-points}:
\begin{equation}
\mathcal{N}:=\left\{F \in \mathcal{SC}, P_tF=F, \mbox{for\:all}\:t\geq 0\right\}=\mathbb{C}.1
\end{equation}
is trivial (which is easy to prove), implies that $\underset{t\rightarrow \infty}{\lim} h(t)=-\tau(F)\tau(G)$.
\newline
Thus one has by the fundamental rule of calculus:
\begin{eqnarray}
cov(F,G)&=&\int_0^{\infty} -\frac{d}{dt}h(t)dt\nonumber\\
&=&\int_0^{\infty} \tau\bigg(-\frac{dP_tF}{dt}G\bigg)dt
\nonumber\\
&=&\int_0^{\infty}\tau(L(P_tF)G)dt\nonumber\\
&=& \int_0^{\infty} \langle \nabla P_tF,\nabla G^*\rangle_{\mathcal{B}_2}dt\nonumber\\
&=&\int_0^{\infty} e^{-t} \langle P_t^{\otimes 2}(\nabla F),\nabla G^*\rangle_{\mathcal{B}_2}dt
\end{eqnarray}
\end{proof}
\begin{proposition}
    Let $F\in \mathbb{D}^{\infty,2,\sigma}$, then:
    \begin{equation}
    var(F):=\lVert F-\tau(F)\rVert_2^2=\sum_{n=1}^{\infty}\lVert\tau^{\otimes (n+1)}(\nabla^nF)\rVert_{L^2(\mathbb{R}^n_+)}
    \end{equation}
where $\tau^{\otimes(n+1)}(\nabla^n F)$ is understand point-wise amost everywhere in the sense of the product Lebesgue measure.
\end{proposition}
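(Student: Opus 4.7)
The plan is to derive this variance formula as a direct corollary of the free Stroock formula (Theorem \ref{Stro}) combined with the Wigner--It\^o isometry and the orthogonality of the homogeneous Wigner chaoses in $L^2(\mathcal{SC})$. Since $F \in \mathbb{D}^{\infty,2,\sigma}$, in particular $F \in L^2(\mathcal{SC})$, so it admits a chaotic expansion $F = \sum_{n=0}^{\infty} I_n(f_n)$ with each $f_n \in L^2(\mathbb{R}_+^n)$, and $f_0 = \tau(F)$.

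First I would observe that, by orthogonality of the chaoses $\mathcal{H}_n$ for distinct $n$ and the Wigner--It\^o isometry recalled in Proposition \ref{pprodui}, we have
\begin{equation}
\mathrm{var}(F) = \|F - \tau(F)\|_2^2 = \sum_{n=1}^{\infty} \|I_n(f_n)\|_2^2 = \sum_{n=1}^{\infty} \|f_n\|_{L^2(\mathbb{R}_+^n)}^2.
\end{equation}
Next I would invoke the free Stroock formula (Theorem \ref{Stro}), which is applicable precisely because $F \in \mathbb{D}^{\infty,2,\sigma} \subset \mathbb{D}^{n,2}$ for every $n \geq 1$, ensuring that $\nabla^n F$ exists in $\mathbb{M}_2^{n,n+1}$ and that the kernels of the chaotic expansion are recovered pointwise a.e.\ by
\begin{equation}
f_n(t_1,\ldots,t_n) = \tau^{\otimes(n+1)}\bigl(\nabla^n_{t_1,\ldots,t_n} F\bigr).
\end{equation}
Substituting this identity into the expression for the variance yields the stated formula (with the understanding that the right-hand side should be read as a sum of squared $L^2(\mathbb{R}_+^n)$-norms, matching the Wigner--It\^o isometry).

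There is essentially no obstacle beyond checking that each $\tau^{\otimes(n+1)}(\nabla^n F)$ is indeed well-defined as an element of $L^2(\mathbb{R}_+^n)$, which follows from the chaotic characterization of $\mathbb{D}^{n,2,\sigma}$ proved just before Theorem \ref{Stro}: the iterated Malliavin derivative maps into $\mathbb{M}_2^{n,n+1}$ and partial traces are bounded operators, so the pointwise scalar field $t \mapsto \tau^{\otimes(n+1)}(\nabla^n_t F)$ is square-integrable over $\mathbb{R}_+^n$ and coincides almost everywhere with the chaotic kernel $f_n$. The only mild subtlety worth stating explicitly is that the interchange of $\tau^{\otimes(n+1)}$ with the pointwise evaluation in $t_1,\ldots,t_n$ is already built into the statement of Stroock's formula, so no additional Fubini-type argument is needed here.
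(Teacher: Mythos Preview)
Your proof is correct and follows exactly the approach indicated in the paper: the result is a direct consequence of the free Stroock formula combined with the Wigner--It\^o isometry and the orthogonality of the homogeneous chaoses. Your additional remark that the right-hand side should carry squared norms is also well taken, as the displayed formula in the statement is missing the exponent.
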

\begin{proof}
The statement is an easy consequence of the previous Stroock's formula combined with the Wigner-Ito isometry.
\qed
\end{proof}
\bigbreak
Note that these last formulas are not the only variances estimates in terms of free Malliavin operators, there is also the well known (as it is the free counterpart) of the integration by parts on the Wiener space involving $\nabla, L^{-1}$, the last operator being the pseudo-inverse of the free Ornstein-Uhlenbeck operator. Surprisingly, this last one doesn't seem to be an appropriate variance formula on the Wigner space especially when dealing with its combination with the free Stein's method. Cébron (lemma $3.9$ in \cite{C}) discovered a much more powerful formula which seems to traduce better the properties of this differential calculus. We first recall its formula, and we provide generalized versions which involve the higher order free Malliavin derivatives.

\begin{lemma}
For all $A,B \in \mathcal{P}_n$, such as $\tau(A)=0$ or $\tau(B)=0$:
\begin{equation}
\tau(AB)=\tau\left(\int_{\mathbb{R}_+} id\otimes\tau(\nabla_t A).(\tau\otimes id(\nabla_t B))dt\right),
\end{equation}
\end{lemma}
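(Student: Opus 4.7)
The plan is to reduce the identity to the level of single multiple Wigner integrals via bilinearity and then to match both sides using only the action of $\nabla_t$ on a fixed chaos (Proposition \ref{pp3}) together with the Wigner--Itô isometry / product formula. Expanding $A = \sum_{j=0}^{n} I_j(f_j)$ and $B = \sum_{k=0}^{n} I_k(g_k)$, both sides of the identity are bilinear in $(A,B)$, so it suffices to establish it for $A = I_j(f)$ and $B = I_k(g)$ for all pairs $0 \le j,k \le n$.

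First I would check that both sides vanish when $j \neq k$. The left-hand side vanishes by Wigner--Itô orthogonality. For the right-hand side, Proposition \ref{pp3} combined with the centering of positive-order Wigner integrals yields
\begin{equation*}
\mathrm{id}\otimes\tau(\nabla_t I_j(f)) = I_{j-1}\bigl(f(\cdot,t)\bigr), \qquad \tau\otimes\mathrm{id}(\nabla_t I_k(g)) = I_{k-1}\bigl(g(t,\cdot)\bigr),
\end{equation*}
since only the index $k=n$ (resp.\ $k=1$) in the sum of Proposition \ref{pp3} survives the trace on the right (resp.\ left) leg. Taking $\tau$ of the product of two Wigner integrals of distinct orders then yields zero by Proposition \ref{pprodui}.

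The case $j = k = 0$ is exactly what the hypothesis $\tau(A)\tau(B) = 0$ handles: both $A$ and $B$ are scalars with vanishing product of traces, so $\tau(AB)=0$, while the right-hand side is zero because $\nabla$ annihilates constants. For the remaining diagonal case $j=k\ge 1$, write $A = I_n(f)$, $B = I_n(g)$. Substituting the two formulas above into the right-hand side and invoking Proposition \ref{pprodui} once more (only the top contraction of order $n-1$ contributes to $\tau$) produces
\begin{equation*}
\int_{\mathbb{R}_+}\!\!\int_{\mathbb{R}_+^{n-1}} f(s_{n-1},\ldots,s_1,t)\,g(t,s_1,\ldots,s_{n-1})\, ds_1\cdots ds_{n-1}\, dt,
\end{equation*}
which, after renaming $s_n := t$ and reversing the first $n-1$ arguments, equals $f \stackrel{n}{\frown} g = \tau\bigl(I_n(f) I_n(g)\bigr)$, matching the left-hand side.

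The only real obstacle is the combinatorial bookkeeping between $\stackrel{n-1}{\frown}$, $\stackrel{n}{\frown}$ and the specific indices selected by $\nabla_t$; morally, $\mathrm{id}\otimes\tau\circ\nabla_t$ peels off the last argument of $f$ while $\tau\otimes\mathrm{id}\circ\nabla_t$ peels off the first argument of $g$, and these two slots get paired together under the cyclic trace, which is exactly what the full contraction $\stackrel{n}{\frown}$ encodes. Everything else is a routine but careful change of variables (and a complex conjugation in the non-real case).
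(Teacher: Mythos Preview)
Your proof is correct and follows essentially the same approach as the paper: the paper does not prove this lemma directly (it is attributed to C\'ebron) but immediately proves the generalized version for $\nabla^p$ by the identical strategy---reduce by bilinearity to homogeneous Wigner integrals, use Proposition~\ref{pp3} together with centering to isolate the single surviving term in $(\mathrm{id}\otimes\tau)\nabla_t$ and $(\tau\otimes\mathrm{id})\nabla_t$, and then match both sides via the Wigner--It\^o isometry and the full contraction $\stackrel{n}{\frown}$. The only cosmetic difference is that the paper carries out the computation on elementary tensors $f=f_1\otimes\cdots\otimes f_n$ rather than general kernels, which you avoid.
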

We can now state a much more interesting variance formula which can be seen as the generalization of the formula discovered by Cébron and which is linked to the higher order free Malliavin derivatives.
\begin{lemma}(Generalized Cébron formulas)
Let $b\geq a\geq p>1$ be positive integers, then for all $A,B \in \bigoplus_{k=a}^b\mathcal{H}_k$ :
\begin{equation}
\tau(AB)=\tau\left(\int_{\mathbb{R}^p_+} (id\otimes\tau^{\otimes p})(\nabla_{t_p,\ldots,t_1}^pA).(\tau^{\otimes p}\otimes id)(\nabla_{t_1,\ldots,t_p}^pB)dt_1\ldots dt_p\right),
\end{equation}
\end{lemma}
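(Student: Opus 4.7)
The plan is to reduce by bilinearity and closability of $\nabla^p$ to the case where $A = I_n(f_n)$ and $B = I_m(g_m)$ are pure homogeneous Wigner integrals with $a \le n, m \le b$ (so in particular $n, m \ge p$). By the Wigner-Ito isometry, $\tau(AB) = 0$ whenever $n \ne m$, so it will suffice to check the equality for $n = m$ and to verify that the right-hand side likewise vanishes when $n \ne m$.

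First I would apply Proposition \ref{proposition9} to expand $\nabla^p_{t_p,\ldots,t_1} A$ and $\nabla^p_{t_1,\ldots,t_p} B$ as sums over strictly increasing sequences $1 \le i_1 < \ldots < i_p \le n$ (resp.\ $\le m$) of tensor products of multiple Wigner integrals. Next, I apply the partial traces: since $\tau(I_k(h)) = 0$ for $k \ge 1$, the partial trace $id \otimes \tau^{\otimes p}$ applied to $\nabla^p_{t_p,\ldots,t_1} I_n(f_n)$ kills every term except the one in which each traced slot corresponds to an order-$0$ integral, which forces $i_k = n-p+k$ for $k=1,\ldots,p$. Similarly, $\tau^{\otimes p} \otimes id$ applied to $\nabla^p_{t_1,\ldots,t_p} I_m(g_m)$ forces $i_k = k$. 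The surviving terms are $I_{n-p}(\phi_{t_1,\ldots,t_p})$ and $I_{m-p}(\psi_{t_1,\ldots,t_p})$ with $\phi_{t_1,\ldots,t_p}(s) = f_n(s_1,\ldots,s_{n-p}, t_p, t_{p-1}, \ldots, t_1)$ and $\psi_{t_1,\ldots,t_p}(s) = g_m(t_1,\ldots,t_p, s_1,\ldots,s_{m-p})$. The order reversal in $\nabla^p_{t_p,\ldots,t_1}$ for $A$ is precisely what places the variables $t_p, \ldots, t_1$ into the last $p$ slots of $f_n$.

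Finally, I take the outer $\tau$ of the product of these two slices and invoke the Wigner-Ito isometry: this forces $n-p = m-p$, i.e.\ $n = m$ (confirming the orthogonal cases vanish on both sides), and yields an explicit integral of $\phi_{t_1,\ldots,t_p}(s_{n-p},\ldots,s_1)\,\psi_{t_1,\ldots,t_p}(s_1,\ldots,s_{n-p})$ over $s_1,\ldots,s_{n-p}$. Integrating next over $(t_1,\ldots,t_p)$ and performing the change of variables $u_k = s_{n-p+1-k}$ for $1 \le k \le n-p$ and $u_{n-p+k} = t_{p+1-k}$ for $1 \le k \le p$, the full expression collapses to $\int f_n(u_1,\ldots,u_n)\,g_n(u_n,\ldots,u_1)\,du_1\ldots du_n = \tau(I_n(f_n) I_n(g_n)) = \tau(AB)$.

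The main obstacle is the bookkeeping: one must verify that the asymmetric choice of subscripts ($t_p,\ldots,t_1$ for $A$ versus $t_1,\ldots,t_p$ for $B$) is the unique convention under which the change of variables aligns the integrand with the formula for $\tau(I_n(f_n) I_n(g_n))$. Conceptually the argument is a direct generalization of Cébron's $p=1$ formula, but each extra order of differentiation introduces a new permutation of integration variables that must be tracked carefully to confirm the double collapse (trace partial on the left for $A$, trace partial on the right for $B$) selects exactly the extremal indices $i_k = n-p+k$ and $i_k = k$ respectively.
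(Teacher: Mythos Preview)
Your proposal is correct and follows essentially the same approach as the paper: reduce to homogeneous integrals, use centeredness of Wigner integrals to collapse each partially-traced $\nabla^p$ to the single extremal-index term, then apply the Wigner--It\^o isometry and a change of variables to recover $f_n\stackrel{n}{\frown} g_n=\tau(AB)$. The only cosmetic difference is that the paper first passes to elementary tensors $f=f_1\otimes\cdots\otimes f_n$, $g=g_1\otimes\cdots\otimes g_m$ before computing, whereas you work directly with general kernels; the bookkeeping you flag regarding the reversed subscript order $(t_p,\ldots,t_1)$ for $A$ is handled identically in both.
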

\begin{proof}
First, it is important to notice that in the term $(id\otimes \tau^{\otimes p})(\nabla^p_{t_p,\ldots,t_1}$), that the variables $t_1,\ldots,t_p$ are taken in reverse order (the reader familiar with usual and free Malliavin calculus can notice that to be able to interchange the variables, it is necessary to assume the {\it fully-symmetry} of the multiple Wigner integrals considered).
\newline
By linearity of the free Malliavin derivatives, it is sufficient to show the proposition for elementary multiple Wigner integrals.
\newline
Moreover since elements of $\mathcal{H}_k, k\geq 1$ are centered (and it's not an hypothesis) we have that $\tau(A)=\tau(B)=0$.
\newline
Thus it suffice to consider for $a\leq n,m\leq b$ with $n,m\geq p$ and
\newline
$A=I_n(f),B=I_m(f)$ with $f=f_1\otimes\ldots\otimes f_n$ and $g=g_1\otimes\ldots\otimes g_m$ and each $f_i,g_j\in L^2_{\mathbb{R}}(\mathbb{R}_+)$ for $i=1,\ldots,n$ and $j=1,\ldots,m$.
\bigbreak
Then we have:
\begin{equation}
\nabla^p_{t_p,\ldots,t_1}(I_n(f))=\sum_{1\leq i_1<\ldots<i_p\leq n}f_{i_1}(t_p)\ldots f_{i_p}(t_1) I_{i_1-1}(f_1\otimes \ldots\otimes f_{i_1-1})\otimes \ldots\otimes I_{n-i_p}(f_{i_p+1}\otimes \ldots\otimes f_{n})
\end{equation}
Now applying $id\otimes \tau^{\otimes p}$, we are left with:
\begin{equation}
(id\otimes \tau^{\otimes p})(\nabla^p_{t_p,\ldots,t_1}A)=f_n(t_1)\ldots f_{n-p+1}(t_p)I_{n-p}(f_{1}\otimes \ldots \otimes f_{n-p})
\end{equation}
Indeed since Wigner integrals of order $k\geq 1$ are centered, all the terms vanishes expect when $i_p=n,i_{p-1}=n-1,\ldots, i_2-i_1-1=0$ which implies recursively that $i_1=n-p+1$.
\newline
Similarly, we have:
\begin{equation}
(\tau^{\otimes p}\otimes id)(\nabla^p_{t_1,\ldots,t_p}B)=g_1(t_1)\ldots g_p(t_p)I_{m-p}(g_{p+1}\otimes \ldots \otimes g_m)
\end{equation}
Now,
\begin{align*}
&\tau\left(\int_{\mathbb{R}^p_+}(id\otimes \tau^{\otimes p})(\nabla_{t_p,\ldots,t_1}^pA).(\tau^{\otimes p}\otimes id)(\nabla^p_{t_1,\ldots,t_p}B)dt_1\ldots dt_p\right)\nonumber\\
&=\int_{\mathbb{R}^p_+}f_{n}(t_1)\ldots f_{n-p+1}(t_p)g_1(t_1)\ldots g_p(t_p)\tau\left(I_{n-p}(f_1\otimes \ldots\otimes f_{n-p}).I_{m-p}(g_{p+1}\otimes\ldots \otimes g_m)\right)dt_1\ldots dt_p
\nonumber\\
&=\int_{\mathbb{R}^p_+}f_n(t_1)\ldots f_{n-p+1}(t_p)g_1(t_1)\ldots g_p(t_p) \delta_{n-p,m-p} (f_{1}\otimes \ldots\otimes f_{n-p})\stackrel{p}{\frown}(g_{p+1}\otimes\ldots\otimes g_m)dt_1\ldots dt_p\nonumber\\
&=\delta_{n,m}.f\stackrel{n}{\frown}g
\nonumber\\
&=\tau(AB)\nonumber\\
\end{align*}
where here we denote $\delta$ as the {\it delta Kronecker symbol}.
\qed
\end{proof}
\begin{remark}
This lemma will be particularly useful for its connection with the free-Malliavin Stein method, especially when dealing with higher-order derivatives of noncommutative polynomials, this we will be investigated in another forthcoming paper.
\end{remark}
\section{Product formula on the Wigner space}
We recalled in the first section, the product formula proved by Biane and Speicher in \cite{BS}, which provides the chaotic decomposition (linearization formula) of the the product of two multiple Wigner integrals. In this section, we will prove that this formula is as in the classical case (idea which first appeared in the work of \"Ust\"unel \cite{ust}) just a consequence of a kind of Leibniz formula (idea introduced by Voiculescu in discussion preceding proposition 4.5 in \cite{V}) and also satisfied by the free Malliavin gradient. This proof is thus more in spirit with its connection to Malliavin calculus.
\begin{flushleft}
Before giving the proof of this formula, we remind to the reader, that it is straightforward to check (see for example Voiculescu page 205 in \cite{V}) that:
\end{flushleft}
\begin{lemma}
For any $F,G$ in $\mathcal{\mathcal{SC}}_{alg}$, then, for almost all $t_1,\ldots,t_n\geq 0$:
\begin{equation}
    \nabla^n_{t_1,\ldots,t_n}(FG)=\sum_{k=0}^n (id^{\otimes k}\otimes m_1\otimes id^{\otimes (n-k)})(\nabla^{k}_{t_{1},\ldots,t_{n-k}}(F)\otimes \nabla^{n-k}_{t_{n-k+1},\ldots,t_{n}}(G))
\end{equation}
where by convention when $k\in\left\{0,n\right\}$, $\nabla^0(F)\otimes \nabla^n(G):=F.\nabla^n(G)$ and $\nabla^n(F)\otimes \nabla^0(G):=\nabla^n(F).G$ are respectively the multiplication of the first and last leg.
\end{lemma}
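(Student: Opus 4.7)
The plan is to prove this higher–order Leibniz formula by induction on $n$, using the defining derivation property of $\nabla$ together with the recursive definition
\[
\nabla^{n+1}_{t_1,\ldots,t_{n+1}} = (id^{\otimes n}\otimes \nabla_{t_{n+1}})\circ \nabla^n_{t_1,\ldots,t_n}.
\]
By linearity and the closability of $\nabla^n$ it is enough to establish the identity on $\mathcal{SC}_{alg}$. The base case $n=1$ is nothing but the derivation property $\nabla_{t}(FG)=(\nabla_tF)\cdot G+F\cdot(\nabla_tG)$, which can be rewritten as $(id\otimes m_1)(\nabla F\otimes G)+(m_1\otimes id)(F\otimes \nabla G)$, matching the two summands $k=1$ and $k=0$.

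Assuming the formula at order $n$, I would apply $id^{\otimes n}\otimes \nabla_{t_{n+1}}$ to each summand and track which tensor factor is hit. For a generic term with index $k<n$, writing $\nabla^kF=X_1\otimes\cdots\otimes X_{k+1}$ and $\nabla^{n-k}G=Y_1\otimes\cdots\otimes Y_{n-k+1}$, the image under the map $(id^{\otimes k}\otimes m_1\otimes id^{\otimes(n-k)})$ has the form $X_1\otimes\cdots\otimes X_kG_1\otimes Y_2\otimes\cdots\otimes Y_{n-k+1}$, and the last slot (on which $\nabla_{t_{n+1}}$ acts) is simply $Y_{n-k+1}$, which is untouched by the preceding $m_1$. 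Consequently one obtains the corresponding summand at order $n+1$ with the same index $k$, now involving $\nabla^{n-k+1}G$ instead of $\nabla^{n-k}G$.

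The only case where something really happens is $k=n$: here the last slot is the product $X_{n+1}G$, and the derivation property of $\nabla_{t_{n+1}}$ splits it into two contributions, $(\nabla_{t_{n+1}}X_{n+1})\cdot G$ and $X_{n+1}\cdot \nabla_{t_{n+1}}G$. Identifying these as an $m_1$ applied between factor $(n+1)$ and $(n+2)$, respectively $(n+2)$ and $(n+3)$, of $\nabla^{n+1}F\otimes G$ and $\nabla^nF\otimes \nabla G$, one recovers exactly the summands $k=n+1$ and $k=n$ of the order–$(n+1)$ expansion. Combining the $n$ surviving lower terms with these two new contributions gives all $n+2$ terms of the order–$(n+1)$ formula.

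The only real obstacle is notational bookkeeping: one must verify carefully that the position of $m_1$ in $id^{\otimes k}\otimes m_1\otimes id^{\otimes(n-k)}$ matches the interface between $\nabla^kF$ and $\nabla^{n-k}G$ under the bimodule action, and that passing from $\nabla^n$ to $\nabla^{n+1}$ through $id^{\otimes n}\otimes \nabla_{t_{n+1}}$ is legitimate thanks to the almost–everywhere coassociativity established in Proposition \ref{coas}. Once this is set up, the induction is purely mechanical and no analytic estimate is required beyond the closability of $\nabla^n$ used to pass from $\mathcal{SC}_{alg}$ to the full domains.
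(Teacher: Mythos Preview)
Your argument is correct and follows essentially the same route as the paper: iterated application of the derivation rule, which the paper carries out explicitly for $n=1,2,3$ and then declares that ``applying the same scheme for higher order yields the result.'' Your presentation as a formal induction, with the key observation that for $k<n$ the last tensor leg is untouched by $m_1$ while for $k=n$ the product rule splits the last leg into the two new summands $k=n$ and $k=n+1$, is exactly the mechanism behind the paper's low-order computations; note only that the appeal to coassociativity is not actually needed here, since the recursive definition $\nabla^{n+1}=(id^{\otimes n}\otimes\nabla_{t_{n+1}})\circ\nabla^n$ alone suffices.
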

\begin{proof}
Thus we fix $F,G\in \mathcal{\mathcal{SC}}_{alg}$. We first explain how it works on the first few orders, $n=1,2,3$. The general result being just an iterative application of the Malliavin gradient.
\newline
Indeed, for $n=1$ it is clear, since it's just reduces to the Leibniz rule of the free Malliavin derivative onto $\mathcal{\mathcal{SC}}_{alg}$.
\begin{equation}
\nabla_{t_1}(FG)=\nabla_{t_1}(F).G+\nabla_{t_1}(G).F\nonumber
\end{equation}
Which gives at the second order $n=2$, for $t_1,t_2\geq 0$,
\begin{eqnarray}
\nabla_{t_1,t_2}^2(FG)&:=&(id\otimes \nabla_{t_2})(\nabla_{t_1}(F).G+F.\nabla_{t_1}(G))\nonumber\\
&=&\nabla_{t_1,t_2}^2(F).G+(id\otimes m_1\otimes id)(\nabla_{t_1}(F)\otimes \nabla_{t_2}(G))+F.\nabla_{t_1,t_2}^2(G)
\end{eqnarray}
Then at the third order, we have:
\begin{eqnarray}
\nabla_{t_1,t_2,t_3}^3(FG)&:=&(id \otimes \nabla_{t_3})(\nabla_{t_1,t_2}^2(F).G+(id\otimes m_1\otimes id)(\nabla_{t_1}(F)\otimes \nabla_{t_2}(G))+F.\nabla_{t_1,t_2}^2(G))\nonumber\\
&=&\nabla_{t_1,t_2,t_3}^3(F).G+(id^{\otimes 2}\otimes m_1\otimes id)(\nabla_{t_1,t_2}^2(F)\otimes \nabla_{t_3}(G))\nonumber\\
&+&(id\otimes m_1\otimes id^2)(\nabla_{t_1}(F)\otimes \nabla_{t_2,t_3}^2(G))+F.\nabla_{t_1,t_2,t_3}^2(G)\nonumber
\end{eqnarray}
Applying the same scheme for higher order yields the result.
\end{proof}
\qedhere
\bigbreak
We can now get back to proof of \ref{pprodui}.
\begin{proof}(proposition \ref{pprodui})
First remark that for all $n,m\geq 0$, $I_n(f),I_n(g)\in \mathbb{D}^{\infty,4,\sigma}$, thus $I_n(f)I_m(g)\in \mathbb{D}^{\infty,2,\sigma}$.
\bigbreak
For sake of clarity and because it suffices to consider $f=f_1\otimes \ldots \otimes f_n$ and $g=g_1\otimes \ldots \otimes g_m$ elementary tensors with $f_i,g_j\in L^2_{\mathbb{R}}(\mathbb{R}_+)$ for all $1\leq i\leq n,1\leq j\leq m$, we only give details about this case, the general result follows by approximation.
\newline
Then, we know from the {\it free Stroock's formula}, that the chaotic decomposition is given by:
\begin{equation}\label{149}
I_n(f)I_m(g)=\sum_{p=0}^{\infty} I_p\left(\tau^{\otimes (p+1)}[\nabla^p(I_n(f)I_m(g))]\right)
\end{equation}
Now, we know by the previous lemma that:
\begin{equation}\label{pp}
\nabla^p(I_n(f)I_m(g))=\sum_{k=0}^p (id^{\otimes k}\otimes m_1\otimes id^{\otimes (p-k)})(\nabla^{k}(I_n(f))\otimes \nabla^{p-k}(I_m(g))
\end{equation}
and thus by using the explicit action of $\nabla$ onto Wigner chaos, we get (note the conventions used in remark \ref{3}):
\begin{align*}\label{pp}
&\nabla^p(I_n(f)I_m(g))\nonumber\\
&=\sum_{k=0\vee(p-m)}^{p\wedge n} \sum_{\substack{1\leq i_1<\ldots <i_k\leq n\\
1\leq j_1<\ldots <j_{p-k}\leq m\\}}(id^{\otimes k}\otimes m_1\otimes id^{\otimes (p-k)})(I_n^{i_1,\ldots,i_{k}}(f)\otimes I_m^{j_1,\ldots,j_{p-k}}(g))\nonumber
\end{align*}
Since Wigner-Ito integrals are centered, by applying $\tau^{\otimes (p+1)}$, we must have to get non-zero terms:
\newline
$i_1-1=\ldots = i_{k}-i_{k-1}-1=j_2-j_1-1\ldots =j_{p-k}-j_{p-k-1}-1=m-j_{p-k}=0$, which thus gives recursively that:
\newline
$i_1=1,\ldots, i_{k-1}=k-1,i_k=k$ and $j_1=m+k-p+1,\ldots,j_{p-k-1}=m-1, j_{p-k}=m$.
\bigbreak
One can see that the "{\it central term}" (the one on which the operator $m_1$ applies) being thus given by the trace of the product of the multiple integrals $I_{n-k}(f_{k+1}\otimes \ldots \otimes f_{n})$ and $I_{m+k-p}(g_1\otimes \ldots \otimes g_{m+k-p})$,
is non zero by the Wigner-Ito isometry if and if only $k=(n-m+p)/2$, thus $k$ and $n+m$ have the same parity and $\lvert n-m\rvert\leq p\leq n+m$.
\newline
Thus there is only one non zero term in the equation \ref{pp} given by $k=(n-m+p)/2$, which gives $n-k=(n+m-p)/2$ and $m+k-p=(n+m-p)/2$ the associated kernel being thus given by:
\begin{align*}
& \tau\bigg(I_{(n+m-p)/2}(f_{(n-m-p+2)/2}\otimes \ldots \otimes f_{n})I_{(n+m-p)/2}(g_1\otimes \ldots \otimes g_{(n+m-p)/2})\bigg).f_1\otimes \ldots\otimes f_{(n-m+p)/2}\nonumber\\ &\otimes g_{(n+m-p+2)/2}\otimes \ldots\otimes g_n\nonumber\\
&=\langle g_1\otimes \ldots\otimes g_{(n+m-p)/2},f_n\otimes \ldots \otimes f_{(n-m+p+2)/2}\rangle_{L^2(\mathbb{R}_+^{(n+m-p)/2})}. f_1\otimes \ldots\otimes f_{(n-m+p)/2} \nonumber\\
&\otimes g_{(n+m-p+2)/2}\otimes \ldots\otimes g_n\nonumber\\
&:=f\stackrel{(n+m-p)/2}{\frown} g
\end{align*}
\bigbreak
Using this fact into the equation \ref{149}, we get by setting $r=(n+m-p)/2$ :
\begin{eqnarray}
I_n(f)I_m(g)&=&\sum_{p=\lvert n-m\lvert}^{n+m} I_p(f\stackrel{(n+m-p)/2}{\frown} g)\nonumber\\
&=&\sum_{r=0}^{n\wedge m} I_{n+m-2r}(f\stackrel{r}{\frown} g)
\end{eqnarray}
which concludes the proof.
\qed
\end{proof}
\begin{remark}
Another way of proving the product formula is to use the original approach of \"Ust\"unel in \cite{Us}, the previous Leibniz formula, the fact that
\newline
$\delta^p(f.1^{\otimes (p+1)})=I_p(f)$ for $f\in L^2(\mathbb{R}^p_+)$ and finally that the adjoint of the free Malliavin derivative $\nabla^p$ is $\delta^p$. It is however a less obvious and clear proof due to the heavy notations. We leave the details to the interested reader.
\newline
Indeed, without loss of generality, we can assume that $n<m$. 
By computing for $\phi \in \mathbb{D}^{\infty,\sigma}$, the following quantity:
\begin{equation}
\tau(I_n(f)I_m(g)\phi^*),
\end{equation}
and prove that it is equal to:
\begin{equation}
\sum_{r=0}^n \tau( I_{n+m-2r}(f\stackrel{r}{\frown} g) \phi^*),
\end{equation} gives the desired conclusion.
\end{remark}

\end{document}